\numberwithin{equation}{section}
\newtheorem{theorem}{Theorem}[section]
\newtheorem{proposition}{Proposition}[section]
\theoremstyle{definition}
\newtheorem{definition}{Definition}[section]
\newtheorem{remark}{Remark}[section]
\theoremstyle{remark}
\date{}
\begin{document}

\title{De Rahm decomposition theorem for strongly convex K\"ahler-Berwald manifolds}
\author{Chunping Zhong\\
School of Mathematical Sciences, Xiamen
University, Xiamen 361005, China\\
E-mail: zcp@xmu.edu.cn
}

\date{}
\maketitle

\begin{abstract}
Let $B_n$ and $P_n$ be the unit ball and the unit polydisk in $\mathbb{C}^n$ with $n\geq 2$ respectively. Denote $\mbox{Aut}(B_n)$ and $\mbox{Aut}(P_n)$ the
holomorphic automorphism group of $B_n$ and $P_n$ respectively. In this paper, we prove that $B_n$ admits no $\mbox{Aut}(B_n)$-invariant
strongly pseudoconvex complex Finsler metric other than a constant multiple of the Poincar$\acute{\mbox{e}}$-Bergman metric,
while $P_n$ admits infinite many $\mbox{Aut}(P_n)$-invariant complete strongly convex complex Finsler metrics
other than the Bergman metric.
The $\mbox{Aut}(P_n)$-invariant complex Finsler metrics are explicitly constructed which depend on a real parameter $t\in [0,+\infty)$ and
integer $k\geq 2$. These metrics are proved to be strongly convex K\"ahler-Berwald metrics, and they posses very similar properties as that of the Bergman metric on $P_n$. As applications, the existence of $\mbox{Aut}(M)$-invariant strongly convex complex Finsler metrics is also investigated on some Siegel domains of the first and the second kind which are biholomorphic equivalently to the unit polydisc in $\mathbb{C}^n$. We also give a characterization of strongly convex K\"ahler-Berwald spaces and give a de Rahm type decomposition theorem for strongly convex K\"ahler-Berwald spaces.
\end{abstract}
\textbf{Keywords:} Holomorphic invariant metric; complex Finsler metric; K\"ahler-Berwald space; de Rham decomposition.\\
\textbf{MSC(2020):} 53C60, 32Q99.\\

\section{Introduction and main results}

Let $M$ be a complex manifold and $T^{1,0}M$ its holomorphic tangent bundle. An upper semicontinuous function $F:T^{1,0}M\rightarrow [0,+\infty)$ is called a complex Finsler metric if it satisfies
$$F(p,\lambda v)=|\lambda|F(p,v),\quad \forall p\in M, \forall v\in T_p^{1,0}M,\forall \lambda\in\mathbb{C}.$$
An intrinsic metric is a biholomorphic invariant complex Finsler metric which is determined by the complex analytic structure on $M$.  A complex manifold $M$ endowed with a strongly pseudoconvex complex Finsler metric $F$ is called a complex Finsler manifold, denoted by $(M,F)$.

  Complex Finsler metrics arise naturally in geometric function theory of several complex variables since most well-known intrinsic metrics (such as the Carath$\acute{\mbox{e}}$odory, Kobayashi and Bergman metrics)
on complex manifolds are complex Finsler metrics in nature. These metrics play a very important role in geometric function theory of several complex variables \cite{Ko}.
  %Complex Finsler metrics also proved to be useful in proving metric rigidity theorems of holomorphic mappings. In \cite{Mok1}, Mok established in the compact case a Hermitian metric rigidity theorem for continuous complex Finsler metrics. In \cite{Mok2}, Mok studied a specifically the case when the continuous complex Finsler metric arises from the Carath$\acute{\mbox{e}}$odory pseumetric and established a Finsler metric rigidity theorem. Using the Finsler metric rigidity and a study of  the Carath$\acute{\mbox{e}}$odory extremal functions, Mok \cite{Mok2} prove that any nontrivial holomorphic mapping $f$ of $X$ into a complex manifold uniformized by a bounded domain is necessary a holomorphic immersion.

In \cite{Aikou1}, Aikou introduced the notion of complex Berwald manifold which is a class of special complex Finsler manifolds. Complex Berwald manifolds are important and interesting because every complex Berwald metric is affinely equivalent to a Hermitian metric.

In the same paper Aikou also constructed complex Berwald metrics on $M$ by using a Hermitian metric $\pmb{\alpha}=\sqrt{a_{i\overline{j}}(z)v^i\overline{v^j}}$ and $1$-form $\pmb{\beta}=b_i(z)v^i$ of type $(1,0)$ on $M$ and proved that if the $1$-form $\pmb{\beta}$ is holomorphic and parallel with respect to $\pmb{\alpha}$, then the obtained metric $F(\pmb{\alpha},\pmb{\beta})$ is a complex Berwald metric, and if furthermore $\pmb{\alpha}$ is a K\"ahler metric then $F(\pmb{\alpha},\pmb{\beta})$ is a K\"ahler-Berwald metric.
Furthermore, Aikou proved that for each complex Berwald manifold (resp. K\"ahler-Berwald manifold) $(M,F)$, there exists a Hermitian metric (resp. K\"ahler metric) $h_M$ on $M$ such that the horizontal Chern-Finsler connection (cf. \cite{ap}, also called complex Rund connection in \cite{Aikou1}, or Finsler connection in \cite{Aikou2}) associated to $F$ coincides with the pull-back of the Hermitian connection associated to $h_M$. Therefore a K\"ahler-Berwald manifold is necessary a K\"ahler manifold.

 Aikou's construction of complex Berwald metrics impose some restrictions on $M$, i.e., the existence of holomorphic and parallel $1$-form $\pmb{\beta}$ on $M$. On the other hand, in the study of $U(n)$-invariant strongly pseudoconvex complex Finsler metrics on domains $D\subset\mathbb{C}^n$, the author \cite{Zh2} proved that there actually exists no $U(n)$-invariant complex Berwald metric other than those $U(n)$-invariant Hermitian metrics on $D$.

\textbf{Question 1.} Under what condition does a complex manifold $M$ admit a complex Berwald metric $F$ which is not a Hermitian quadratic metric?
Under what condition does a K\"ahler manifold $M$ admit a K\"ahler-Berwald metric $F$ which is not a Hermitian quadratic metric?

In this paper, we shall construct complex Berwald metrics on reducible Hermitian manifolds. As an important application we show that there exists infinite many $\mbox{Aut}(P_n)$-invariant strongly convex K\"ahler-Berwald metrics on the unit polydisk $P_n$ in $\mathbb{C}^n$ with $n\geq 2$, while there exists no $\mbox{Aut}(B_n)$-invariant strongly pseudoconvex complex Finsler metric other than a constant multiple of the Poincar$\acute{\mbox{e}}$-Bergman metric on the  unit ball $B_n$ in $\mathbb{C}^n$. We are able to give a characterization of strongly convex K\"ahler-Berwald spaces and give a de Rham type decomposition theorem for strongly convex K\"ahler-Berwald spaces.

The following theorem gives an answer to \textbf{Question 1}.

\begin{theorem}\label{mth-1}
Let $(M,Q)$ be a simply connected complete reducible $C^\infty$ Hermitian manifold (resp. K\"ahler manifold) such that $(M_1,Q_1)\times \cdots\times (M_n,Q_n)$ is the de Rham decomposition of $(M,Q)$ and $\pmb{\pi}_l:M\rightarrow M_l$ are the natural projections of $M$ onto the $l$-th factors $M_l$ for $l=1,\cdots,n$. Then

(1) $M$ admits infinite many strongly convex complete complex Berwald metrics (resp. K\"ahler-Berwald metrics)
\begin{equation}
F_{t,k}(z,v)=\frac{1}{\sqrt{1+t}}\sqrt{\sum_{l=1}^nQ_l(\pmb{\pi}_l(z),(\pmb{\pi}_l)_\ast(v))+t\sqrt[k]{\sum_{l=1}^nQ_l^k(\pmb{\pi}_l(z),(\pmb{\pi}_l)_\ast(v))}},\quad\forall (z,v)\in T^{1,0}M,\label{ftk}
\end{equation}
where $t\in[0,+\infty)$ and $k\geq 2$ is an integer;

(2) $F_{t,k}$ is a complete real Berwald metric on $M$, thus $F_{t,k}$ has vanishing $\textbf{S}$-curvature for any $t\in[0,+\infty)$ and integer $k\geq 2$.

\end{theorem}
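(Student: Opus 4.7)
The plan is to exploit the product structure supplied by the de Rham decomposition. Around each point of $M$, choose local holomorphic coordinates $(z_1,\ldots,z_n)$ adapted to the factors, with $z_l$ coordinates on $M_l$ and $v_l$ the corresponding fiber coordinates. In these coordinates $Q(z,v)=\sum_{l=1}^{n}Q_l(z_l,v_l)$ with each $Q_l=(g_l)_{\alpha\bar{\beta}}(z_l)\,v_l^{\alpha}\bar{v}_l^{\beta}$ Hermitian only in the block-$l$ variables, and the Chern connection $\nabla^{Q}$ of $Q$ decouples as the direct sum of the Chern connections of the individual $g_l$. Writing $G_l(z,v)=Q_l(\pi_l(z),(\pi_l)_{*}v)$, the candidate metric becomes $F_{t,k}^{2}=f(G_1,\ldots,G_n)$ with $f(a_1,\ldots,a_n)=\frac{1}{1+t}\bigl(\sum_{l}a_l+t(\sum_{l}a_l^{k})^{1/k}\bigr)$, which is positively $1$-homogeneous and $C^{\infty}$ on $\mathbb{R}^{n}_{\ge 0}\setminus\{0\}$ for every integer $k\ge 2$. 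Hence $F_{t,k}$ is absolutely $\mathbb{C}$-homogeneous in $v$ and smooth on $T^{1,0}M\setminus\{0\}$.

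Strong convexity would be checked by a direct computation of the fundamental tensor $g_{i\bar{j}}:=\partial_{v^{i}}\partial_{\bar{v}^{j}}F_{t,k}^{2}$. In block form it takes the shape
\begin{equation*}
g_{i_{m}\bar{j}_{l}}=\delta_{ml}\,f_{l}\,(g_{l})_{i\bar{j}}(z_{l})+f_{lm}\,\bigl[(g_{m})_{i\bar{\gamma}}\bar{v}_{m}^{\gamma}\bigr]\bigl[(g_{l})_{\delta\bar{j}}v_{l}^{\delta}\bigr],
\end{equation*}
where $f_{l}:=\partial_{a_{l}}f$ and $f_{lm}:=\partial_{a_{l}}\partial_{a_{m}}f$ are evaluated at $(G_1,\ldots,G_n)$. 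A short computation gives $f_{l}\in[1/(1+t),\,1]$, in particular $f_{l}>0$, and the matrix $(f_{lm})$ is positive semidefinite as a consequence of the Cauchy--Schwarz inequality $|\sum_{l}a_{l}^{k-1}w^{l}|^{2}\le(\sum_{l}a_{l}^{k})(\sum_{l}a_{l}^{k-2}|w^{l}|^{2})$. Positive-definiteness of $(g_{i\bar{j}})$, i.e. strong pseudoconvexity, is then immediate, and strong convexity in the real sense follows by applying the same structural argument to the real Hessian of $F_{t,k}^{2}$.

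The core step is to identify the Chern--Finsler connection of $F_{t,k}$ with the pullback of $\nabla^{Q}$. For this I will verify that the horizontal operator $\delta^{Q}_{j}:=\partial_{z^{j}}-\Gamma^{\alpha}_{j\beta}(z)\,v^{\beta}\,\partial_{v^{\alpha}}$, built from the Chern connection coefficients $\Gamma^{\alpha}_{j\beta}$ of $Q$, annihilates every $(F^{2})_{\bar{k}}:=\partial_{\bar{v}^{k}}F_{t,k}^{2}$. Since $(F^{2})_{\bar{k}_{l}}=f_{l}\,(g_{l})_{\alpha\bar{k}}(z_{l})\,v_{l}^{\alpha}$, applying $\delta^{Q}_{j}$ produces one term from $\partial_{z^{j}}f_{l}$ (which sees only the block containing $j$, say block $p$, via $f_{lp}\,\partial_{z^{j}}G_{p}$) and one term from $\partial_{z^{j}}(g_{l})_{\alpha\bar{k}}$ (nonzero only if $l=p$). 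The contribution of $\Gamma^{\alpha}_{j\beta}v^{\beta}\partial_{v^{\alpha}}$ splits analogously, and in each block the cancellation is effected by the Chern identity $\Gamma^{\alpha}_{j\beta}\,(g)_{\alpha\bar{\gamma}}=\partial_{z^{j}}(g)_{\beta\bar{\gamma}}$ together with the fact that $\nabla^{Q}$ decouples between different blocks. Consequently the Chern--Finsler connection of $F_{t,k}$ has coefficients depending on $z$ alone; this is the (complex) Berwald condition, and becomes the K\"ahler--Berwald condition when $Q$ is K\"ahler.

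The remaining claims follow quickly. The two-sided estimate $\frac{1}{\sqrt{1+t}}\sqrt{Q}\le F_{t,k}\le\sqrt{Q}$, whose upper bound is the $\ell^{k}$-triangle inequality $(\sum_{l}a_{l}^{k})^{1/k}\le\sum_{l}a_{l}$, shows that the Finsler distance of $F_{t,k}$ is bi-Lipschitz equivalent to the distance of $Q$, so completeness transfers from $(M,Q)$ to $(M,F_{t,k})$. The real Berwald property is automatic once the Chern--Finsler connection has been identified with the pullback of a Hermitian (hence real-affine) connection on $M$, and for real Berwald metrics the general Shen formula forces the $\mathbf{S}$-curvature to vanish identically. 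The step I expect to demand the most care is the cross-block bookkeeping in verifying $\delta^{Q}_{j}(F^{2})_{\bar{k}}=0$; once that cancellation is pinned down, the rest of the theorem is essentially formal.
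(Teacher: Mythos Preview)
Your route to the complex Berwald property is correct and more conceptual than the paper's. The paper works by brute force: it writes the complex fundamental tensor as a block-diagonal matrix plus a rank-one perturbation, inverts it explicitly via a Sherman--Morrison-type identity (Lemma~6.1 in \cite{Zh1}), and then multiplies out $G^{\bar\beta\gamma}\,\partial_{z^{\alpha}}\partial_{\bar v^{\beta}}G$, watching a page of cross-block terms cancel to leave exactly the Hermitian connection coefficients $\hat\Gamma^{\gamma}_{;\alpha}$ of $Q$. Your criterion $\delta^{Q}_{j}(F^{2})_{\bar k}=0$ bypasses the inversion entirely: once one checks $\delta^{Q}_{j}Q_{l}=0$ for every $l$ (immediate from the Chern identity and the block decoupling), the annihilation of $(F^{2})_{\bar k}=f_{l}\,(g_{l})_{\alpha\bar k}v_{l}^{\alpha}$ is two lines. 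The payoff of the paper's longer computation is the explicit inverse fundamental tensor, which it reuses in the holomorphic sectional curvature estimates of Theorem~\ref{mthb}; your argument trades that for brevity. Your strong-convexity sketch ($f_{l}>0$ and $(f_{lm})$ positive semidefinite by Cauchy--Schwarz) and your bi-Lipschitz completeness argument are also correct and agree with the paper in spirit.

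There is, however, a genuine gap in your treatment of part~(2). Identifying the Chern--Finsler connection with the pullback of the Chern connection of $Q$ gives complex Berwald, but it does \emph{not} by itself give real Berwald: the real geodesic spray is tied to the Levi--Civita connection of $Q$, and this differs from the Chern connection when $Q$ is merely Hermitian and not K\"ahler. The paper closes this by a second, equally lengthy explicit computation of the real spray coefficients $\pmb{G}^{b_{s}}$, showing they collapse to $\breve{\varGamma}^{b_s}_{c_s;a_s}(x_s)u_s^{c_s}u_s^{a_s}$ on each factor. A shorter fix in your style: the Levi--Civita parallel transport of $Q$ is block-diagonal and preserves each $Q_{l}$ separately, hence preserves $F_{t,k}^{2}=f(Q_{1},\dots,Q_{n})$; a Finsler metric admitting a torsion-free linear connection whose parallel transport is norm-preserving is real Berwald. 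In the K\"ahler case you may alternatively invoke \cite{Zh1}, where it is shown that for strongly convex weakly K\"ahler--Finsler metrics complex Berwald and real Berwald are equivalent; but since the theorem is stated for general Hermitian $Q$ as well, some argument on the real side is genuinely required.
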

\begin{remark}
(1) For $t=0$, $F_{0}:=\sqrt{Q_1+\cdots+Q_n}$ is the usual product metric, which is $C^\infty$ over the whole holomorphic tangent bundle $T^{1,0}M$; for each $t\in(0,+\infty)$ and each fixed integer $k\geq 2$, however, $F_{t,k}$ is a non-Hermitian quadratic complex Finsler metric on $M$ which is only $C^\infty$ on $\widetilde{M}=T^{1,0}M\setminus\{\mbox{zero section}\}$. (2) If we consider $t$ as the parameter of time, $F_{t,k}$ can be considered as a deformation of $F_0$, and indeed $F_{t,k}$ posses some properties which are very similar to the usual product metric. (3) For any $t_1,t_2\in[0,+\infty)$ with $t_1\neq t_2$, and integers $k_1,k_2\geq 2$ with $k_1\neq k_2$, the complex Finsler manifolds $(M,F_{t_1,k_1})$ and $(M,F_{t_2,k_2})$ are not holomorphic isometrically equivalent.
\end{remark}

   Comparing with the Bergman metric on a complex manifold $M$, it is well-known that the Carath$\acute{\mbox{e}}$odory and Kobayashi metrics on $M$ are in general not smooth complex Finsler metrics (in the sense of \cite{ap}) and even not Hermitian quadratic. In \cite{Lempert}, Lempert proved a fundamental theorem which states that on bounded strongly convex domains with smooth boundaries in $\mathbb{C}^n$, the Kobayashi and Caratheodory metrics coincide and they are strongly pseudoconvex weakly K\"ahler-Finsler metrics with constant holomorphic sectional curvature $-4$ (cf. \cite{ap}). Since then several important works were carried out to give various characterizations of the Kobayashi metrics on complex manifolds from the viewpoint of complex Finsler geometry (cf. \cite{ap} and the references therein).
    Unless in some very special cases, however, these metrics cannot be explicitly expressed.
  In \cite{Zh2}, the author studied $U(n)$-invariant strongly pseudoconvex complex Finsler metrics on domains in $\mathbb{C}^n$ and gave some characterizations of strongly pseudoconvex $U(n)$-invariant complex Finsler metrics. Our studies show that there is no $U(n)$-invariant non-Hermitian quadratic K\"ahler-Finsler metric on a $U(n)$-invariant domain in $\mathbb{C}^n$. But there are indeed lots of weakly complex Berwald metrics in the sense of \cite{Zh1} which are $U(n)$-invariant strongly pseudoconvex (even strongly convex) and non-Hermitian quadratic.

Usually, in dealing with problems in geometric function theory of several complex variables, it is more desirable for us to construct and use holomorphic invariant metrics.
 Denote $\mbox{Aut}(M)$ the holomorphic automorphism group of a complex manifold $M$. It is known that the Bergman metric on $M$ is $\mbox{Aut}(M)$-invariant and Hermitian quadratic. So far to my knowledge, there was not any explicit example of complex Finsler metric on a complex manifold $M$ in literatures which is simultaneously $\mbox{Aut}(M)$-invariant, strongly pseudoconvex and non-Hermitian quadratic. In order to study complex Finsler geometry and investigate its possible applications in several complex variables, we need more examples of $\mbox{Aut}(M)$-invariant strongly pseudoconvex complex Finsler metrics-both Hermitian quadratic (such as the Bergman metric) and non-Hermitian quadratic (to be found yet!) (cf. \cite{AAP}). Thus it is very natural and interesting to ask the following question.

%As was pointed out in \cite{AAP},"the lack of consideration of explicit examples made the choice of the 'right' notions in the complex setting difficult and sometimes rather artificial......, the lack of examples raise the doubt that perhaps metrics satisfying such strong conditions occur very infrequently."
\textbf{Question 2.} Under what condition does a complex manifold $M$ admits an $\mbox{Aut}(M)$-invariant complex Finsler metric which is strongly pseudoconvex, non-Hermitian quadratic?

If there are such metrics, one may want to know that among such metrics whether there are K\"ahler-Finsler metrics or weakly K\"ahler-Finsler metrics in the sense of \cite{ap}. By Lempert's fundamental results in \cite{Lempert}, the Carath$\acute{\mbox{e}}$odory and Kobayashi metrics on bounded strongly convex domains with smooth boundaries in $\mathbb{C}^n$ are strongly pseudoconvex weakly K\"ahler-Finsler metrics with constant holomorphic sectional curvature $-4$, it is not known yet whether they are K\"ahler-Berwald metrics.

The following theorem shows that the existence of non-Hermitian quadratic $\mbox{Aut}(M)$-invariant strongly pseudoconvex complex Finsler metric on a complex manifold $M$ is not always possible!
\begin{theorem}\label{mth-2}
The unit ball $B_n=\{\|z\|^2<1\}$ in $\mathbb{C}^n$ admits no $\mbox{Aut}(B_n)$-invariant strongly pseudoconvex complex Finsler metrics other than a constant multiple of the Poincar$\acute{\mbox{e}}$-Bergman metric.
\end{theorem}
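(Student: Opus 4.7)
The plan is to exploit the fact that $\mbox{Aut}(B_n)$ acts transitively on $B_n$ with isotropy group $U(n)$ at the origin, so that an $\mbox{Aut}(B_n)$-invariant complex Finsler metric $F$ is completely determined by its restriction to the single tangent space $T_0^{1,0}B_n\cong\mathbb{C}^n$, and that restriction must itself be $U(n)$-invariant.

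First, I would analyse $F(0,\cdot)$. Complex homogeneity gives $F(0,\lambda v)=|\lambda|F(0,v)$ for every $\lambda\in\mathbb{C}$, while $U(n)$-invariance gives $F(0,Uv)=F(0,v)$ for every $U\in U(n)$. Since $U(n)$ acts transitively on the Euclidean spheres of $\mathbb{C}^n$, $F(0,v)$ depends only on $\|v\|$, and absolute homogeneity then forces $F(0,v)=c\|v\|$ for some constant $c\geq 0$; the strong pseudoconvexity hypothesis (non-vanishing off the zero section) rules out $c=0$. The Poincar\'{e}-Bergman metric $B$ on $B_n$ enjoys the same two symmetries at the origin -- being $\mbox{Aut}(B_n)$-invariant and Hermitian quadratic -- so the identical argument yields $B(0,v)=c_0\|v\|$ for some $c_0>0$.

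Next, I would propagate these two equalities to an arbitrary point $p\in B_n$: choose $\varphi\in\mbox{Aut}(B_n)$ with $\varphi(p)=0$. Invariance of $F$ under $\varphi$ gives $F(p,v)=F(0,\varphi_\ast v)=c\|\varphi_\ast v\|$, and the same reasoning applied to $B$ gives $B(p,v)=c_0\|\varphi_\ast v\|$. Dividing yields $F=(c/c_0)B$ on all of $T^{1,0}B_n$, which is the desired conclusion.

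The argument is essentially elementary once the two structural facts (transitivity of $\mbox{Aut}(B_n)$ and $U(n)$-isotropy at the origin) are in hand. The only step that deserves genuine care, and which I expect to be the main obstacle, is the passage from \emph{$U(n)$-invariant and absolutely homogeneous} to \emph{constant multiple of the Euclidean norm} on a single tangent space; this rigidity is forced by combining the transitive $U(n)$-action on Euclidean spheres with the positive homogeneity of $F$, together with the non-vanishing of $F$ off the zero section guaranteed by strong pseudoconvexity. Apart from this, the proof is a direct invariance computation and does not invoke any of the stronger K\"ahler-Berwald structure developed elsewhere in the paper, so in particular the rigidity already rules out non-Hermitian-quadratic examples and, a fortiori, any weakly K\"ahler-Finsler or Berwald deformations of $B$.
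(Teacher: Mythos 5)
Your proposal is correct and follows essentially the same route as the paper: restrict $F$ to $T_0^{1,0}B_n$, use the $U(n)$-isotropy at the origin to force $F(0,v)=c\|v\|$, and then transport this to every point by the transitive $\mbox{Aut}(B_n)$-action. The only (harmless) differences are that you prove the origin rigidity directly from the transitive action of $U(n)$ on Euclidean spheres plus absolute homogeneity, where the paper cites Theorem 2.1 of \cite{Zh3}, and that you conclude by comparing with the Bergman metric rather than computing $d\varphi_a|_{z=a}$ explicitly.
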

Actually the above theorem holds for a class of Kobayashi-hyperbolic manifolds.
\begin{theorem}
Let $M$ be a connected Kobayashi-hyperbolic manifold of complex dimension $n$. Suppose that $\dim\mbox{Aut}(M)=n^2+2n$. Then $M$ admits no $\mbox{Aut}(M)$-invariant strongly pseudoconvex complex Finsler metric other than a constant multiple of the Bergman metric, namely $M$ admits no $\mbox{Aut}(M)$-invariant non-Hermitian quadratic strongly pseudoconvex complex Finsler metric.
\end{theorem}

The following theorem shows that on the polydisk $P_n$ in $\mathbb{C}^n$ with $n\geq 2$, however, there are infinite many strongly convex complex Finsler metrics which are $\mbox{Aut}(P_n)$-invariant and non-Hermitian quadratic.
\begin{theorem}\label{mth-3}
Let $P_n$ be the  unit polydisk in $\mathbb{C}^n$ with $n\geq 2$. Then for any fixed $t\in[0,+\infty)$ and integer $k\geq 2$,
\begin{eqnarray*}
F_{t,k}(z,v)=\frac{1}{\sqrt{1+t}}\sqrt{\sum_{l=1}^n\frac{|v^l|^2}{(1-|z^l|^2)^2}+t\sqrt[k]{\sum_{l=1}^n\frac{|v^l|^{2k}}{(1-|z^l|^2)^{2k}}}},\quad \forall (z,v)\in T^{1,0}P_n
\end{eqnarray*}
is an $\mbox{Aut}(P_n)$-invariant complete strongly convex K\"ahler-Berwald metric.
\end{theorem}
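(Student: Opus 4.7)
The plan is to deduce this theorem almost entirely from Theorem~\ref{mth-1} together with the classical description of $\mathrm{Aut}(P_n)$. The polydisk $P_n=D\times\cdots\times D$ (with $D$ the unit disk in $\mathbb{C}$) is a simply connected complete K\"ahler manifold with respect to the product Poincar\'e-Bergman metric $Q=\sum_{l=1}^n Q_l$, where $Q_l(z^l,v^l)=|v^l|^2/(1-|z^l|^2)^2$ is the Poincar\'e metric on the $l$-th factor. Since each factor $(D,Q_l)$ is one-dimensional and hence irreducible, $(D,Q_1)\times\cdots\times (D,Q_n)$ is precisely the de Rham decomposition of $(P_n,Q)$, and $\pmb{\pi}_l:P_n\to D$ is just the projection onto the $l$-th coordinate.

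My first step is to apply Theorem~\ref{mth-1} directly to the data $(P_n,Q)$ above. Substituting $Q_l(\pmb{\pi}_l(z),(\pmb{\pi}_l)_\ast(v))=|v^l|^2/(1-|z^l|^2)^2$ into \eqref{ftk} reproduces exactly the formula for $F_{t,k}$ appearing in the statement. Theorem~\ref{mth-1} then provides simultaneously that $F_{t,k}$ is a strongly convex complete K\"ahler-Berwald metric on $P_n$, for every $t\in[0,+\infty)$ and every integer $k\geq 2$. This disposes of all of the metric/geometric claims of the theorem except the $\mathrm{Aut}(P_n)$-invariance.

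The second step is to verify the $\mathrm{Aut}(P_n)$-invariance using the well-known fact that every $\varphi\in\mathrm{Aut}(P_n)$ can be written uniquely in the form $\varphi(z^1,\ldots,z^n)=(\varphi_1(z^{\sigma(1)}),\ldots,\varphi_n(z^{\sigma(n)}))$ for some permutation $\sigma\in S_n$ and some M\"obius transformations $\varphi_l\in\mathrm{Aut}(D)$. Each $\varphi_l$ preserves the Poincar\'e metric on its factor, i.e.\ $Q_l(\varphi_l(w),(\varphi_l)_\ast(u))=Q_l(w,u)$, so the componentwise action preserves each summand $Q_l(\pmb{\pi}_l(z),(\pmb{\pi}_l)_\ast(v))$ and hence preserves both the sums $\sum_l Q_l$ and $\sum_l Q_l^k$ appearing under the radicals. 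The permutation $\sigma$ merely reindexes these sums, which are manifestly symmetric in the factors because all $Q_l$ have the identical functional form $|\cdot|^2/(1-|\cdot|^2)^2$. Combining the two observations shows that $F_{t,k}\circ d\varphi=F_{t,k}$ for all $\varphi\in\mathrm{Aut}(P_n)$.

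The only genuine content beyond Theorem~\ref{mth-1} is the $\mathrm{Aut}(P_n)$-invariance, and the main obstacle there is no more than invoking the structure of $\mathrm{Aut}(P_n)$; once this is in hand the invariance follows from the symmetric way in which the $Q_l$ enter \eqref{ftk}. Thus essentially no new calculation is required beyond what was carried out in proving Theorem~\ref{mth-1}.
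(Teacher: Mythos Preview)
Your proposal is correct and follows essentially the same route as the paper: both invoke Theorem~\ref{mth-1} (the paper's Theorem~\ref{mtha}) applied to $P_n=D\times\cdots\times D$ with the Poincar\'e metrics on the factors to obtain the strongly convex complete K\"ahler--Berwald property, and then use the explicit description of $\mathrm{Aut}(P_n)$ as M\"obius maps on factors composed with coordinate permutations to verify invariance. The only cosmetic difference is that the paper carries out the invariance verification by an explicit substitution of $w=f(z)$ into $F_{t,k}(w,dw)$, whereas you argue slightly more conceptually via preservation of each $Q_l$ and symmetry of the sums; the content is the same.
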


\begin{remark}
For $t=0$, $F_0^2=\displaystyle\sum_{l=1}^n\frac{|v^l|^2}{(1-|z^l|^2)^2}$ is the Bergman metric on $P_n$ which is clearly $\mbox{Aut}(P_n)$-invariant, Hermitian quadratic and $C^\infty$ on the whole holomorphic tangent bundle $T^{1,0}P_n$; for any fixed $t\in(0,+\infty)$ and integer $k\geq 2$, however, $F_{t,k}^2$ are non-Hermitian quadratic metrics and are only $C^\infty$ on the complement $\widetilde{P_n}$ of the zero section in $T^{1,0}P_n$, thus they serve as an important class of holomorphic invariant strongly convex complex Finsler metrics in the strict sense of \cite{ap}.
\end{remark}

%Note that the metric \eqref{pmi} enjoy very similar properties as that of the canonical Bergman metric on $P_n$, thus can be looked as a non-Hermitian quadratic analogue of the Bergman metric on $P_n$. More precisely, we have

%\begin{theorem}
%Let $P_n$ be the unit polydisk in $\mathbb{C}^n$ endowed with the $\mbox{Aut}(P_n)$-invariant metric \eqref{pm}. Then
%$$F_{t,k}^2(z,dz)\leq F_{0,k}^2(z,dz),$$
%and for any holomorphic mapping $w=f(z)$ from $P_n$ into self,
%\begin{equation}
%(f^\ast F_{t,k}^2)(z,dz)\leq (1+t)nF_{t,k}^2(z,dz).
%\end{equation}
%\end{theorem}

\begin{remark}\label{remark-3}
Note that by Proposition 5.2 in \cite{Zh4}, for any fixed $t\in(0,+\infty)$, integer $k\geq 2$ and $j\in\{1,2,\cdots,n-1\}$,
\begin{eqnarray*}
F_{t,k}(z,v)=\frac{1}{\sqrt{1+t}}\sqrt{\sum_{l=1}^n\frac{|v^l|^2}{(1-|z^l|^2)^2}+t\sqrt[k]{\Big(\sum_{l=1}^{j}\frac{|v^l|^{2}}{(1-|z^l|^2)^2}\Big)^k+\Big(\sum_{l=j+1}^n\frac{|v^l|^2}{(1-|z^l|^2)^2}\Big)^k}},\forall (z,v)\in T^{1,0}P_n
\end{eqnarray*}
 is also a strongly convex K\"ahler-Berwald metric on $P_n$ which is non-Hermitian quadratic. But $F_{t,k}$ is not $\mbox{Aut}(P_n)$-invariant whenever $n\geq 3$.
\end{remark}

\begin{theorem}\label{bh}
Let $M_1$ and $M_2$ be two complex manifolds and $M_1$ is biholomorphically equivalent to $M_2$. Then $M_2$ admits an $\mbox{Aut}(M_2)$-invariant strongly pseudoconvex complex Finsler metric iff $M_1$ admits an $\mbox{Aut}(M_1)$-invariant strongly pseudoconvex complex Finsler metric. More precisely, the following assertions hold:

(1) $M_2$ admits an $\mbox{Aut}(M_2)$-invariant Hermitian metric iff $M_1$ admits an $\mbox{Aut}(M_1)$-invariant Hermitian metric;

(2) $M_2$ admits an non-Hermitian quadratic $\mbox{Aut}(M_2)$-invariant strongly pseudoconvex complex Finsler metric iff $M_1$ admits an non-Hermitian quadratic $\mbox{Aut}(M_1)$-invariant strongly pseudoconvex complex Finsler metric;

(3) $M_2$ admits an $\mbox{Aut}(M_2)$-invariant complex Berwald metric (resp. weakly complex Berwald metric) iff $M_1$ admits an $\mbox{Aut}(M_1)$-invariant complex Berwald metric (resp.  weakly complex Berwald metric);

(4) $M_2$ admits an $\mbox{Aut}(M_2)$-invariant K\"ahler-Finsler metric (resp. weakly K\"ahler-Finsler metric)  iff $M_1$ admits an $\mbox{Aut}(M_1)$-invariant K\"ahler-Finsler metric (resp. weakly K\"ahler-Finsler metric).
\end{theorem}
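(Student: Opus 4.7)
The plan is to exploit the biholomorphism $\varphi:M_1\to M_2$ to transfer metrics by pullback, using the fact that every property appearing in the statement is a biholomorphic invariant. First I would observe that $\varphi$ induces a group isomorphism $\Phi:\mbox{Aut}(M_1)\to \mbox{Aut}(M_2)$ by $\Phi(\tau)=\varphi\circ\tau\circ\varphi^{-1}$, and that the tangent map $\varphi_\ast:T^{1,0}M_1\to T^{1,0}M_2$ is a fiberwise $\mathbb{C}$-linear bundle isomorphism covering $\varphi$.

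Next, given any $\mbox{Aut}(M_2)$-invariant strongly pseudoconvex complex Finsler metric $F_2$ on $M_2$, I would define the pullback
\[
F_1(z,v):=F_2(\varphi(z),\varphi_\ast(v)),\qquad (z,v)\in T^{1,0}M_1.
\]
The complex homogeneity $F_1(z,\lambda v)=|\lambda|F_1(z,v)$ and upper semicontinuity follow from the corresponding properties of $F_2$ and the $\mathbb{C}$-linearity of $\varphi_\ast$. Strong pseudoconvexity is preserved because the Levi matrix of $F_1^2$ in vertical directions is obtained from that of $F_2^2$ by conjugation with the invertible matrix of $\varphi_\ast$. To verify $\mbox{Aut}(M_1)$-invariance, for any $\tau\in\mbox{Aut}(M_1)$ let $\sigma:=\Phi(\tau)\in\mbox{Aut}(M_2)$; then
\[
F_1(\tau(z),\tau_\ast v)=F_2(\varphi\tau(z),(\varphi\tau)_\ast v)=F_2(\sigma\varphi(z),\sigma_\ast\varphi_\ast v)=F_2(\varphi(z),\varphi_\ast v)=F_1(z,v),
\]
using $\mbox{Aut}(M_2)$-invariance of $F_2$. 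The reverse implication is symmetric via $\varphi^{-1}$, which gives the iff for the base statement.

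For the four refined assertions I would argue that each property in question is a biholomorphic invariant of the pair $(M,F)$, so it survives the pullback construction above. Specifically: the Hermitian quadratic condition $F^2=g_{i\overline{j}}(z)v^i\overline{v^j}$ is preserved since $\varphi_\ast$ is $\mathbb{C}$-linear on fibers, giving (1), and the negation of this condition is likewise transferred, giving (2); the (weakly) complex Berwald condition, phrased in terms of the horizontal Chern-Finsler connection on $T^{1,0}M$, is natural with respect to holomorphic changes of coordinates since $\varphi_\ast$ intertwines the vertical and horizontal decompositions, giving (3); finally the K\"ahler-Finsler and weakly K\"ahler-Finsler conditions are defined by pointwise tensorial identities on the fundamental form that are manifestly invariant under pullback by a biholomorphism, giving (4).

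The only step that requires genuine care, and which I would treat as the principal obstacle, is checking that the connection-theoretic properties in (3) and (4) really are biholomorphically natural: one must verify that pullback by $\varphi$ commutes with the construction of the horizontal Chern-Finsler connection and of the $(1,1)$-form $\omega_F=\sqrt{-1}\,\partial\overline{\partial}F^2$ on $\widetilde{M}$, so that the tensorial identities defining (weakly) K\"ahler-Finsler and (weakly) complex Berwald metrics are preserved. Once this naturality is recorded, the four biconditionals follow immediately from the pullback construction and its inverse.
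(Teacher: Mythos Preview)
Your proposal is correct and follows essentially the same route as the paper. The paper's proof (carried out first for domains and then extended to general manifolds by passing to local coordinates) also defines the transferred metric by pullback along the biholomorphism, verifies strong pseudoconvexity via conjugation of the Levi matrix by the Jacobian, and checks $\mbox{Aut}$-invariance using the conjugation isomorphism $\sigma\mapsto \varphi^{-1}\circ\sigma\circ\varphi$ between the automorphism groups.

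The only substantive difference is in the treatment of (3) and (4). Where you appeal to naturality of the Chern--Finsler connection under biholomorphisms and flag this as the step ``requiring genuine care,'' the paper does the computation explicitly: it derives the transformation law for the nonlinear connection coefficients
\[
\breve{\varGamma}_{;\alpha}^{\gamma}=\hat{\varGamma}_{;\lambda}^{\sigma}\,\frac{\partial f_\gamma}{\partial z_1^\sigma}\,\frac{\partial g_\lambda}{\partial z_2^\alpha}+\frac{\partial f_\gamma}{\partial z_1^\beta}\,\frac{\partial^2 g_\beta}{\partial z_2^\lambda\partial z_2^\alpha}\,v_2^\lambda,
\]
differentiates to obtain the horizontal coefficients $\breve{\varGamma}_{\beta;\alpha}^{\gamma}$, and observes that the inhomogeneous term is symmetric in $\alpha,\beta$ and independent of $v$, so the (weakly) complex Berwald and (weakly) K\"ahler--Finsler conditions transfer. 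Your abstract naturality argument is valid, but note that the K\"ahler--Finsler condition is a torsion identity on the Chern--Finsler connection rather than a condition on the form $\sqrt{-1}\,\partial\overline{\partial}F^2$ alone; the concrete formula above is what makes this transparent.
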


The following theorem is a refinement of Theorem 5.25 in \cite{Deng}.
\begin{theorem}\label{mth-4}
Let $M_l=G_l/H_l$ be Hermitian symmetric spaces endowed with $G_l$-invariant Hermitian metrics $Q_l$ for $l=1,\cdots,n$ and $M=(G_1/H_1)\times \cdots\times (G_n/H_n)$ the product manifold. For any fixed $t\in[0,+\infty)$ and integer $k\geq 2$, define
\begin{equation}
F_{t,k}(z,v)=\frac{1}{\sqrt{1+t}}\sqrt{\sum_{l=1}^nQ_l(\pmb{\pi}_l(z),(\pmb{\pi}_l)_\ast(v))+t\sqrt[k]{\sum_{l=1}^nQ_l^k(\pmb{\pi}_l(z),(\pmb{\pi}_l)_\ast(v))}},\quad \forall (z,v)\in T^{1,0}M.\label{hsm}
\end{equation}
 Then

(1) $F_{t,k}$ is a strongly convex K\"ahler-Berwald metric $F_{t,k}$ on $M$;

(2) $F_{t,k}$ is invariant under $G_1\times\cdots\times G_n$ and makes $(M,F_{t,k})$ a symmetric complex Finsler space;

(3) if $K_l\equiv c\geq 0$, then $K_{t,k}\in\left[\frac{(1+t)c}{n+t\sqrt[k]{n}},c\right]$;

(4) if $K_l\equiv c<0$, then $K_{t,k}\in \left[c,\frac{(1+t)c}{n+t\sqrt[k]{n}}\right]$.
\end{theorem}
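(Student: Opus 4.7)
Parts (1) and (2) follow from Theorem \ref{mth-1} together with standard symmetric-space considerations. Each Hermitian symmetric space $M_l = G_l/H_l$ equipped with its $G_l$-invariant Hermitian metric $Q_l$ is a simply connected complete K\"ahler manifold, so the product $(M, h_M)$ with $h_M := Q_1 \oplus \cdots \oplus Q_n$ is itself a simply connected complete K\"ahler manifold whose de Rham decomposition refines the given product decomposition. Theorem \ref{mth-1}, combined with Remark \ref{remark-3} in case some $M_l$ are themselves reducible, then yields (1). For (2), the $(G_1 \times \cdots \times G_n)$-invariance of $F_{t,k}$ is immediate from the $G_l$-invariance of each $Q_l$ and the product form of the projections $\pmb{\pi}_l$; the symmetric-space structure is witnessed by the product $s_p = s_{p_1} \times \cdots \times s_{p_n}$ of factor geodesic symmetries, which is an involutive $F_{t,k}$-isometry fixing $p$ with $(s_p)_{\ast p} = -\mathrm{id}$.

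For parts (3) and (4), the argument rests on an explicit formula for the holomorphic sectional curvature of $F_{t,k}$. By Aikou's correspondence recalled in the introduction, the Chern-Finsler curvature of the K\"ahler-Berwald metric $F_{t,k}$ coincides with the Chern curvature of the background K\"ahler metric $h_M$; this curvature is block-diagonal in the factor index, and the hypothesis $K_l \equiv c$ gives $R^{h_M}(v, \bar v, v, \bar v) = c \sum_{l=1}^n Q_l(v_l)^2$ where $v_l = (\pmb{\pi}_l)_{\ast}(v)$. On the other hand, the Finsler metric tensor $g^{F_{t,k}}_{i \bar j}(z, v) = \partial^2 G_{t,k}/\partial v^i \partial \bar v^j$ acquires off-diagonal cross terms between different factors through the $C := \bigl(\sum_l Q_l(v_l)^k\bigr)^{1/k}$ term in $G_{t,k} = (A + tC)/(1+t)$, with $A := \sum_l Q_l(v_l)$. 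Contracting the two produces, after simplification, a formula for $K_{t,k}(v)$ as a rational expression in $A$, $B := \sum_l Q_l(v_l)^2$, and $C$.

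With this formula available, parts (3) and (4) reduce to an optimization problem on the simplex $\{(q_1, \dots, q_n) : q_l \geq 0, \sum_l q_l = A\}$, where $q_l := Q_l(v_l)$. The two extreme values $K_{t,k}(v) = c$ and $K_{t,k}(v) = (1+t)c/(n + t\sqrt[k]{n})$ should be attained respectively when $v$ is concentrated in a single factor ($q_{l_0} = A$, others zero) and when $v$ is uniformly distributed ($q_l = A/n$ for all $l$). To verify that these are the global extremes, the plan is to invoke Cauchy-Schwarz ($nB \geq A^2$), the estimate $B \leq A \cdot \max_l q_l \leq A \cdot C$ (using $\max_l q_l \leq C$), and a power-mean comparison of $B$ and $C$; the defining inequalities are polynomial in $t$ once $A, B, C$ are fixed, so the standard device is to expand in powers of $t$ and check each coefficient separately. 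The case $c < 0$ follows by reversing the inequalities throughout.

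The main obstacle I anticipate is the curvature computation itself: although the background Chern curvature splits cleanly over the factors, the Finsler metric tensor of $F_{t,k}$ does not, so extracting a usable closed-form expression for $K_{t,k}(v)$ requires a careful contraction that takes account of the off-diagonal cross terms coming from the $C$ summand. Once this formula is secured, the optimization step is purely combinatorial.
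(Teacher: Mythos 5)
Your proposal is correct and follows essentially the same route as the paper: parts (1)--(2) are reduced to Theorem \ref{mth-1} together with the evident $G_1\times\cdots\times G_n$-invariance and the product of factor geodesic symmetries, while parts (3)--(4) amount to re-deriving the paper's Theorem \ref{mthb}, whose proof is precisely the explicit curvature formula followed by the elementary inequalities you list. One simplification you will find when carrying out the computation: in the holomorphic sectional curvature formula the block-diagonal curvature is contracted against $G_\gamma=\partial G/\partial v^\gamma$ (a first vertical derivative) rather than against the full Finsler metric tensor, so the off-diagonal cross terms you anticipate never enter, and the resulting closed form also involves $\sum_l Q_l^{k+1}$ in addition to your $A$, $B$, $C$ --- but the Cauchy--Schwarz and power-mean comparisons you propose handle it coefficient-by-coefficient in $t$ exactly as planned.
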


The following theorem gives a characterization of strongly convex K\"ahler-Berwald spaces.

\begin{theorem}\label{mth-c}
A connected, simply connected, complete, strongly convex K\"ahler-Berwald space $(M,F)$ must be one of the following four types:

1) $(M,F)$ is a Hermitian space.

2) $(M,F)$ is a locally complex Minkowskian space.

3) $(M,F)$ is a locally irreducible and locally symmetric complete non-Hermitian K\"ahler-Berwald space of rank $r\geq 2$.

4) $(M,F)$ is locally reducible, and in this case $(M,F)$ can be locally decomposed into a Descartes product of Hermitian  spaces,
locally complex Minkowski spaces and locally irreducible symmetric complete non-Hermitian K\"ahler-Berwald spaces of rank $r\geq 2$.
\end{theorem}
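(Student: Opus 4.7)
The plan is to adapt Szabó's classical decomposition theorem for real Berwald spaces to the Kähler-Berwald setting, using Aikou's identification of the horizontal Chern-Finsler connection with a Hermitian connection on $M$ and then invoking de Rham together with a holonomy argument.

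First I would recall, as stated in the introduction, that on any Kähler-Berwald manifold $(M,F)$ there is an associated Kähler metric $h_M$ whose Hermitian connection coincides with the horizontal Chern-Finsler connection of $F$. Consequently the holonomy group of the Finsler connection equals the Kähler holonomy of $h_M$, and parallel transport with respect to $h_M$ preserves the Finsler norm $F$ on each tangent space. Completeness of $(M,F)$ (strongly convex, hence a genuine distance) translates into completeness of $(M,h_M)$. Applying the classical de Rham decomposition to the connected, simply connected, complete Kähler manifold $(M,h_M)$ yields a splitting
\begin{equation*}
(M,h_M)=(M_0,h_0)\times (M_1,h_1)\times\cdots\times (M_s,h_s),
\end{equation*}
where $(M_0,h_0)$ is a complex Euclidean factor and each $(M_i,h_i)$, $i\geq 1$, is an irreducible Kähler manifold. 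Since the holonomy acts reducibly with respect to this splitting and preserves $F$, the Finsler norm at any point decomposes as a function $F(v_0,v_1,\ldots,v_s)$ that is separately invariant under each factor's holonomy.

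Next I would analyze each irreducible factor. On $M_0$ the induced metric is a parallel Finsler norm on a complex Euclidean space; after parallel trivialization this is a complex Minkowski norm, so the $M_0$-component contributes a locally complex Minkowskian space (type 2). For each $i\geq 1$ the holonomy of $(M_i,h_i)$ acts irreducibly on $T_{p}^{1,0}M_i$. Here the dichotomy given by Berger's classification of irreducible holonomy groups enters: either the holonomy is of generic type, forcing every parallel Hermitian-homogeneous norm to be a constant multiple of $\sqrt{h_i}$, which puts the factor in type 1 (Hermitian); or the holonomy is that of a symmetric space, in which case $(M_i,h_i)$ is an irreducible Hermitian symmetric space. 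In the latter situation, if the Finsler norm restricted to the factor happens to be Hermitian quadratic we are again in type 1, while if it is genuinely non-Hermitian the holonomy representation together with $F$-invariance forces $M_i$ to be of rank $r\geq 2$ (rank one irreducible symmetric spaces have transitive $U(n)$-type isotropy which already forces Hermitian quadratic norms, as in Theorem \ref{mth-2}), giving type 3.

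Assembling the three possibilities across the factors of the de Rham decomposition gives precisely the four alternatives in the statement: if only one factor is present we are in cases 1, 2 or 3, and if at least two factors of different types occur we fall in case 4. The main obstacle will be the holonomy/representation-theoretic step showing that on an irreducible non-symmetric Kähler factor every parallel strongly convex Finsler norm is already Hermitian quadratic, and, in the symmetric case, that genuine non-Hermitian parallel norms can exist only when the rank is at least two. Both reductions rest on Berger's list combined with the observation, already exploited in Theorem \ref{mth-2}, that a sufficiently transitive isotropy action on the unit sphere of $T^{1,0}_pM$ leaves no non-Hermitian strongly convex norm invariant. Once these representation-theoretic lemmas are in place, the theorem follows by grouping the factors of the de Rham splitting according to which of the three elementary types they realize.
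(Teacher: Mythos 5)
Your outline is essentially correct, but it takes a genuinely different route from the paper. The paper's proof is very short: it first invokes Proposition 4.4 of Aikou to produce the associated K\"ahler metric $Q$ and the Hano--Matsushima/de Rham splitting of $(M,Q)$, then uses Theorems 1.2 and 1.4 of \cite{Zh1} to conclude that a strongly convex K\"ahler--Berwald metric is also a \emph{real} Berwald metric whose real and complex Berwald connections coincide, and finally cites Szab\'o's classification of positive definite real Berwald spaces (Theorem 3 in \cite{Szabo}) as a black box to obtain all four alternatives at once. You instead unfold the argument that sits \emph{inside} Szab\'o's theorem: de Rham splitting of the associated K\"ahler metric, invariance of the parallel Minkowski norm under the restricted holonomy of each factor, and the Berger--Simons dichotomy (transitive holonomy on the unit sphere forces the norm to be a multiple of the Hermitian one; only irreducible symmetric factors of rank $r\geq 2$, whose isotropy is not transitive on the sphere, can carry genuinely non-Hermitian parallel norms). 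What your route buys is a proof that lives entirely in the complex category, so the compatibility of the decomposition with the complex structure --- which the paper's appeal to the purely real Theorem 3 of \cite{Szabo} leaves implicit --- comes for free. What it costs is exactly the ``main obstacle'' you identify: the representation-theoretic lemmas (Simons' transitivity theorem plus the fact that a strongly convex norm invariant under a group acting transitively on the unit sphere is Euclidean) are left unproven in your sketch; they are standard, and indeed they are precisely the content of Szab\'o's theorem, so either proving them or citing \cite{Szabo} as the paper does would close your argument. One small point to tidy up: the reduction to the real Berwald setting (or at least the fact that the real fundamental tensor is parallel) is what guarantees that holonomy invariance of $F$ controls the \emph{real} Minkowski norm and not just its complex restriction; the paper gets this from \cite{Zh1}, and you should make the same appeal explicit.
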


The following theorem gives a de Rahm type decomposition theorem for strongly convex K\"ahler-Berwald spaces, which is a natural generalization of the de Rahm decomposition theorem for K\"ahler manifolds (cf. Theorem 8.1 in vol. II, \cite{KN}).
\begin{theorem}
A connected, simply connected, complete, strongly convex K\"ahler-Berwald space $(M,F)$
can be decomposed into the Descartes product of a complex Minkowski space,
simply connected complete irreducible Hermitian spaces and simply connected complete irreducible globally
 symmetric non-Hermitian strongly convex K\"ahler-Berwald space of rank $r\geq 2$. Such a decomposition is unique up to an order.
\end{theorem}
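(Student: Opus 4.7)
The plan is to combine Aikou's K\"ahler correspondence with the classical de Rham decomposition theorem and then to invoke Theorem \ref{mth-c} to identify the irreducible factors. First I will exploit the Berwald property recalled in the introduction to pass from $F$ to an associated K\"ahler metric $h_M$ on $M$ whose Hermitian connection coincides with the horizontal Chern-Finsler connection of $F$. The K\"ahler manifold $(M,h_M)$ is then connected, simply connected and complete, so the classical de Rham decomposition theorem for K\"ahler manifolds (Theorem 8.1, vol.~II of \cite{KN}) produces a unique (up to order) splitting
\begin{equation*}
(M,h_M)=(M_0,h_0)\times (M_1,h_1)\times\cdots\times (M_k,h_k),
\end{equation*}
in which $(M_0,h_0)$ is a complex Euclidean space and each $(M_l,h_l)$ with $l\geq 1$ is simply connected, complete and irreducible as a K\"ahler manifold.

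Next I will show that $F$ itself is compatible with this splitting. Being Berwald, the fundamental tensor of $F$ is parallel with respect to $\nabla^{h_M}$, and the holonomy representation of $\nabla^{h_M}$ splits as the product of the holonomy representations of the factors $(M_l,h_l)$. By the standard de Rham-type argument, the indicatrix of $F$ at each point then decomposes as a product of $h_l$-parallel indicatrices in $T^{1,0}M_l$; using completeness and simple connectedness this pointwise product structure can be propagated to a global product, so that $F$ is determined by its restrictions $F_l:=F|_{T^{1,0}M_l}$, each of which is itself a simply connected, complete, strongly convex K\"ahler-Berwald metric on $(M_l,h_l)$.

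Then I will classify each factor by applying Theorem \ref{mth-c} to $(M_l,F_l)$. The flat factor $(M_0,F_0)$ is locally complex Minkowskian (its associated K\"ahler metric $h_0$ is flat), hence, being simply connected and complete, it is globally a complex Minkowski space. For $l\geq 1$ the underlying K\"ahler manifold $(M_l,h_l)$ is non-flat and irreducible, which rules out alternatives 2) and 4) of Theorem \ref{mth-c}; therefore $(M_l,F_l)$ must be either a simply connected complete irreducible Hermitian space, or a simply connected complete irreducible globally symmetric non-Hermitian strongly convex K\"ahler-Berwald space of rank $r\geq 2$. Reordering the factors gives the asserted decomposition, and uniqueness up to order is inherited directly from the corresponding uniqueness of the classical K\"ahler de Rham decomposition of $(M,h_M)$.

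The step I expect to be the main obstacle is the second one: promoting the pointwise product decomposition of the indicatrices, obtained from the holonomy splitting, to a global product structure of $F$ on $T^{1,0}M$. This will follow the pattern of the classical Riemannian de Rham argument and rely crucially on completeness, simple connectedness, and the Berwald property, which together ensure that the relevant Finsler data are parallel with respect to $\nabla^{h_M}$ and that parallel transport produces a well-defined global splitting rather than merely a local one.
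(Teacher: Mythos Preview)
Your overall architecture---pass to the associated K\"ahler metric via Aikou, apply the K\"ahler de Rham decomposition, then classify the factors by Theorem~\ref{mth-c}---matches the paper's. But your second step contains a genuine misconception that would block the argument as stated.

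You assert that the indicatrix of $F$ at each point ``decomposes as a product of $h_l$-parallel indicatrices'' and hence that ``$F$ is determined by its restrictions $F_l:=F|_{T^{1,0}M_l}$''. This is false already for the metrics $F_{t,k}$ constructed in Section~3: for any $t\in(0,\infty)$ and integer $k\geq 2$, the restriction of $F_{t,k}$ to each factor $(M_l,Q_l)$ is exactly $\sqrt{Q_l}$, independently of $t$ and $k$, yet the $F_{t,k}$ are pairwise non-isometric (Theorem~\ref{mthc}(2)). The Minkowski norm on $T_p^{1,0}M$ is indeed invariant under the holonomy of the associated K\"ahler connection (this is the Berwald property), and the tangent space splits as a holonomy-invariant direct sum, but nothing forces the norm to be any particular function of its restrictions to the summands. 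So the ``main obstacle'' you flag is not merely hard; in the form you state it, it is impossible.

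The paper avoids this entirely because its notion of ``Descartes product'' (Definition~\ref{pb}) is deliberately weak: it requires only that the connection split as $\nabla=\nabla^{(1)}\times\cdots\times\nabla^{(n)}$ and that $F$ restrict to $F_l$ on each embedded factor; no reconstruction of $F$ from the $F_l$ is claimed or needed. The paper therefore simply restricts $(M,F,\nabla)$ to each de Rham factor $M_i$ of the associated K\"ahler manifold, notes that these two conditions hold automatically, and then identifies each $(M_i,F_i)$ using Theorem~\ref{mth-c}. One further ingredient you leave implicit is the bridge to Szab\'o's real Berwald theory: the paper invokes Theorems~1.2 and~1.4 of \cite{Zh1} to show that a strongly convex K\"ahler--Berwald metric is automatically real Berwald with coinciding real and complex Berwald connections; this is precisely what makes Szab\'o's structure theorem---and through it Theorem~\ref{mth-c}---applicable to each factor.
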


\section{Complex Finsler metrics}

In this section we first recall some notions of complex Finsler geometry \cite{ap}, and then give a rigidity property of strongly pseudoconvex complex Finsler metrics on the unit ball $B_n$ in $\mathbb{C}^n$.

Let $M$ be a complex manifold of complex dimension $n$. We denote $T^{1,0}M$ the holomorphic tangent bundle of $M$ and $\pi:T^{1,0}M\rightarrow M$  the natural projection. Note that $T^{1,0}M$ is a complex manifold of complex dimension $2n$. A point in $T^{1,0}M$ can be represented by $v\in T_{\pi(v)}^{1,0}M$ so that we can denote $(p,v)$ a point in $T^{1,0}M$ with the understanding that $p\in M$ and $v\in T_p^{1,0}M$. Locally, let $U\subset M$ be a coordinate neighborhood of $p$ such that $z=(z^1,\cdots,z^n)$ are local holomorphic coordinates on $U$. Then a vector $v\in T_p^{1,0}M$ is expressed as $v=v^\alpha\frac{\partial}{\partial z^\alpha}|_{p}$ (where here and in the following we use the Einstein summation convention). Thus we can denote $(z,v)=(z^1,\cdots,z^n,v^1,\cdots,v^n)$ the local holomorphic coordinates (as well as points if it causes no confusion) on the open subset $\pi^{-1}(U)\subset T^{1,0}M$. We call $z$ the base coordinates (or points) on $M$ while $v$ the fiber coordinates (or tangent directions). For a complex manifold $M$ we always denote $\widetilde{M}$ the complement of the zero section in $T^{1,0}M$, that is, $\widetilde{M}=T^{1,0}M\setminus\{\mbox{zero section}\}$.
\begin{definition}[\cite{ap}]\label{cd}
A strongly pseudoconvex complex Finsler metric on $M$ is a continuous function
$$F: T^{1,0}M\rightarrow [0,+\infty)$$
 satisfying

(i) $F(z,v)\geq 0$ for any $(z,v)\in T^{1,0}M$, the equality holds iff $v=0$;

(ii) $G=F^2(z,v)$ is $C^\infty$ for any $(z,v)\in \widetilde{M}$;

(iii) $G(z,\lambda v)=|\lambda|^2G(z,v)$ for any $\lambda\in\mathbb{C}$ and $(z,v)\in T^{1,0}M$;

(iv) the Hermitian matrix $(G_{\alpha\overline{\beta}}):=\Big(\frac{\partial^2G}{\partial v^\alpha\partial\overline{v^\beta}}\Big)$ is  positive definite for any $(z,v)\in\widetilde{M}$.
\end{definition}
\begin{remark}
The condition (iv) is equivalent to the strongly pseudoconvexity of the indicatrix
$$I_F(p)=\{v\in T_p^{1,0}M|F(p,v)<1\}$$
of $F$ at any $p\in M$, which is independent of the choice of local holomorphic coordinates on $M$.
\end{remark}
\begin{remark}
A strongly pseudoconvex complex Finsler metric is $C^k(k\geq 2)$ on $T^{1,0}M$ iff $F$ is a $C^k(k\geq 2)$ Hermitian metric on $M$ (cf. \cite{ap}), in this case we call $F$ is a Hermitian quadratic complex Finsler metric. Thus for a non-Hermitian quadratic complex Finsler metric $F$,  it is $C^\infty$ iff it is $C^\infty$ on $T^{1,0}M\setminus\{\mbox{zero section}\}$.
\end{remark}
\begin{remark}
If locally $F$ is independent of any $z\in M$, that is, $F(z,v)=f(v)$ for some $f$ satisfying the conditions (i)-(iv) in Definition \ref{cd} and any $(z,v)\in \pi^{-1}(U)$, then $(M,F)$ is called a locally complex Minkowski space. If furthermore, $z=(z^1,\cdots,z^n)$ is a global holomorphic coordinates on $M$, then $(M,F)$ is called a complex Minkowski space (cf. \cite{Aikou1}).
\end{remark}

If we denote
$$
z^\alpha=x^\alpha+\sqrt{-1}x^{\alpha+n},\quad v^\alpha=u^\alpha+\sqrt{-1}u^{\alpha+n},\quad \alpha=1,\cdots,n.
$$
Then $(x,u)=(x^1,\cdots,x^{2n}, u^1,\cdots,u^{2n})$ are local real coordinates on the real tangent bundle $TM$ which is $\mathbb{R}$-isomorphism with $T^{1,0}M$.  Using $F$ one may define a function $F^o(x,u)$ for any $(x,u)\in TM$ such that
\begin{equation}
F^o(x,u):=F(z,v).\label{rf}
\end{equation}
In the following we use $G$ to denote the square of the Finsler metric $F$ or $F^o$ (since by \eqref{rf} it is the same) and make the convention that greek indices run from $1$ to $n$, lattin indices run from $1$ to $2n$.

If
\begin{equation}
(g_{ab}):=\Big(\frac{1}{2}\frac{\partial^2G}{\partial u^a\partial u^b}\Big)\label{rc}
\end{equation}
is positive definite for any $(x,u)\in \widetilde{M}$ (here we use the same notion $\widetilde{M}$ to denote the complement of the zero section in $TM$ since $T^{1,0}M$ and $TM$ are $\mathbb{R}$-isomorphism), then $F$ is called strongly convex \cite{ap}.

\begin{remark}
Condition (iii) in Definition \ref{cd} implies that
\begin{equation}
G(z,v)=G_{\alpha\overline{\beta}}(z,v)v^\alpha\overline{v^\beta}.\label{oh}
\end{equation}
A $C^\infty$ Hermitian metric is
 $$ F(z,v)=\sqrt{h_{\alpha\overline{\beta}}(z)v^\alpha \overline{v^\beta}},$$
where $h_{\alpha\overline{\beta}}(z)$ is a $C^\infty$ Hermitian tensor and $(h_{\alpha\overline{\beta}})$ is positive definite on $M$. It is easy to check that a $C^\infty$ Hermitian metric is necessary a strongly convex complex Finsler metric.
 According to S. S. Shern \cite{SSC}, Finsler geometry is just Riemannian geometry without the quadratic restriction. Especially, complex Finsler geometry is  Hermitian geometry without the Hermitian quadratic restriction, which mainly dealing with the differential geometry properties of non-Hermitian quadratic metrics.
In view of \eqref{rc} and \eqref{oh}, the first fundamental form $ds^2$ of a strongly convex complex Finsler metric $F$ can be expressed as
\begin{equation}
ds^2=F^2(z,dz)=G_{\alpha\overline{\beta}}(z,dz)dz^\alpha d\overline{z^\beta}=g_{ab}(x,dx)dx^adx^b.
\end{equation}
\end{remark}

For a strongly pseudoconvex complex Finsler metric $F$ on $M$, the Chern-Finsler non-linear connection coefficients $\varGamma_{;\mu}^\alpha$ associated to $F$ are give by \cite{ap}
\begin{equation}
\varGamma_{;\mu}^\alpha(z,v)=\sum_{\tau=1}^nG^{\overline{\tau}\alpha}\frac{\partial^2G}{\partial \overline{v^\tau}\partial z^\mu},
\end{equation}
where $(G^{\overline{\tau}\alpha})$ is the inverse matrix of $(G_{\alpha\overline{\beta}})$. Note that $\varGamma_{;\mu}^\alpha$
 satisfy
\begin{equation}
\varGamma_{;\mu}^\alpha(z;\lambda v)=\lambda\varGamma_{;\mu}^\alpha(z,v),\quad\forall \lambda\in\mathbb{C}\setminus\{0\}\label{fnc}
\end{equation}
and
\begin{equation}
\varGamma_{;\mu}^\alpha(z,v)=\dot{\partial}_\nu(\varGamma_{;\mu}^\alpha)=\varGamma_{\nu;\mu}^\alpha(z,v)v^\nu,
\end{equation}
 where $\dot{\partial}_\nu=\partial/\partial v^\nu$ and $\varGamma_{\nu;\mu}^\alpha(z,v)$ are just the horizontal Chern-Finsler connection coefficients associated to $F$ (cf. \cite{ap}). Note that \eqref{fnc} does not mean that $\varGamma_{;\mu}^\alpha$ are complex linear with respect to $v$, and in general $\varGamma_{\nu;\mu}^\alpha$ depend on both $z$ and $v$. The horizontal Chern-Finsler connection is also called the complex Rund connection in \cite{Aikou1}, and called Finsler connection in \cite{Aikou2}.
\begin{remark}
If $F=\sqrt{h_{\alpha\overline{\beta}}(z)v^\alpha\overline{v^\beta}}$ is a Hermitian metric, then it is easy to check that
$$
\varGamma_{;\mu}^\alpha=\varGamma_{\nu;\mu}^\alpha(z)v^\nu,\quad \varGamma_{\nu;\mu}^\alpha=h^{\overline{\gamma}\alpha}(z)\frac{\partial h_{\nu\overline{\gamma}}(z)}{\partial z^\mu}.
$$
In this case, $\varGamma_{;\mu}^\alpha$ are complex linear with respect to $v$, $\varGamma_{\nu;\mu}^\alpha=\varGamma_{\nu;\mu}^\alpha(z)$ are independent of $v$, and $\varGamma_{\nu;\mu}^\alpha(z)$ are just the Hermitian connection coefficients associated to $F$.
\end{remark}

\begin{definition} [\cite{ap}]\label{kcd} A strongly pseudoconvex complex Finsler manifold $(M,F)$ is called a strongly K\"ahler-Finsler manifold iff
\begin{equation}
\varGamma_{\nu;\mu}^\alpha-\varGamma_{\mu;\nu}^\alpha=0;\label{sk}
\end{equation}
called a K\"ahler-Finsler manifold iff
\begin{equation}
(\varGamma_{\nu;\mu}^\alpha-\varGamma_{\mu;\nu}^\alpha)v^\nu=0;\label{k}
\end{equation}
called a weakly K\"ahler-Finsler manifold iff
\begin{equation}
G_\alpha(\varGamma_{\nu;\mu}^\alpha-\varGamma_{\mu;\nu}^\alpha)v^\nu=0.\label{wk}
\end{equation}
\end{definition}
\begin{remark}
Although the connection coefficients $\varGamma_{\nu;\mu}^\alpha$ are not globally defined on $\widetilde{M}$, \eqref{sk}-\eqref{wk} are different contractions of the torsion of the Chern-Finsler connection associated to $F$,  thus Definition \ref{kcd} does not depend on the choice of local holomorphic coordinates $(z,v)$ on $T^{1,0}M$.
\end{remark}

In \cite{CS}, B. Chen and Y. Shen proved that $F$ satisfies \eqref{k} implies that $F$ satisfies \eqref{sk}, thus leaving two notions of K\"ahlerian analogue in complex Finsler setting: K\"ahler-Finsler and weakly K\"ahler-Finsler conditions.

Denote
\begin{equation}
\mathbb{G}^\alpha=\frac{1}{2}\varGamma_{;\mu}^\alpha v^\mu,\quad \mathbb{G}_{\;\mu}^\alpha=\dot{\partial}_\mu(\mathbb{G}^\alpha),\quad \mathbb{G}_{\nu\mu}^\alpha=\dot{\partial}_\nu\dot{\partial}_\mu(\mathbb{G}^\alpha).
\end{equation}
Then a strongly pseudoconvex complex Finsler metric $F$ on $M$ is called a complex Berwald metric  if
$\varGamma_{\nu;\mu}^\alpha$
are independent of fiber coordinates $v$ (cf. \cite{Aikou1}); called a weakly complex Berwald metric if $\mathbb{G}^\alpha$ are holomorphic and quadratic with respect to fiber coordinates $v$ (equivalently,
$\mathbb{G}_{\nu\mu}^\alpha$ are independent of fiber coordinates $v$)(cf. \cite{Zh1}). Note that the definition of complex Berwald metric and weakly complex Berwald metric are also independent of the choice of local holomorphic coordinates $(z,v)$ on $T^{1,0}M$. There are indeed lots of weakly complex Berwald metrics which are not complex Berwald metrics \cite{Zh2}. In \cite{Zh3}, Xia and Zhong gave a classification of $U(n)$-invariant weakly complex Berwald metrics with constant holomorphic sectional curvatures.

\subsection{Holomorphic invariant complex Finsler metric on the unit ball}

 It is well-know that the unit disk $\triangle=\{|z|<1\}$ in $\mathbb{C}$ admits an $\mbox{Aut}(\triangle)$-invariant complete K\"ahler metric, that is, the Poincar$\acute{\mbox{e}}$ metric $\frac{|dz|^2}{(1-|z|^2)^2}$ with constant holomorphic curvature $-4$. The following theorem  shows that on the unit ball $B_n$ in $\mathbb{C}^n$ there actually exists no holomorphic invariant strongly pseudoconvex complex Finsler metric other than a constant multiple of the Poincar$\acute{\mbox{e}}$-Bergman metric.
\begin{theorem}\label{nb}
The unit ball $B_n=\{\|z\|^2<1\}$ in $\mathbb{C}^n$ admits no $\mbox{Aut}(B_n)$-invariant strongly pseudoconvex complex Finsler metric other than a constant multiple of the Poincar$\acute{\mbox{e}}$-Bergman metric.
\end{theorem}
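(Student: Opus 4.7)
The plan is to exploit two well-known structural facts about $\mbox{Aut}(B_n)$: it acts transitively on $B_n$, and the isotropy subgroup at the origin is $U(n)$ acting on $T_0 B_n \simeq \mathbb{C}^n$ by its standard representation. Together these reduce the identification of an $\mbox{Aut}(B_n)$-invariant strongly pseudoconvex complex Finsler metric $F$ to a single positive real constant.

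First I would look at the restriction $f(v):=F(0,v)$ to the fiber over the origin. Since $F$ is $\mbox{Aut}(B_n)$-invariant, $f$ is in particular $U(n)$-invariant; and since $F$ is absolutely homogeneous of degree $1$ in $v$, so is $f$. Because $U(n)$ acts transitively on the unit sphere $S^{2n-1}\subset\mathbb{C}^n$, $f$ must be constant on $S^{2n-1}$, so $f(v)=c\|v\|$ for some constant $c\geq 0$; strong pseudoconvexity (positivity of $F$ off the zero section) then forces $c>0$. This is the pointwise rigidity at the origin.

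Next I would globalize via transitivity. For each $p\in B_n$ pick $\phi_p\in\mbox{Aut}(B_n)$ with $\phi_p(p)=0$; then invariance of $F$ gives
\begin{equation*}
F(p,v)\;=\;F\bigl(0,(d\phi_p)_p v\bigr)\;=\;c\,\bigl\|(d\phi_p)_p v\bigr\|.
\end{equation*}
The right-hand side does not depend on the choice of $\phi_p$, because any two such choices differ by an element of the isotropy group $U(n)$ at $0$, which preserves the Hermitian norm. Applying exactly the same reasoning to the Poincar\'e--Bergman metric $F_B$, which is itself $\mbox{Aut}(B_n)$-invariant, yields $F_B(p,v)=c'\|(d\phi_p)_p v\|$ for some positive constant $c'$ independent of $p$. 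Hence $F\equiv (c/c')F_B$ on all of $T^{1,0}B_n$, which is the assertion of the theorem.

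There is no substantive analytic obstacle here; the proof is really a rigidity statement for $U(n)$-invariant $1$-homogeneous functions on $\mathbb{C}^n$, combined with homogeneous-space transitivity. The one point to be careful about is justifying that the isotropy at $0$ is indeed the full unitary group $U(n)$ (this is the classical Cartan decomposition of $\mbox{Aut}(B_n)\cong PU(n,1)$) and that the smoothness hypotheses in the definition of strongly pseudoconvex complex Finsler metric never enter the argument. The same plan will generalize almost verbatim to Theorem 1.3: once $\mbox{Aut}(M)$ acts transitively on $M$ with an isotropy that surjects onto $U(n)$ on the tangent space at a base point, the proof above applies and produces only constant multiples of the Bergman metric.
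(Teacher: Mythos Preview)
Your proof is correct and follows the same core strategy as the paper: use that the isotropy subgroup at the origin is $U(n)$ to pin down $F(0,\cdot)$ as a constant multiple of the Euclidean norm, then use transitivity of $\mbox{Aut}(B_n)$ to propagate this globally. The execution differs in two minor but worthwhile ways. First, your argument that $f(v)=c\|v\|$ uses only the transitivity of $U(n)$ on $S^{2n-1}$ together with $1$-homogeneity, which is more elementary and self-contained than the paper's appeal to an external result (Theorem~2.1 in \cite{Zh3}) on $U(n)$-invariant strongly pseudoconvex Minkowski norms. Second, the paper globalizes by explicitly computing the differential $d\varphi_a|_{z=a}$ of the standard involution and verifying by hand that the resulting expression coincides with the Poincar\'e--Bergman metric, whereas you sidestep this computation by noting that $F_B$ is itself $\mbox{Aut}(B_n)$-invariant and therefore satisfies $F_B(p,v)=c'\|(d\phi_p)_p v\|$ for some constant $c'>0$, yielding $F=(c/c')F_B$ immediately. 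Your route is cleaner and more abstract; the paper's is more concrete and produces the explicit formula for the Poincar\'e--Bergman metric as a byproduct.
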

\begin{proof}
By Theorem 1.4 in \cite{Zhu},  each $\varphi\in\mbox{Aut}(B_n)$ is of the form $\varphi=U\circ \varphi_a=\varphi_b\circ V$, where $U,V\in U(n)$ are unitary transformations of $\mathbb{C}^n$ and
$$
\varphi_a(z)=\frac{a-\frac{\langle z,a\rangle}{\|a\|^2}a-\sqrt{1-\|a\|^2}\Big(z-\frac{\langle z,a\rangle}{\|a\|^2}a\Big)}{1-\langle z,a\rangle}\in\mbox{Aut}(B_n), \quad 0\neq a\in B_n,
$$
are involutions which interchange the point $a\in B_n$ and $0$, that is, $\varphi_a\circ\varphi_a(z)=z$ and $\varphi_a(0)=a,\varphi_a(a)=0$.

Now suppose that $F(z,v)$ is any $\mbox{Aut}(B_n)$-invariant strongly pseudoconvex complex Finsler metric. Then restricting to the holomorphic tangent space $T_0^{1,0}B_n\cong\mathbb{C}^n$ at the origin $0\in B_n$, $f_0^2(v):=F^2(0,v)$ is a complex Minkowski norm, which should be $U(n)$-invariant, that is $f_0^2(Av)=f_0^2(v)$ for any unitary matrix $A$, this is because the isotropic subgroup of $\mbox{Aut}(B_n)$ at the origin $0\in B_n$ is just the unitary transformations $U(n)$ of $\mathbb{C}^n$. On the other hand, by Theorem 2.1 in \cite{Zh3}, the square of any
$U(n)$-invariant
strongly pseudoconvex complex Minkowski norm is necessary of the form $c\|\cdot\|^2$  where $c=\phi(0,0)>0$ is a positive constant and $\|\cdot\|^2$ is the canonical complex Euclidean norm on $\mathbb{C}^n$. That is, we must have $f_0^2(v)=c\|v\|^2$.

Denote by $\varphi_a(z)=(\varphi_a^1(z),\cdots,\varphi_a^n(z))$, then
\begin{eqnarray*}
\varphi_a^i(z)&=&\frac{a^i-\frac{\langle z,a\rangle}{\|a\|^2}a^i-s_a\Big(z^i-\frac{\langle z,a\rangle}{\|a\|^2}a^i\Big)}{1-\langle z,a\rangle},\\
d\varphi_a^i(z)|_{z=a}&=&-\frac{1}{1-\|a\|^2}\Bigg\{\frac{\langle dz,a\rangle}{\|a\|^2}a^i+\sqrt{1-\|a\|^2}\Big(dz^i-\frac{\langle dz,a\rangle}{\|a\|^2}a^i\Big)\Bigg\}.
%&&\frac{\partial f^i}{\partial z^j}\Big|_{z=a}=\frac{-\|a\|^2s_a\delta_{ij}+(s_a-1)a^i\overline{a^j}}{\|a\|^2(1-\|a\|^2)}
\end{eqnarray*}
Since $\mbox{Aut}(B_n)$ acts transitively on $B_n$ and $\varphi_a(a)=0$ for any $a\neq 0$,  we have
$$
F^2(a,dz)=F^2(0,d\varphi_a(z)|_{z=a})=f_0^2(d\varphi_a(z)|_{z=a})=c\cdot \frac{(1-\|a\|^2)|dz|^2+|\langle a,dz\rangle|^2}{(1-\|a\|^2)^2}.
$$
Equivalently for any $z\in B_n$ and $v\in T_z^{1,0}B_n$, we have
\begin{eqnarray*}
F^2(z,v)=c\cdot \frac{(1-\|z\|^2)|v|^2+|\langle z,v\rangle|^2}{(1-\|z\|^2)^2}.
\end{eqnarray*}
This completes the proof.

\end{proof}

\section{Strongly convex complex Finsler metrics on product manifolds}

Note that that a connected complex Berwald manifold $(M,F)$ is also called a complex manifold modeled on a complex Minkowski space \cite{Aikou1}. The importance of complex Berwald manifolds is that one can define parallel displacement of vector fields along curves on $M$ so that parallel displacements are linear isometry of $F$.
A strongly pseudoconvex complex Finsler manifold $(M,F)$ is called a K\"ahler-Berwald manifold if $F$ is both a K\"ahler-Finsler metric and a complex Berwald metric on $M$.

In this section, we shall give an answer to \textbf{Question 1} by proving Theorem \ref{mth-1}.
For this purpose, let's first give some notations of product complex manifolds.

\subsection{Product of complex manifolds}

For each fixed $l\in \{1,\cdots,n\}$, we assume that $M_l$ is a complex manifold of complex dimension $m_l$. Then the product manifold $M=M_1\times \cdots\times M_n$ is of complex dimension $N=m_1+\cdots+m_n$ with
 the natural projections
\begin{equation}
\pmb{\pi}_l: M\rightarrow M_l\label{pl}
\end{equation}
of $M$ onto the $l$-th factor $M_l$
and the natural embeddings
\begin{equation}
\pmb{i}_l:M_l\rightarrow M\label{il}
\end{equation}
 such that for any fixed point $p=(p_1,\cdots,p_n)\in M$, we have
$$\pmb{\pi}_l(p)=p_l\in M_l$$
and
\begin{equation*}
\pmb{i}_l(q)=(\underbrace{p_1,\cdots,p_{l-1}}_{l-1}, q,\underbrace{p_{l+1},\cdots,p_n}_{n-l})\in M, \quad \forall q\in M_l
\end{equation*}
for $l=1,\cdots,n$.

Thus for $p=(p_1,\cdots,p_n)\in M$, the holomorphic tangent space $T_p^{1,0}M$ can be identified with
$$T_{p_1}^{1,0}M_1\oplus \cdots \oplus T_{p_n}^{1,0}M_n.$$
Namely, let $v_l\in T_{p_l}^{1,0}M_l$, choose curves $z_l(t)$ such that $v_l$ is tangent to $z_l(t)$ at $p_l=z_l(t_0)$, then
$$(v_1,\cdots,v_n)\in T_{p_1}^{1,0}M_1\oplus \cdots\oplus T_{p_n}^{1,0}M_n$$
is identified with the vector $v\in T_p^{1,0}M$, which is tangent to the curve $z(t)=(z_1(t),\cdots,z_n(t))$ at the point $p=(p_1,\cdots,p_n)=(z_1(t_0),\cdots,z_n(t_0))$.
Conversely, let $\widetilde{v_l}\in T_p^{1,0}M$ be the vector tangent to the curve $(p_1,\cdots,p_{l-1}, z_l(t),p_{l+1},\cdots,p_n)$ in $M$ at the point $p=(p_1,\cdots,p_{l-1},p_l,p_{l+1},\cdots,p_n)$ for $l=1,\cdots,n$, that is
$$
\widetilde{v_l}=(\pmb{i}_l)_\ast(v_l),\quad \forall l=1,\cdots,n,
$$
then it is easy to check that $v=\widetilde{v_1}+\cdots+\widetilde{v_n}$.

It follows that if $v\in T_p^{1,0}M$ corresponds to $(v_1,\cdots,v_n)\in T_{p_1}^{1,0}M_1\oplus \cdots\oplus T_{p_n}^{1,0}M_n$ at the point $p=(p_1,\cdots,p_n)\in M$, then
\begin{equation}
(\pmb{\pi}_l)_\ast(v)=(\pmb{\pi}_l\circ \pmb{i}_1)_\ast (v_1)+\cdots+(\pmb{\pi}_l\circ \pmb{i}_n)_\ast(v_n)=(\pmb{\pi}_l\circ\pmb{i}_l)_\ast(v_l)=v_l,\quad \forall l=1,\cdots,n.
\end{equation}

In order to prove Theorem \ref{mth-1}, we need local coordinates on $M$ and its factors $M_1,\cdots,M_n$, respectively. Thus for each fixed $l\in \{1,\cdots,n\}$, we denote $z_l=(z_l^1,\cdots,z_l^{m_l})$ the local holomorphic coordinates  on a neighborhood  $U_l\ni p_l\in M_l$ such that
$$z_l^{\alpha_l}=x_l^{\alpha_l}+ix_l^{m_l+\alpha_l},\quad \alpha_l=1,\cdots,m_l.$$
Then $x_l=(x_l^1,\cdots,x_l^{2m_l})$ are the corresponding local real analytic coordinates on $U_l\ni p_l\in M_l$ for $l=1,\cdots,n$. So that
$$z=(z_1,\cdots,z_n)=(z_1^1,\cdots,z_1^{m_1},\cdots,z_n^1,\cdots,z_n^{m_n})$$ are local holomorphic coordinates on $U_1\times \cdots\times U_n\ni p=(p_1,\cdots,p_n)\in M$
 and
$$
x=(x_1,\cdots,x_n)=(x_1^1,\cdots,x_1^{2m_1},\cdots,x_n^1,\cdots,x_n^{2m_n})
$$
are the corresponding local real analytic coordinates. In the following we also use $z_l$ (or $x_l$), $z$ (or $x$)  to denote the corresponding points on $M_l$ and $M$, respectively if they cause no confusions.

For each fixed $l\in\{1,\cdots,n\}$, we denote $\pi_l:T^{1,0}M_l\rightarrow M_l$ the natural projections and $v_l\in T_{z_l}^{1,0}M_l$ (resp. $u_l\in T_{x_l}M$) the holomorphic vector (resp. real tangent vector) at the point $z_l\in M_l$ (resp. $x_l\in M_l$). Setting
\begin{eqnarray*}
v_l^{\alpha_l}&=&u_l^{\alpha_l}+iu_l^{\alpha_l+m_l},\quad \alpha_l=1,\cdots,m_l
\end{eqnarray*}
for each fixed $l\in\{1,\cdots,n\}$.
 Then $(z_l,v_l)=(z_l^1,\cdots,z_l^{m_l},v_l^1,\cdots,v_l^{m_l})$ are local holomorphic coordinates on $\pi_l^{-1}(U_l)\subset T^{1,0}M_l$ for $l=1,\cdots,n$, so that
 $$
 (z,v)=(z_1^1,\cdots,z_1^{m_1},\cdots,z_n^1,\cdots,z_n^{m_n},v_1^1,\cdots,v_1^{m_1},\cdots,v_n^1,\cdots,v_n^{m_n})
 $$
 are local holomorphic coordinates on $\pi_1^{-1}(U_1)\times \cdots \times\pi_n^{-1}(U_n)\subset T^{1,0}M$ and
$$
(x,u)=(x_1^1,\cdots,x_1^{2m_1},\cdots,x_n^1,\cdots,x_n^{2m_n},u_1^1,\cdots,u_1^{2m_1},\cdots,u_n^1,\cdots,u_n^{2m_n})
$$
are the corresponding local real analytic coordinates.
 In terms of local real and complex coordinates on  $\pi_l^{-1}(U_l)\subset T^{1,0}M_l$, the Hermitian metric $Q_l$ on $M_l$ can be represented as
\begin{equation}
Q_l(z_l,v_l)=\sum_{\alpha_l,\beta_l=1}^{m_l}[Q_l]_{\alpha_l\overline{\beta_l}}(z_l)v_l^{\alpha_l}\overline{v_l^{\beta_l}},\quad v_l\in T_{z_l}^{1,0}M_l
\end{equation}
and
\begin{equation}
Q_l(x_l,u_l)=\frac{1}{2}\sum_{a_l,b_l=1}^{2m_l}[Q_l]_{a_l b_l}(x_l)u_l^{a_l}u_l^{b_l},\quad u_l\in T_{x_l}M_l,\label{rhm}
\end{equation}
respectively, where
\begin{eqnarray*}
{[}Q_l]_{\alpha_l\overline{\beta_l}}(z_l)&=&\frac{\partial^2Q_l}{\partial v_l^{\alpha_l}\partial \overline{v_l^{\beta_l}}},\quad \alpha_l,\beta_l=1,\cdots,m_l,\\
{[}Q_l]_{a_l b_l}(x_l)&=&\frac{\partial^2Q_l}{\partial u_l^{a_l}\partial u_l^{b_l}},\quad a_l,b_l=1,\cdots,2m_l.
\end{eqnarray*}

In the following for each fixed $l\in \{1,\cdots,n\}$, we denote $([Q_l]^{\overline{\beta_l}\gamma_l})$ the inverse matrix of the $m_l\times m_l$ matrix $([Q_l]_{\alpha_l\overline{\beta_l}})$, and $([Q_l]^{b_lc_l})$ the inverse matrix of the $(2m_l)\times (2m_l)$ matrix $([Q_l]_{a_l b_l})$; we denote
\begin{equation}
\breve{\varGamma}_{a_s;b_s}^{c_s}(x_s)=\frac{1}{2}[Q_s]^{c_sd_s}\Big(\frac{\partial [Q_s]_{b_sd_s}}{\partial x_s^{a_s}}+\frac{\partial [Q_s]_{a_sd_s}}{\partial x_s^{b_s}}-\frac{\partial [Q_s]_{a_sb_s}}{\partial x_s^{d_s}}\Big)\label{lcc}
\end{equation}
the Levi-Civita connection coefficients associated to $Q_s$
which satisfy
\begin{equation}
\breve{\varGamma}_{;b_s}^{c_s}(x_s,u_s)=\breve{\varGamma}_{a_s;b_s}^{c_s}(x_s)u_s^{a_s}
\end{equation}
 and
\begin{equation}
\hat{\varGamma}_{\alpha_s;\beta_s}^{\gamma_s}(z_s)=[Q_s]^{\overline{\lambda_s}\gamma_s}\frac{\partial[Q_s]_{\alpha_s\overline{\lambda_s}}}{\partial z_s^{\beta_s}}\label{hcc}
\end{equation}
the Hermitian connection coefficients associated to $Q_l$
which satisfy
\begin{equation}
\hat{\varGamma}_{;\beta_s}^{\gamma_s}(z_s,v_s)=\hat{\varGamma}_{\alpha_s;\beta_s}^{\gamma_s}(z_s)v_s^{\alpha_s}.
\end{equation}
Finally, for each fixed $l\in\{1,\cdots,n\}$ we denote
\begin{equation}
K_l=-\frac{2}{Q_l^2}\frac{\partial Q_l}{\partial v_l^{\gamma_l}}\frac{\partial}{\partial \overline{z_l^{\mu_l}}}(\hat{\varGamma}_{;\alpha_l}^{\gamma_l})v_l^{\alpha_l}\overline{v_l^{\mu_l}}
\end{equation}
the holomorphic sectional curvature of $Q_l$ along $0\neq v_l\in T_{z_l}^{1,0}M_l$.

\subsection{Strongly convex K\"ahler-Berwald metrics on product manifolds}

In this section we are in a position to prove Theorem \ref{mth-1}. We recall it here for convenience.

 \begin{theorem}\label{mtha}
Let $(M,Q)$ be a simply connected complete reducible $C^\infty$ Hermitian manifold (resp. K\"ahler manifold) such that $(M_1,Q_1)\times \cdots\times (M_n,Q_n)$ is the de Rham decomposition of $(M,Q)$ and $\pmb{\pi}_l:M\rightarrow M_l$ are the natural projections of $M$ onto the $l$-th factors $M_l$ for $l=1,\cdots,n$. Then

(1) $M$ admits infinite many strongly convex complete complex Berwald metrics (resp. K\"ahler-Berwald metrics)
\begin{equation}
F_{t,k}(z,v)=\frac{1}{\sqrt{1+t}}\sqrt{\sum_{l=1}^nQ_l(\pmb{\pi}_l(z),(\pmb{\pi}_l)_\ast(v))+t\sqrt[k]{\sum_{l=1}^nQ_l^k(\pmb{\pi}_l(z),(\pmb{\pi}_l)_\ast(v))}},\quad\forall (z,v)\in T^{1,0}M,\label{ftk}
\end{equation}
where $t\in[0,+\infty)$ and $k\geq 2$ is an integer;

(2) $F_{t,k}$ is a complete real Berwald metric on $M$, thus $F_{t,k}$ has vanishing $\textbf{S}$-curvature for any $t\in[0,+\infty)$ and integer $k\geq 2$.
\end{theorem}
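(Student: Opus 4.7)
My strategy is to exploit the product structure of the de Rham decomposition and reduce all connection computations for $F_{t,k}$ to the Hermitian (resp.\ Levi--Civita) connections of the factors $(M_l,Q_l)$. Set $A:=\sum_{l=1}^n Q_l(z_l,v_l)$, $B:=\sum_{l=1}^n Q_l^k(z_l,v_l)$, and $G:=F_{t,k}^2=(A+tB^{1/k})/(1+t)$. Homogeneity $G(z,\lambda v)=|\lambda|^2 G(z,v)$ follows from $Q_l(z_l,\lambda v_l)=|\lambda|^2Q_l(z_l,v_l)$; continuity on $T^{1,0}M$ and smoothness on $\widetilde{M}$ are routine, and positivity off the zero section comes from $A>0$ there. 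For strong convexity it suffices to prove positive definiteness of the real Hessian of $G$ in the fiber coordinate $u$. The Hessian of $A$ is the (strictly positive definite) matrix of the product Riemannian metric of $(M,Q)$. Since the function $(y_1,\ldots,y_n)\mapsto (\sum y_l^k)^{1/k}$ is convex and non-decreasing in each $y_l\ge 0$, and each $Q_l$ is convex in $v$, the composition $B^{1/k}$ is convex in $v$, so its real Hessian is positive semi-definite. Summing gives positive definiteness of $\mathrm{Hess}_u G$, i.e.\ strong convexity of $F_{t,k}$.

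\textbf{Connection computation.} For part (1) I would compute the Chern--Finsler non-linear connection
$$\varGamma^\alpha_{;\mu}(z,v)=G^{\overline\tau\alpha}\frac{\partial^2 G}{\partial\overline{v^\tau}\,\partial z^\mu}.$$
Because each $Q_l$ depends only on $(z_l,v_l)$, a chain-rule calculation gives
$$\frac{\partial G}{\partial z_s^{\mu_s}}=\Phi_s\,\frac{\partial Q_s}{\partial z_s^{\mu_s}},\qquad \frac{\partial G}{\partial v_s^{\alpha_s}}=\Phi_s\,\frac{\partial Q_s}{\partial v_s^{\alpha_s}},\qquad \Phi_s:=\frac{1+tQ_s^{k-1}B^{1/k-1}}{1+t}.$$
Taking one further derivative, the matrix $(G_{\alpha\overline\beta})$ decomposes as a block-diagonal piece with $s$-th block $\Phi_s[Q_s]_{\alpha_s\overline{\beta_s}}$, plus a rank-$n$ correction of the form $\sum_{l,s}\Psi_{ls}\,(\partial Q_s/\partial v_s^{\alpha_s})\overline{(\partial Q_l/\partial v_l^{\beta_l})}$ for explicit smooth functions $\Psi_{ls}$; the mixed derivative $\partial^2 G/\partial\overline{v_l^{\tau_l}}\partial z_s^{\mu_s}$ admits a parallel decomposition. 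The crucial algebraic step is to invert $(G_{\alpha\overline\beta})$ by the Sherman--Morrison--Woodbury identity and to verify that, when contracted against $\partial^2 G/\partial\overline{v^\tau}\partial z^\mu$, the rank-$n$ corrections on both sides cancel exactly, leaving
$$\varGamma^{\alpha_l}_{;\mu_s}(z,v)=\delta_{ls}\,\hat{\varGamma}^{\alpha_s}_{\nu_s;\mu_s}(z_s)\,v_s^{\nu_s}.$$
This is holomorphic and complex-linear in $v$, so the horizontal coefficients $\varGamma^\alpha_{\nu;\mu}=\dot\partial_\nu\varGamma^\alpha_{;\mu}$ are independent of $v$ and equal to the pull-back of the Hermitian connection coefficients \eqref{hcc} of the factors, i.e.\ $F_{t,k}$ is a complex Berwald metric; in the K\"ahler case, the symmetry $\hat{\varGamma}^{\alpha_s}_{\nu_s;\mu_s}=\hat{\varGamma}^{\alpha_s}_{\mu_s;\nu_s}$ then yields \eqref{k}, giving the K\"ahler-Berwald conclusion.

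\textbf{Part (2) and completeness.} The real analogue of the previous calculation, performed with the Levi--Civita coefficients \eqref{lcc} in place of \eqref{hcc} and the same rank-$n$ cancellation, shows that the spray coefficients $\mathbb{G}^a$ of $F_{t,k}$ are quadratic in $u$ with coefficients depending only on $x$; this is the definition of a real Berwald metric, and vanishing $\mathbf S$-curvature then follows from the well-known fact that every real Berwald metric has zero $\mathbf S$-curvature with respect to the Busemann--Hausdorff volume form. Completeness is immediate from the two-sided comparison
$$\frac{1}{\sqrt{1+t}}\sqrt{A}\le F_{t,k}\le\sqrt{A},$$
which uses only $0\le B^{1/k}=(\sum Q_l^k)^{1/k}\le\sum Q_l=A$ (the reverse $\ell^k$--$\ell^1$ inequality) and exhibits $F_{t,k}$ as bi-Lipschitz equivalent to the complete product metric $\sqrt{A}$ underlying $Q$. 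The main obstacle in the whole argument is the rank-$n$ Sherman--Morrison cancellation in paragraph two: the fiber-dependent factors $\Phi_s$ and $\Psi_{ls}$ must be tracked through the double sums indexed by $l$ and $s$ before all cancellations become visible, and this is the only place where the specific algebraic form of \eqref{ftk} plays an essential role; every other conclusion follows mechanically once $\varGamma^\alpha_{;\mu}$ has been identified with the pull-back non-linear connection.
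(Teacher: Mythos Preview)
Your outline is correct and matches the paper's proof in its central step: both compute the Chern--Finsler nonlinear connection by writing $(G_{\alpha\overline\beta})$ as a block-diagonal perturbation of the $[Q_l]$ by a low-rank term, inverting via Sherman--Morrison (the paper does this in two stages, first rank-1 on each block, then rank-1 globally, citing Lemma~6.1 of \cite{Zh1}), and checking that the fiber-dependent pieces cancel to leave $\varGamma^{\gamma_i}_{;\alpha_s}=\delta_{is}\hat\varGamma^{\gamma_s}_{;\alpha_s}$. The real-Berwald computation in the paper is the literal real analogue, exactly as you anticipate. Where you differ is at the two ends: for strong convexity the paper carries out the full real Hessian computation via the Abate--Patrizio identity and reorganizes it into a visibly nonnegative sum of squares, whereas your argument---$A$ has strictly positive definite Hessian, $B^{1/k}$ is convex as the composition of the monotone convex $\ell^k$-norm with the convex $Q_l$, hence $G$ has positive definite Hessian---is shorter and avoids all of that algebra; for completeness the paper argues via affine equivalence of the Berwald connection with that of the product Hermitian metric, while your bi-Lipschitz bound $\tfrac{1}{\sqrt{1+t}}\sqrt A\le F_{t,k}\le\sqrt A$ gives the same conclusion with less machinery. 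Both shortcuts are legitimate; the paper's explicit Hessian formula is not needed elsewhere, so nothing is lost by your route.
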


Before proving the above theorem, we give some remarks.

\begin{remark}
Denote $\widetilde{M_l}$ the complement of the zero section in $T^{1,0}M_l$, that is, $\widetilde{M_l}=T^{1,0}M_l\setminus\{\mbox{zero section}\}$ for $l=1,\cdots,n$ and $\widetilde{M}=T^{1,0}M\setminus\{\mbox{zero section}\}$. Then it is clear that $F_{0}^2=Q_1+\cdots+Q_n$ is $C^\infty$ on the whole $T^{1,0}M$ while $F_{t,k}^2$ is $C^\infty$ on $\widetilde{M}$ for $t\in(0,+\infty)$ and integer $k\geq 2$.
%If we replace $Q_l$ with $F_l$ to define $F_{t,k}$ in \eqref{ftk}, the obtained $F_{t,k}$ is only smooth on $\widetilde{M_1}\times \cdots\times \widetilde{M_n}$ which is a proper subset of $\widetilde{M}$, thus is not a strongly pseudoconvex complex Finsler metric in the strict sense of \cite{AP}.
\end{remark}
\begin{remark}
The factor $\frac{1}{\sqrt{1+t}}$ before $F_{t,k}$ is important for our discussion, we use it so that the natural projections $\pmb{\pi}_l: (M,F_{t,k})\rightarrow (M_l,Q_l)$ and embeddings $\pmb{i}_l:(M_l,Q_l)\rightarrow (M,F_{t,k})$ are holomorphic isometries for $l=1,\cdots,n$.
The assertion that $F_{t,k}$ (without the factor $\frac{1}{\sqrt{1+t}}$) is a strongly convex complex Berwald metric was first proved in \cite{Zh4} for $n=2$. The proof of the general case $n>2$, however, cannot be  simply obtained by induction on $n$, see also the Remark \ref{remark-3}. Note that $F_{t,k}$ is a strongly convex complex Berwald metric clearly implies that $F_{t,k}$ is a strongly pseudoconvex complex Berwald metric \cite{DYH}.
%Unfortunately, the proof of the strongly pseudconvexity of $F_{t,k}$  via contracting the fundamental tensor of $F_{t,k}$ with respect to the fiber coordinates $v$ is incorrect, because this actually leads to the identity \eqref{oh} which is automatically nonnegative for any $(z,v)\in T^{1,0}M$ and positive for any $(z,v)\in\widetilde{M}$.
\end{remark}
\begin{remark}
In general, the differential geometry of non-quadratic real and complex Finsler metrics is not tightly related as that of Riemannian and Hermitian metrics. For example, the real Cartan connection and the Chern-Finsler connection associated to a strong convex K\"ahler-Finsler metric are different (cf. \cite{ap}), while in Riemannian geometry it is well-known that the Levi-Civita connection and the Hermitian connection of a K\"ahler metric coincides.  In \cite{Zh1}, the author proved that under the assumption that $(M,F)$ is a strongly convex weakly K\"ahler-Finsler manifold,  $F$ is complex Berwald metric iff $F$ is a real Berwald metric,  if furthermore $(M,F)$ is a strongly convex K\"ahler-Berwald manifold, then the real and complex Berwald connections associated to $F$ coincide (cf. Theorem 1.2 and Theorem 1.4 in \cite{Zh1}).  Theorem \ref{mtha} provides infinite many examples of strongly convex complex Berwald manifolds which are simultaneously real Berwald manifolds without the K\"ahler-Finsler assumption; moreover, it also provides us an effective way to construct the most important class of non-Hermitian strongly convex K\"ahler-Berwald manifolds on which the real Cartan connection and the Chern-Finsler connection associated to $F_{t,k}$ coincide (cf. \cite{Zh1}).
\end{remark}

\begin{proof} By the decomposition theorem of de Rham (cf. Theorem 6.2 of Chapter IV in volume I \cite{KN}), a simply connected, complete, reducible Hermitian manifold $(M,Q)$ is isometric to the direct product
$(M_1,Q_1)\times \cdots \times (Q_n,M_n)$, where $(M_1,Q_1),\cdots, (M_n,Q_n)$ are all simply connected, complete, irreducible Hermitian manifolds and moreover, such a decomposition is unique up to an order. If furthermore $(M,Q)$ is a K\"ahler manifold, then $(M_1,Q_1),\cdots, (M_n,Q_n)$ are all simply connected, complete, irreducible K\"ahler manifolds and the isometry between $M$ and $M_1\times\cdots\times M_n$ is holomorphic (cf. Theorem 8.1 of Chapter IX in volume II \cite{KN}). Here $Q_l$ is the corresponding Hermitian metric (resp. K\"ahler metric) on $M_l$ for $l=1,\cdots,n$.

Now let $F_{t,k}$ be defined by \ref{ftk}. It is clear that $F_{t,k}$ satisfies Definition \ref{cd} (i)-(iii). In the following we
 denote $G=F_{t,k}^2$ and set $\mathcal{A}=Q_1^k+\cdots+Q_n^k$. The proof are divided into $5$ steps.

\textbf{Step 1.} The strongly convexity of $F_{t,k}$.

Note that for any $(z,v)\in \widetilde{M}$, we have $Q_l(\pmb{\pi}_l(z),(\pmb{\pi}_l)_\ast (v))=Q_l(z_l,v_l)$ for $l=1,\cdots,n$. Denote $\mathcal{V}^{1,0}$ the complex vertical subbundle of $T^{1,0}\widetilde{M}$ and let
$$V=\sum_{l=1}^n\sum_{\alpha_l=1}^{m_l}V_l^{\alpha_l}\frac{\partial}{\partial v_l^{\alpha_l}}\in \mathcal{V}_{(z,v)}^{1,0},$$
with $\quad V_l^{\alpha_l}=U_l^{\alpha_l}+\sqrt{-1}U_l^{\alpha_l+m_l}$ for $l=1,\cdots,n$ and $\alpha_l=1,\cdots,m_l$.
By Proposition 2.6.1 in \cite{ap}, we have the following identity:
\begin{equation}
\sum_{l,s=1}^n\sum_{a_l=1}^{2m_l}\sum_{b_s=1}^{2m_s}\frac{\partial^2G}{\partial u_l^{a_l}\partial u_s^{b_s}}U_l^{a_l}U_s^{b_s}=2\mbox{Re}\sum_{l,s=1}^n\sum_{\alpha_l=1}^{m_l}\sum_{\beta_s=1}^{m_s}\Bigg\{\frac{\partial^2G}{\partial v_l^{\alpha_l}\partial\overline{v_s^{\beta_s}}}V_l^{\alpha_l}\overline{V_s^{\beta_s}}+\frac{\partial^2G}{\partial v_l^{\alpha_l}\partial v_s^{\beta_s}}V_l^{\alpha_l}V_s^{\beta_s}\Bigg\}.\label{sc}
\end{equation}
Thus in order to prove the strongly convexity of $F_{t,k}$, it is suffice to prove that the right hand side of \eqref{sc} is non-negative for any $V\in\mathcal{V}_{(z,v)}^{1,0}$ with $(z,v)\in\widetilde{M}$ and equality holds iff $V=0$.

Note that we have
\begin{eqnarray*}
\frac{\partial^2G}{\partial v_l^{\alpha_l}\partial\overline{v_s^{\beta_s}}}
&=&\Bigg\{\mathcal{E}_l[Q_l]_{\alpha_l\overline{\beta_l}}+t(k-1)\mathcal{A}^{\frac{1}{k}-1}Q_l^{k-2}\frac{\partial Q_l}{\partial v_l^{\alpha_l}}\frac{\partial Q_l}{\partial \overline{v_l^{\beta_l}}}\Bigg\}\delta_{ls}\\
&&-t(k-1)\mathcal{A}^{\frac{1}{k}-2}Q_l^{k-1}Q_s^{k-1}\frac{\partial Q_l}{\partial v_l^{\alpha_l}}\frac{\partial Q_s}{\partial \overline{v_s^{\beta_s}}},\\
\frac{\partial^2G}{\partial v_l^{\alpha_l}\partial v_s^{\beta_s}}
&=&t(k-1)\mathcal{A}^{\frac{1}{k}-1}Q_l^{k-2}\frac{\partial Q_l}{\partial v_l^{\alpha_l}}\frac{\partial Q_s}{\partial v_s^{\beta_s}}\delta_{ls}
-t(k-1)\mathcal{A}^{\frac{1}{k}-2}Q_l^{k-1}Q_s^{k-1}\frac{\partial Q_l}{\partial v_l^{\alpha_l}}\frac{\partial Q_s}{\partial v_s^{\beta_s}},
\end{eqnarray*}
where $\delta_{ls}$ is the Kronecker symbol and we denote
\begin{equation}
 \mathcal{E}_l=1+t \mathcal{A}^{\frac{1}{k}-1}Q_l^{k-1}.\label{bl}
\end{equation}

For each $l\in\{1,\cdots,n\}$ setting
$$\langle V_l,v_l\rangle_l=[Q_l]_{\alpha_l\overline{\beta_l}}(z_l)V_l^{\alpha_l}\overline{v_l^{\beta_l}},
$$
then $Q_l=[Q_l]_{\alpha\overline{\beta}}(z_l)v_l^\alpha \overline{v_l^\beta}$ and
$$
\overline{\langle V_l,v_l\rangle_l}=\langle v_l,V_l\rangle_l,\quad \forall l=1,\cdots,n.
$$
Thus we have
\begin{eqnarray*}
&&\sum_{l,s=1}^n\sum_{\beta_l=1}^{m_l}\sum_{\beta_s=1}^{m_s}\Bigg\{\frac{\partial^2G}{\partial v_l^{\alpha_l}\partial\overline{v_s^{\beta_s}}}V_l^{\alpha_l}\overline{V_s^{\beta_s}}+\frac{\partial^2G}{\partial v_l^{\alpha_l}\partial v_s^{\beta_s}}V_l^{\alpha_l}V_s^{\beta_s}\Bigg\}\\
%&=&\sum_{l=1}^n\Mathcal{E}_l\langle V_l,V_l\rangle_l-t(k-1)\mathcal{A}^{\frac{1}{k}-2}\Big(\sum_{l=1}^nQ_l^{k-1}\langle V_l,v_l\rangle_l\Big)\Big(\sum_{s=1}^nQ_s^{k-1}\langle v_s,V_s\rangle_s\Big)\\
%&&+t(k-1)\mathcal{A}^{\frac{1}{k}-1}\Big(\sum_{l=1}^nQ_l^{k-2}\langle V_l,v_l\rangle_l\langle v_l,V_l\rangle_l\Big)\\
%&&-t(k-1)\mathcal{A}^{\frac{1}{k}-2}\Big(\sum_{l=1}^nQ_l^{k-1}\langle V_l,v_l\rangle_l\Big)\Big(\sum_{s=1}^nQ_s^{k-1}\langle V_s,v_s\rangle_s\Big)\\
%&&+t(k-1)\mathcal{A}^{\frac{1}{k}-1}\Big(\sum_{l=1}^nQ_l^{k-2}\langle V_l,v_l\rangle_l\langle V_l,v_l\rangle_l\Big)\\
&=&\sum_{l=1}^n\mathcal{E}_l\langle V_l,V_l\rangle_l+t(k-1)\mathcal{A}^{\frac{1}{k}-2}\Bigg\{\mathcal{A}\Big(\sum_{l=1}^nQ_l^{k-2}\langle V_l,v_l\rangle_l\langle v_l,V_l\rangle_l+\sum_{l=1}^nQ_l^{k-2}\langle V_l,v_l\rangle_l^2\Big)\\
&&-\Big(\sum_{l=1}^nQ_l^{k-1}\langle V_l,v_l\rangle_l\sum_{s=1}^nQ_s^{k-1}\langle v_s,V_s\rangle_s+
\sum_{l=1}^nQ_l^{k-1}\langle V_l,v_l\rangle_l\sum_{s=1}^nQ_s^{k-1}\langle V_s,v_s\rangle_s\Big)\Bigg\}.
\end{eqnarray*}
Note that since $(z,v)\in\widetilde{M}$, we have $\mathcal{A}>0,\mathcal{E}_l>0,\langle V_l,V_l\rangle\geq 0$ and $\langle V_l,v_l\rangle_l\langle v_l,V_l\rangle_l\geq 0$ for $l=1,\cdots,n$. It follows that
\begin{eqnarray*}
2\mbox{Re}\Bigg\{\sum_{l=1}^nQ_l^{k-2}\langle V_l,v_l\rangle_l\langle v_l,V_l\rangle_l+\sum_{l=1}^nQ_l^{k-2}\langle V_l,v_l\rangle_l^2\Bigg\}
=\sum_{l=1}^nQ_l^{k-2}\Big[\langle V_l,v_l\rangle_l+\langle v_l,V_l\rangle_l\Big]^2
\end{eqnarray*}
and
\begin{eqnarray*}
&&2\mbox{Re}\Bigg\{\sum_{l=1}^nQ_l^{k-1}\langle V_l,v_l\rangle_l\sum_{s=1}^nQ_s^{k-1}\langle v_s,V_s\rangle_s+
\sum_{l=1}^nQ_l^{k-1}\langle V_l,v_l\rangle_l\sum_{s=1}^nQ_s^{k-1}\langle V_s,v_s\rangle_s\Bigg\}\\
&=&\Bigg(\sum_{l=1}^nQ_l^{k-1}[\langle V_l,v_l\rangle_l+\langle v_l,V_l\rangle_l]\Bigg)^2,
\end{eqnarray*}
from which we get
\begin{eqnarray*}
&&2\mbox{Re}\sum_{l,s=1}^n\sum_{\beta_l=1}^{m_l}\sum_{\beta_s=1}^{m_s}\Bigg\{
\frac{\partial^2G}{\partial v_l^{\alpha_l}\partial\overline{v_s^{\beta_s}}}V_l^{\alpha_l}\overline{V_s^{\beta_s}}+\frac{\partial^2G}{\partial v_l^{\alpha_l}\partial v_s^{\beta_s}}V_l^{\alpha_l}V_s^{\beta_s}
\Bigg\}\\
%&=&2\sum_{l=1}^n\mathcal{E}_l\langle V_l,V_l\rangle_l\\
%&&+t(k-1)\mathcal{A}^{\frac{1}{k}-2}\Big\{
%\mathcal{A}\sum_{l=1}^nQ_l^{k-2}[\langle V_l,v_l\rangle_l+\langle v_l,V_l\rangle_l]^2-\Big(\sum_{l=1}^nQ_l^{k-1}[\langle V_l,v_l\rangle_l+\langle %v_l,V_l\rangle_l]\Big)^2\Big\}\\
&=&2\sum_{l=1}^n\mathcal{E}_l\langle V_l,V_l\rangle_l\\
&&+t(k-1)\mathcal{A}^{\frac{1}{k}-2}\Bigg\{
\sum_{1\leq s<l\leq n}\Big[Q_s^kQ_l^{k-2}\Big(\langle V_l,v_l\rangle_l+\langle v_l,V_l\rangle_l\Big)^2+Q_s^{k-2}Q_l^{k}\Big(\langle V_s,v_s\rangle_s+\langle v_s,V_s\rangle_s\Big)^2\Big]\\
&&-2\sum_{1\leq s<l\leq n}Q_l^{k-1}Q_s^{k-1}\Big[\langle V_l,v_l\rangle_l+\langle v_l,V_l\rangle_l\Big]\Big[\langle V_s,v_s\rangle_s+\langle v_s,V_s\rangle_s\Big]\Bigg\}\\
&=&2\sum_{l=1}^n\mathcal{E}_l\langle V_l,V_l\rangle_l\\
&&+t(k-1)\mathcal{A}^{\frac{1}{k}-2}\Bigg\{
\sum_{1\leq s<l\leq n} Q_s^{k-2}Q_l^{k-2}\Big[Q_s(\langle V_l,v_l\rangle_l+\langle v_l,V_l\rangle_l)-Q_l(\langle V_s,v_s\rangle_s+\langle v_s,V_s\rangle_s)\Big]^2
\Bigg\}\\
&\geq& 0,
\end{eqnarray*}
and equality holds iff $V=0$.
Thus $F_{t,k}$ is a strongly convex complex Finsler metric on $M$.

\textbf{Step 2.}  $F_{t,k}$ is a real Berwald metric on $M$.

We prove this by deriving the real geodesic coefficients $\pmb{G}^{b_s}$ associated to $F_{t,k}$ and show that $\pmb{G}^{b_s}$ are quadratic with respect to $u_l=(u_l^1,\cdots,u_l^{2m_l})$ for $l=1,\cdots,n$.
Let's denote $(G_{a_lb_s}):=\Big(\frac{\partial^2G}{\partial u_l^{a_l}\partial u_s^{b_s}}\Big)$ the real fundamental tensor associated to $F_{t,k}$. Then
\begin{eqnarray*}
%\frac{\partial G}{\partial u_l^{a_l}}&=&\frac{\partial Q_l}{\partial u_l^{a_l}}+t\mathcal{A}^{\frac{1}{k}-1}Q_l^{k-1}\frac{\partial Q_l}{\partial u_l^{a_l}},\\
G_{a_lb_s}&=&\Bigg\{\mathcal{E}_l\frac{\partial^2Q_l}{\partial u_l^{a_l}\partial u_l^{b_l}}+t(k-1)\mathcal{A}^{\frac{1}{k}-1}Q_l^{k-2}\frac{\partial Q_l}{\partial u_l^{a_l}}\frac{\partial Q_l}{\partial u_l^{b_l}}\Bigg\}\delta_{ls}
-t(k-1)\mathcal{A}^{\frac{1}{k}-2}Q_l^{k-1}Q_s^{k-1}\frac{\partial Q_l}{\partial u_l^{a_l}}\frac{\partial Q_s}{\partial u_s^{b_s}},
\end{eqnarray*}
that is, the real fundamental tensor matrix of $F_{t,k}$ is given by the following $2N\times 2N$ matrix:
\begin{eqnarray}
(G_{a_lb_s})
=\begin{pmatrix}
   \frac{\partial^2G}{\partial u_1^1\partial u_1^1} & \cdots & \frac{\partial^2G}{\partial u_1^1\partial u_1^{2m_1}} & \cdots & \frac{\partial^2G}{\partial u_1^1\partial u_n^1} & \cdots & \frac{\partial^2G}{\partial u_1^1\partial u_n^{2m_n}} \\
    \vdots & \vdots & \vdots & \vdots & \vdots & \vdots &\vdots\\
   \frac{\partial^2G}{\partial u_1^{2m_1}\partial u_1^1} & \cdots & \frac{\partial^2G}{\partial u_1^{2m_1}\partial u_1^{2m_1}} & \cdots & \frac{\partial^2G}{\partial u_1^{2m_1}\partial u_n^1} & \cdots & \frac{\partial^2G}{\partial u_1^{2m_1}\partial u_n^{2m_n}} \\
   \vdots & \vdots & \vdots & \vdots & \vdots & \vdots & \vdots \\
   \frac{\partial^2G}{\partial u_n^1\partial u_1^1} & \cdots & \frac{\partial^2G}{\partial u_n^1\partial u_1^{2m_1}} & \cdots & \frac{\partial^2G}{\partial u_n^1\partial u_n^1} & \cdots & \frac{\partial^2G}{\partial u_n^1\partial u_n^{2m_n}} \\
    \vdots & \vdots & \vdots & \vdots & \vdots & \vdots & \vdots \\
   \frac{\partial^2G}{\partial u_n^{2m_n}\partial u_1^1} & \cdots & \frac{\partial^2G}{\partial u_n^{2m_n}\partial u_1^{2m_1}} & \cdots & \frac{\partial^2G}{\partial u_n^{2m_n}\partial u_n^1} & \cdots & \frac{\partial^2G}{\partial u_n^{2m_n}\partial u_n^{2m_n}} \\
 \end{pmatrix}.\label{rf}
\end{eqnarray}

In the following we denote $(G^{b_sc_r})$ the inverse matrix of $(G_{a_lb_s})$.  For this purpose, we shall rewrite it in terms of blocked matrices. For $l=1,\cdots,n$, we denote $\pmb{B}_l=([B_l]_{a_lb_l})$ the $2m_l\times 2m_l$ matrix with
\begin{eqnarray*}
 [B_l]_{a_lb_l}&=&\mathcal{E}_l\Bigg\{[Q_l]_{a_lb_l}+\frac{t(k-1)\mathcal{A}^{\frac{1}{k}-1}Q_l^{k-2}}{1+t \mathcal{A}^{\frac{1}{k}-1}Q_l^{k-1}}\frac{\partial{Q}_l}{\partial u_l^{a_l}}\frac{\partial Q_l}{\partial u_l^{b_l}}\Bigg\}
\end{eqnarray*}
and let
\begin{eqnarray*}
\pmb{Z}&=&\begin{pmatrix}
    \pmb{Z}_1 \\
    \vdots \\
    \pmb{Z}_n \\
  \end{pmatrix},\quad \pmb{Z}^T=(\pmb{Z}_1^T,\cdots,\pmb{Z}_n^T),
\end{eqnarray*}
where
$$
 \pmb{Z}_l=\begin{pmatrix}
                            Q_l^{k-1}\frac{\partial Q_l}{\partial u_l^1} \\
                            \vdots \\
                            Q_l^{k-1}\frac{\partial Q_l}{\partial u_l^{m_l}} \\
                          \end{pmatrix},\quad l=1,\cdots,n.
$$

Then \eqref{rf} can be rewritten as
$$
(G_{a_lb_s})
=\begin{pmatrix}
   \pmb{B}_{1} & \cdots & 0 \\
   \vdots & \ddots & \vdots \\
   0 & \cdots & \pmb{B}_{n} \\
 \end{pmatrix}
 -t(k-1)\mathcal{A}^{\frac{1}{k}-2}\pmb{Z}\pmb{Z}^T.
$$
Now for each fixed $l\in\{1,\cdots,n\}$, we denote $\pmb{B}_l^{-1}=([B_l]^{b_lc_l})$ the inverse matrix of $\pmb{B}_l$. Then using Lemma 6.1 in \cite{Zh1}, we have
\begin{eqnarray}
[B_l]^{b_lc_l}=\frac{1}{\mathcal{E}_l}\Bigg\{
[Q_l]^{b_lc_l}-\frac{t(k-1)\mathcal{A}^{\frac{1}{k}-1}Q_l^{k-2}}{1+t(2k-1)\mathcal{A}^{\frac{1}{k}-1}Q_l^{k-1}}u_l^{b_l}u_l^{c_l}
\Bigg\},\label{ibl}
\end{eqnarray}
where $([Q_l]^{b_lc_l})$ is the inverse matrix of $([Q_l]_{a_lb_l})$. Using \eqref{ibl}, it is easy to check that
 \begin{eqnarray}
\mathcal{C}:&=&1-t(k-1)\mathcal{A}^{\frac{1}{k}-2}\pmb{Z}^T\begin{pmatrix}
   \pmb{B}_{1} & \cdot & 0 \\
   \vdots & \ddots & \vdots \\
   0 & \cdots & \pmb{B}_{n} \\
 \end{pmatrix}^{-1}\pmb{Z}\nonumber\\
 &=&1-\sum_{l=1}^n\frac{2t(k-1)\mathcal{A}^{\frac{1}{k}-2}Q_l^{2k-1}}{1+t(2k-1)\mathcal{A}^{\frac{1}{k}-1}Q_l^{k-1}}\nonumber\\
 &=&\frac{1}{\mathcal{A}}\Bigg\{\sum_{l=1}^nQ_l^k-\sum_{l=1}^n\frac{2t(k-1)\mathcal{A}^{\frac{1}{k}-1}Q_l^{2k-1}}{1+t(2k-1)\mathcal{A}^{\frac{1}{k}-1}Q_l^{k-1}}\Bigg\}\nonumber\\
 %&=&\frac{1}{\mathcal{A}}\sum_{l=1}^n\frac{(1+t\mathcal{A}^{\frac{1}{k}-1}Q_l^{k-1})Q_l^k}{1+t(2k-1)\mathcal{A}^{\frac{1}{k}-1}Q_l^{k-1}}\nonumber\\
 &=&\frac{1}{\mathcal{A}}\sum_{l=1}^n\frac{\mathcal{E}_lQ_l^k}{1+t(2k-1)\mathcal{A}^{\frac{1}{k}-1}Q_l^{k-1}}>0\label{cel}
\end{eqnarray}
 for any $(x,u)\in\widetilde{M}$. Now using Lemma 6.1 in \cite{Zh1}, it follows that the inverse matrix $(G^{a_lc_r})$ of $(G_{a_lb_s})$ is given by
\begin{eqnarray*}
(G^{a_lc_r})
%&=&\begin{pmatrix}
%   \pmb{B}_{1} & \cdots & 0 \\
%   \vdots & \ddots & \vdots \\
%   0 & \cdots & \pmb{B}_{n} \\
% \end{pmatrix}^{-1}+\frac{t(k-1)\mathcal{A}^{\frac{1}{k}-2}}{\mathcal{C}}
% \begin{pmatrix}
%   \pmb{B}_{1} & \cdots & 0 \\
%   \vdots & \ddots & \vdots \\
%   0 & \cdots & \pmb{B}_{n} \\
% \end{pmatrix}^{-1}
% \pmb{Z}\pmb{Z}^T
% \begin{pmatrix}
%   \pmb{B}_{1} & \cdots & 0 \\
%   \vdots & \ddots & \vdots \\
%   0 & \cdots & \pmb{B}_{n} \\
% \end{pmatrix}^{-1}\\
 &=&\begin{pmatrix}
   \pmb{B}_{1}^{-1} & \cdots & 0 \\
   \vdots & \ddots & \vdots \\
   0 & \cdots & \pmb{B}_{n}^{-1} \\
 \end{pmatrix}+\frac{t(k-1)\mathcal{A}^{\frac{1}{k}-2}}{\mathcal{C}}
 \begin{pmatrix}
   \pmb{B}_{1}^{-1} & \cdots & 0 \\
   \vdots & \ddots & \vdots \\
   0 & \cdots & \pmb{B}_{n}^{-1} \\
 \end{pmatrix}
 \pmb{Z}\pmb{Z}^T
 \begin{pmatrix}
   \pmb{B}_{1}^{-1} & \cdots & 0 \\
   \vdots & \ddots & \vdots \\
   0 & \cdots & \pmb{B}_{n}^{-1} \\
 \end{pmatrix},\\
\end{eqnarray*}
or equivalently, with $G^{b_sc_r}$ being given by
\begin{eqnarray}
G^{b_sc_r}=[B_s]^{b_sc_s}\delta_{sr}+\frac{t(k-1)\mathcal{A}^{\frac{1}{k}-2}}{\mathcal{C}}
W_s^{b_s}W_r^{c_r},\label{igsr}
\end{eqnarray}
with
\begin{eqnarray}
W_s^{b_s}
&=&\frac{1}{\mathcal{E}_s}\Bigg\{[Q_s]^{b_sa_s}-\frac{t(k-1)\mathcal{A}^{\frac{1}{k}-1}Q_s^{k-2}}{1+t(2k-1)\mathcal{A}^{\frac{1}{k}-1}Q_s^{k-1}}\Bigg\}Q_s^{k-1}\frac{\partial Q_s}{\partial u_s^{a_s}}\nonumber\\
&=&\frac{1}{\mathcal{E}_s}\Bigg\{u_s^{b_s}-\frac{2t(k-1)\mathcal{A}^{\frac{1}{k}-1}Q_s^{k-1}u_s^{b_s}}{1+t(2k-1)\mathcal{A}^{\frac{1}{k}-1}Q_s^{k-1}}\Bigg\}Q_s^{k-1}\nonumber\\
&=&\frac{Q_s^{k-1}u_s^{b_s}}{1+t(2k-1) \mathcal{A}^{\frac{1}{k}-1}Q_s^{k-1}}.\label{ws}
\end{eqnarray}
Notice that
\begin{eqnarray*}
\frac{\partial G}{\partial u_r^{c_r}}&=&\mathcal{E}_r\frac{\partial Q_r}{\partial u_r^{c_r}},\quad
\frac{\partial G}{\partial x_r^{c_r}}=\mathcal{E}_r\frac{\partial Q_r}{\partial x_r^{c_r}}=\sum_{l=1}^n\mathcal{E}_r\frac{\partial Q_r}{\partial x_l^{c_l}}\delta_{rl},
\end{eqnarray*}
thus
\begin{eqnarray}
\sum_{l=1}^n\frac{\partial^2G}{\partial u_r^{c_r}\partial x_l^{a_l}}u_l^{a_l}
&=&\Bigg[\mathcal{E}_r\frac{\partial^2Q_r}{\partial u_r^{c_r}\partial x_l^{a_l}}
+t(k-1)\mathcal{A}^{\frac{1}{k}-1}Q_r^{k-2}\frac{\partial Q_r}{\partial u_r^{c_r}}\frac{\partial Q_r}{\partial x_l^{a_l}}\Bigg]u_l^{a_l}\delta_{rl}\nonumber\\
&&-t(k-1)\mathcal{A}^{\frac{1}{k}-2}Q_r^{k-1}Q_l^{k-1}\frac{\partial Q_r}{\partial u_r^{c_r}}\frac{\partial Q_l}{\partial x_l^{a_l}}u_l^{a_l}.\label{prl}
\end{eqnarray}
It follows that the real geodesic spray coefficients $\pmb{G}^{b_s}$ of $F_{t,k}$ are given by (here we use the formula in page 28 in \cite{ap}, which is different from formula defined in \cite{BCS} and \cite{Shen} by a factor $\frac{1}{2}$ because of the choice of the fundamental tensor are different by a factor $\frac{1}{2}$)
\begin{equation}
2\pmb{G}^{b_s}=\sum_{r,l=1}^n\sum_{a_l=1}^{2m_l}\sum_{c_r=1}^{2m_r}G^{b_sc_r}\Big(\frac{\partial^2G}{\partial u_r^{c_r}\partial x_l^{a_l}}u_l^{a_l}-\frac{\partial G}{\partial x_r^{c_r}}\Big).\label{gec}
\end{equation}
Substituting \eqref{igsr} and \eqref{prl} into \eqref{gec}, we have
\begin{eqnarray*}
2\pmb{G}^{b_s}
&=&\mathcal{E}_s\sum_{c_s=1}^{2m_s}[B_s]^{b_sc_s}\Bigg(\sum_{a_s=1}^{2m_s}\frac{\partial^2Q_s}{\partial u_s^{c_s}\partial x_s^{a_s}}u_s^{a_s}-\frac{\partial Q_s}{\partial x_s^{c_s}}\Bigg)\\
&&+t(k-1)\mathcal{A}^{\frac{1}{k}-1}Q_s^{k-2}\Big(\sum_{c_s=1}^{2m_s}[B_s]^{b_sc_s}\frac{\partial Q_s}{\partial u_s^{c_s}}\Big)\Big(\sum_{a_s=1}^{2m_s}\frac{\partial Q_s}{\partial x_s^{a_s}}u_s^{a_s}\Big)\\
&&-t(k-1)\mathcal{A}^{\frac{1}{k}-2}Q_s^{k-1}\Big(\sum_{c_s=1}^{2m_s}[B_s]^{b_sc_s}\frac{\partial Q_s}{\partial u_s^{c_s}}\Big)\Big(\sum_{l=1}^n\sum_{a_l=1}^{2m_l}Q_l^{k-1}\frac{\partial Q_l}{\partial x_l^{a_l}}u_l^{a_l}\Big)
\\
&&+\frac{t(k-1)\mathcal{A}^{\frac{1}{k}-2}}{\mathcal{C}}W_s^{b_s}
\sum_{r=1}^n\mathcal{E}_r\sum_{c_r=1}^{2m_r}W_r^{c_r}
\Big(\sum_{a_r=1}^{2m_r}\frac{\partial^2 Q_r}{\partial u_r^{c_r}\partial x_r^{a_r}}u_r^{a_r}-\frac{\partial Q_r}{\partial x_r^{c_r}}\Big)\\
&&+\frac{t(k-1)\mathcal{A}^{\frac{1}{k}-2}}{\mathcal{C}}W_s^{b_s}
\sum_{r=1}^nt(k-1)\mathcal{A}^{\frac{1}{k}-1}Q_r^{k-2}\sum_{c_r=1}^{2m_r}W_r^{c_r}
\Big(\sum_{a_r=1}^{2m_r}\frac{\partial Q_r}{\partial u_r^{c_r}}\frac{\partial Q_r}{\partial x_r^{a_r}}u_r^{a_r}\Big)\\
&&-\frac{t(k-1)\mathcal{A}^{\frac{1}{k}-2}}{\mathcal{C}}W_s^{b_s}
\sum_{r=1}^nt(k-1)\mathcal{A}^{\frac{1}{k}-2}Q_r^{k-1}\sum_{c_r=1}^{2m_r}W_r^{c_r}\frac{\partial Q_r}{\partial u_r^{c_r}}
\Big(\sum_{l=1}^n\sum_{a_l=1}^{2m_l}Q_l^{k-1}\frac{\partial Q_l}{\partial x_l^{a_l}}u_l^{a_l}\Big)\\
%&&-\frac{t(k-1)\mathcal{A}^{\frac{1}{k}-2}}{\mathcal{C}}W_s^{b_s}
%\sum_{r=1}^n\mathcal{E}_rQ_r^{k-1}\sum_{c_r=1}^{2m_r}W_r^{c_r}\frac{\partial Q_r}{\partial x_r^{c_r}}.
\end{eqnarray*}
Note that for each fixed $r\in\{1,\cdots,n\}$, we have
\begin{eqnarray}
\sum_{c_r=1}^{2m_r}\frac{\partial^2 Q_r}{\partial u_r^{c_r}\partial x_r^{a_r}} u_r^{c_r}&=&2\frac{\partial Q_r}{\partial x_r^{a_r}},\quad\quad\quad
\sum_{c_r=1}^{2m_r}\frac{\partial Q_r}{\partial u_r^{c_r}}u_r^{c_r}=2Q_r.\label{icb}
\end{eqnarray}
Using \eqref{ibl} and \eqref{icb} we have
\begin{eqnarray*}
&&\mathcal{E}_s\sum_{c_s=1}^{2m_s}[B_s]^{b_sc_s}\Bigg(\sum_{a_s=1}^{2m_s}\frac{\partial^2Q_s}{\partial u_s^{c_s}\partial x_s^{a_s}}u_s^{a_s}-\frac{\partial Q_s}{\partial x_s^{c_s}}\Bigg)\\
&=&\sum_{c_s=1}^{2m_s}\Bigg\{
[Q_s]^{b_sc_s}-\frac{t(k-1)\mathcal{A}^{\frac{1}{k}-1}Q_s^{k-2}}{1+t(2k-1)\mathcal{A}^{\frac{1}{k}-1}Q_s^{k-1}}u_s^{b_s}u_s^{c_s}
\Bigg\}\Bigg(\sum_{a_s=1}^{2m_s}\frac{\partial^2Q_s}{\partial u_s^{c_s}\partial x_s^{a_s}}u_s^{a_s}-\frac{\partial Q_s}{\partial x_s^{c_s}}\Bigg)\\
&=&\sum_{c_s=1}^{2m_s}[Q_s]^{b_sc_s}\Bigg(\sum_{a_s=1}^{2m_s}\frac{\partial^2Q_s}{\partial u_s^{c_s}\partial x_s^{a_s}}u_s^{a_s}-\frac{\partial Q_s}{\partial x_s^{c_s}}\Bigg)-\frac{2t(k-1)\mathcal{A}^{\frac{1}{k}-1}Q_s^{k-2}u_s^{b_s}}{1+t(2k-1)\mathcal{A}^{\frac{1}{k}-1}Q_s^{k-1}}\Big(\sum_{a_s=1}^{2m_s}\frac{\partial Q_s}{\partial x_s^{a_s}}u_s^{a_s}\Big)\\
&&+\frac{t(k-1)\mathcal{A}^{\frac{1}{k}-1}Q_s^{k-2}u_s^{b_s}}{1+t(2k-1)\mathcal{A}^{\frac{1}{k}-1}Q_s^{k-1}}\Big(\sum_{c_s=1}^{2m_s}
\frac{\partial Q_s}{\partial x_s^{c_s}}u_s^{c_s}\Big)\\
&=&\sum_{c_s=1}^{2m_s}[Q_s]^{b_sc_s}\Bigg(\sum_{a_s=1}^{2m_s}\frac{\partial^2Q_s}{\partial u_s^{c_s}\partial x_s^{a_s}}u_s^{a_s}-\frac{\partial Q_s}{\partial x_s^{c_s}}\Bigg)-\frac{t(k-1)\mathcal{A}^{\frac{1}{k}-1}Q_s^{k-2}u_s^{b_s}}{1+t(2k-1)\mathcal{A}^{\frac{1}{k}-1}Q_s^{k-1}}\Big(\sum_{a_s=1}^{2m_s}\frac{\partial Q_s}{\partial x_s^{a_s}}u_s^{a_s}\Big).
\end{eqnarray*}
 Note that $[Q_s]^{b_sc_s}\frac{\partial Q_s}{\partial u_s^{c_s}}=u_s^{b_s}$, this together with the second equality in \eqref{icb} imply
 \begin{eqnarray*}
 \sum_{c_s=1}^{2m_s}[B_s]^{b_sc_s}\frac{\partial Q_s}{\partial u_s^{c_s}}=\frac{1}{\mathcal{E}_s}\Bigg\{u_s^{b_s}-\frac{2t(k-1)\mathcal{A}^{\frac{1}{k}-1}Q_s^{k-1}}{1+t(2k-1)\mathcal{A}^{\frac{1}{k}-1}Q_s^{k-1}}u_s^{b_s}\Bigg\}
 =\frac{u_s^{b_s}}{1+t(2k-1)\mathcal{A}^{\frac{1}{k}-1}Q_s^{k-1}}.
 \end{eqnarray*}
 Using \eqref{icb}, we also have
 \begin{eqnarray*}
\sum_{c_r=1}^{2m_r}W_r^{c_r}
\Big(\sum_{a_r=1}^{2m_r}\frac{\partial^2 Q_r}{\partial u_r^{c_r}\partial x_r^{a_r}}u_r^{a_r}-\frac{\partial Q_r}{\partial x_r^{c_r}}\Big)
&=&\frac{Q_r^{k-1}}{1+t(2k-1)\mathcal{A}^{\frac{1}{k}-1}Q_r^{k-1}}\Big(\sum_{a_r=1}^{2m_r}\frac{\partial Q_r}{\partial x_r^{a_r}}u_r^{a_r}\Big),\\
\sum_{c_r=1}^{2m_r}W_r^{c_r}\Big(\sum_{a_r=1}^{2m_r}\frac{\partial Q_r}{\partial u_r^{c_r}}\frac{\partial Q_r}{\partial x_r^{a_r}}u_r^{a_r}\Big)
&=&\frac{2Q_r^{k}}{1+t(2k-1)\mathcal{A}^{\frac{1}{k}-1}Q_r^{k-1}}\Big(\sum_{a_r=1}^{2m_r}\frac{\partial Q_r}{\partial x_r^{a_r}}u_r^{a_r}\Big),\\
\sum_{c_r=1}^{2m_r}W_r^{c_r}\frac{\partial Q_r}{\partial u_r^{c_r}}\Big(\sum_{l=1}^n\sum_{a_l=1}^{2m_l}Q_l^{k-1}\frac{\partial Q_l}{\partial x_l^{a_l}}u_l^{a_l}\Big)
&=&\frac{2Q_r^{k}}{1+t(2k-1)\mathcal{A}^{\frac{1}{k}-1}Q_r^{k-1}}\Big(\sum_{l=1}^n\sum_{a_l=1}^{2m_l}Q_l^{k-1}\frac{\partial Q_l}{\partial x_l^{a_l}}u_l^{a_l}\Big).
\end{eqnarray*}

So that
\begin{eqnarray*}
2\pmb{G}^{b_s}
&=&\sum_{c_s=1}^{2m_s}[Q_s]^{b_sc_s}\Bigg(\sum_{a_s=1}^{2m_s}\frac{\partial^2Q_s}{\partial u_s^{c_s}\partial x_s^{a_s}}u_s^{a_s}-\frac{\partial Q_s}{\partial x_s^{c_s}}\Bigg)\\
&&-\frac{t(k-1)\mathcal{A}^{\frac{1}{k}-1}Q_s^{k-2}u_s^{b_s}}{1+t(2k-1)\mathcal{A}^{\frac{1}{k}-1}Q_s^{k-1}}\sum_{a_s=1}^{2m_s}\frac{\partial^2Q_s}{\partial x_s^{a_s}}u_s^{a_s}\\
&&+\frac{t(k-1)\mathcal{A}^{\frac{1}{k}-1}Q_s^{k-2}u_s^{b_s}}{1+t(2k-1)\mathcal{A}^{\frac{1}{k}-1}Q_s^{k-1}}\Big(\sum_{a_s=1}^{2m_s}\frac{\partial Q_s}{\partial x_s^{a_s}}u_s^{a_s}\Big)\\
&&-\frac{t(k-1)\mathcal{A}^{\frac{1}{k}-2}Q_s^{k-1}u_s^{b_s}}{1+t(2k-1)\mathcal{A}^{\frac{1}{k}-1}Q_s^{k-1}}\Big(\sum_{l=1}^n\sum_{a_l=1}^{2m_l}Q_l^{k-1}\frac{\partial Q_l}{\partial x_l^{a_l}}u_l^{a_l}\Big)
\\
&&+\frac{t(k-1)\mathcal{A}^{\frac{1}{k}-2}}{\mathcal{C}}W_s^{b_s}
\sum_{r=1}^n\frac{\mathcal{E}_rQ_r^{k-1}}{1+t(2k-1)\mathcal{A}^{\frac{1}{k}-1}Q_r^{k-1}}\Big(\sum_{a_r=1}^{2m_r}\frac{\partial Q_r}{\partial x_r^{a_r}}u_r^{a_r}\Big)\\
&&+\frac{t(k-1)\mathcal{A}^{\frac{1}{k}-2}}{\mathcal{C}}W_s^{b_s}
\sum_{r=1}^n\frac{2t(k-1)\mathcal{A}^{\frac{1}{k}-1}Q_r^{2(k-1)}}{1+t(2k-1)\mathcal{A}^{\frac{1}{k}-1}Q_r^{k-1}}\Big(\sum_{a_r=1}^{2m_r}\frac{\partial Q_r}{\partial x_r^{a_r}}u_r^{a_r}\Big)\\
&&-\frac{t(k-1)\mathcal{A}^{\frac{1}{k}-2}}{\mathcal{C}}W_s^{b_s}
\sum_{r=1}^n\frac{2t(k-1)\mathcal{A}^{\frac{1}{k}-2}Q_r^{2k-1}}{1+t(2k-1)\mathcal{A}^{\frac{1}{k}-1}Q_r^{k-1}}\Big(\sum_{l=1}^n\sum_{a_l=1}^{2m_l}Q_l^{k-1}\frac{\partial Q_l}{\partial x_l^{a_l}}u_l^{a_l}\Big).
\end{eqnarray*}

Note that
\begin{eqnarray*}
\frac{\mathcal{E}_rQ_r^{k-1}}{1+t(2k-1)\mathcal{A}^{\frac{1}{k}-1}Q_r^{k-1}}+\frac{2t(k-1)\mathcal{A}^{\frac{1}{k}-1}Q_r^{2(k-1)}}{1+t(2k-1)\mathcal{A}^{\frac{1}{k}-1}Q_r^{k-1}}=Q_r^{k-1},\
\end{eqnarray*}
and
\begin{eqnarray*}
&&1+\frac{1}{\mathcal{C}}\sum_{r=1}^n\frac{2t(k-1)\mathcal{A}^{\frac{1}{k}-2}Q_r^{2k-1}}{1+t(2k-1)\mathcal{A}^{\frac{1}{k}-1}Q_r^{k-1}}\\
&=&\frac{1}{\mathcal{C}}\Bigg\{\frac{1}{\mathcal{A}}\sum_{r=1}^n\frac{(1+t\mathcal{A}^{\frac{1}{k}-1}Q_r^{k-1})Q_r^k}{1+t(2k-1)\mathcal{A}^{\frac{1}{k}-1}Q_r^{k-1}}+\sum_{r=1}^n\frac{2t(k-1)\mathcal{A}^{\frac{1}{k}-2}Q_r^{2k-1}}{1+t(2k-1)\mathcal{A}^{\frac{1}{k}-1}Q_r^{k-1}}\Bigg\}\\
&=&\frac{1}{\mathcal{C\mathcal{A}}}\Bigg\{\sum_{r=1}^n\frac{(1+t\mathcal{A}^{\frac{1}{k}-1}Q_r^{k-1})Q_r^k}{1+t(2k-1)\mathcal{A}^{\frac{1}{k}-1}Q_r^{k-1}}+\sum_{r=1}^n\frac{2t(k-1)\mathcal{A}^{\frac{1}{k}-1}Q_r^{2k-1}}{1+t(2k-1)\mathcal{A}^{\frac{1}{k}-1}Q_r^{k-1}}\Bigg\}\\
&=&\frac{1}{\mathcal{C\mathcal{A}}}\sum_{r=1}^nQ_r^k=\frac{1}{\mathcal{C}}.
\end{eqnarray*}

Now using \eqref{cel} and \eqref{ws}, and rearranging terms we obtain
\begin{eqnarray*}
2\pmb{G}^{b_s}
&=&\sum_{c_s=1}^{2m_s}[Q_s]^{b_sc_s}\Bigg(\sum_{a_s=1}^{2m_s}\frac{\partial^2Q_s}{\partial u_s^{c_s}\partial x_s^{a_s}}u_s^{a_s}-\frac{\partial Q_s}{\partial x_s^{c_s}}\Bigg)\\
&&+\frac{t(k-1)\mathcal{A}^{\frac{1}{k}-2}}{\mathcal{C}}W_s^{b_s}
\Big(\sum_{r=1}^n\sum_{a_r=1}^{2m_r}Q_r^{k-1}\frac{\partial Q_r}{\partial x_r^{a_r}}u_r^{a_r}\Big)\\
&&-\frac{t(k-1)\mathcal{A}^{\frac{1}{k}-2}}{\mathcal{C}}W_s^{b_s}
\Big(\sum_{l=1}^n\sum_{a_l=1}^{2m_l}Q_l^{k-1}\frac{\partial Q_l}{\partial x_l^{a_l}}u_l^{a_l}\Big)\\
&=&\sum_{c_s=1}^{2m_s}[Q_s]^{b_sc_s}\Bigg(\sum_{a_s=1}^{2m_s}\frac{\partial^2Q_s}{\partial u_s^{c_s}\partial x_s^{a_s}}u_s^{a_s}-\frac{\partial Q_s}{\partial x_s^{c_s}}\Bigg)\\
&=&\breve{\varGamma}_{c_s;a_s}^{b_s}(x_s)u_s^{c_s}u_s^{a_s},
\end{eqnarray*}
where in the last equality we used \eqref{rhm} and \eqref{lcc}.
Thus for any fixed $i,j,l\in\{1,\cdots,n\}$ and $a_i\in\{1,\cdots,m_i\},b_j\in\{1,\cdots,m_j\}$
 and $b_l\in\{1,\cdots,m_l\}$,
the real Berwald connection coefficients $\varGamma_{a_i;c_j}^{b_l}$
 associated to $F_{t,k}$ are given by
\begin{eqnarray*}
\varGamma_{a_i;c_j}^{b_l}=\frac{\partial^2 \pmb{G}^{b_l}}{\partial u_i^{a_i}\partial u_j^{c_j}}
=\left\{
   \begin{array}{ll}
     \breve{\varGamma}_{a_l;c_l}^{b_l}, & i=j=l, \\
     0, & \hbox{otherwise},
   \end{array}
 \right.
\end{eqnarray*}
where $\breve{\varGamma}_{a_l;c_l}^{b_l}$ are the Levi-Civita connection coefficients associated to $Q_l$ for $l=1,\cdots,n$.
Thus $F_{t,k}$ is a real Berwald metric, so that the \textbf{S}-curvature of $F_{t,k}$ vanishes identically \cite{Shen}.

\textbf{Step 3.} $F_{t,k}$ is a complex Berwald metric on $M$.

We prove it by showing that the horizontal Chern-Finsler connection coefficients $\varGamma_{\beta;\alpha}^\gamma$
associated to $F_{t,k}$ are independent of $v$. Let's denote
$$
\pmb{H}=\begin{pmatrix}
   \frac{\partial^2G}{\partial v_1^1\partial \overline{v_1^1}} & \cdots & \frac{\partial^2G}{\partial v_1^1\partial \overline{v_1^{m_1}}} & \cdots & \frac{\partial^2G}{\partial v_1^1\partial \overline{v_n^1}} & \cdots & \frac{\partial^2G}{\partial v_1^1\partial \overline{v_n^{m_n}}} \\
    \vdots & \vdots & \vdots & \vdots & \vdots & \vdots &\vdots\\
   \frac{\partial^2G}{\partial v_1^{m_1}\partial \overline{v_1^1}} & \cdots & \frac{\partial^2G}{\partial v_1^{m_1}\partial \overline{v_1^{m_1}}} & \cdots & \frac{\partial^2G}{\partial v_1^{m_1}\partial \overline{v_n^1}} & \cdots & \frac{\partial^2G}{\partial v_1^{m_1}\partial \overline{v_n^{m_n}}} \\
   \vdots & \vdots & \vdots & \vdots & \vdots & \vdots & \vdots \\
   \frac{\partial^2G}{\partial v_n^1\partial \overline{v_1^1}} & \cdots & \frac{\partial^2G}{\partial v_n^1\partial \overline{v_1^{m_1}}} & \cdots & \frac{\partial^2G}{\partial v_n^1\partial \overline{v_n^1}} & \cdots & \frac{\partial^2G}{\partial v_n^1\partial \overline{v_n^{m_n}}} \\
    \vdots & \vdots & \vdots & \vdots & \vdots & \vdots & \vdots \\
   \frac{\partial^2G}{\partial v_n^{m_n}\partial \overline{v_1^1}} & \cdots & \frac{\partial^2G}{\partial v_n^{m_n}\partial \overline{v_1^{m_1}}} & \cdots & \frac{\partial^2G}{\partial v_n^{m_n}\partial \overline{v_n^1}} & \cdots & \frac{\partial^2G}{\partial v_n^{m_n}\partial \overline{v_n^{m_n}}} \\
 \end{pmatrix}.
$$
the complex fundamental tensor matrix of $F_{t,k}$ which is an $N\times N$ matrix, and denote
$$
\quad \pmb{C}=\begin{pmatrix}
                                                                                                    \pmb{C}_1 & \cdots & 0 \\
                                                                                                    \vdots & \ddots & \vdots \\
                                                                                                    0 & \cdots & \pmb{C}_n \\
                                                                                                  \end{pmatrix}
$$
where $\pmb{C}_l=(C_{\alpha_l\overline{\beta_l}})$ is an $m_l\times m_l$ Hermitian matrix with
$$
 C_{\alpha_l\overline{\beta_l}}=\mathcal{E}_l[Q_l]_{\alpha_l\overline{\beta_l}}+t(k-1)\mathcal{A}^{\frac{1}{k}-1}Q_l^{k-2}\frac{\partial Q_l}{\partial v_l^{\alpha_l}}\frac{\partial Q_l}{\partial \overline{v_l^{\beta_l}}}
$$
 for $l=1,\cdots,n$ and $\mathcal{E}_l$ are given by \eqref{bl}.

 For each fixed $l\in \{1,\cdots,n\}$, we denote
\begin{eqnarray*}
\pmb{Y}_l&=&\begin{pmatrix}
        Q_l^{k-1}\frac{\partial Q_l}{\partial v_l^1} \\
        \vdots \\
        Q_l^{k-1}\frac{\partial Q_l}{\partial v_l^{m_l}} \\
      \end{pmatrix},\quad
\pmb{Y}_l^\ast=\Big(Q_l^{k-1}\frac{\partial Q_l}{\partial \overline{v_l^1}},\cdots,Q_l^{k-1}\frac{\partial Q_l}{\partial \overline{v_l^{m_l}}}\Big)
\end{eqnarray*}
and set
\begin{eqnarray*}
\pmb{Y}=\begin{pmatrix}
    \pmb{Y}_1 \\
    \vdots \\
    \pmb{Y}_n \\
  \end{pmatrix}_{N\times 1},\quad
\pmb{Y}^\ast=\begin{pmatrix}
           \pmb{Y}_1^\ast &, \cdots ,& \pmb{Y}_n^\ast \\
         \end{pmatrix}_{1\times N}.
\end{eqnarray*}
Then the $N\times N$ matrix $\pmb{H}$ can be rewritten as
$$
\pmb{H}=\pmb{C}-t(k-1)\mathcal{A}^{\frac{1}{k}-2}\pmb{Y}\pmb{Y}^\ast.
$$

Next we derive the inverse matrix $\pmb{H}^{-1}$ of $\pmb{H}$. First we have
$$
\pmb{C}^{-1}=\begin{pmatrix}
                                                                                                    \pmb{C}_1^{-1} & \cdots & 0 \\
                                                                                                    \vdots & \ddots & \vdots \\
                                                                                                    0 & \cdots & \pmb{C}_n^{-1} \\
                                                                                                  \end{pmatrix},
$$
where $\pmb{C}_l^{-1}=(C^{\overline{\beta_l}\gamma_l})$ denotes the inverse matrix of $\pmb{C}_l$. Using Lemma 6.1 in \cite{Zh1}, we have
\begin{equation}
C^{\overline{\beta_l}\gamma_l}=\mathcal{E}_l^{-1}(v)\Bigg\{[Q_l]^{\overline{\beta_l}\gamma_l}-\frac{t(k-1)\mathcal{A}^{\frac{1}{k}-1}Q_l^{k-2}}{1+tk\mathcal{A}^{\frac{1}{k}-1}Q_l^{k-1}}\overline{v_l^{\beta_l}}v_l^{\gamma_l}
\Bigg\},\quad  l=1,\cdots, n.\label{cbl}
\end{equation}
Note that
\begin{eqnarray}
\mathcal{E}&:=&1-t(k-1)\mathcal{A}^{\frac{1}{k}-2}\pmb{Y}^\ast \pmb{C}^{-1}\pmb{Y}\nonumber\\
&=&1-t(k-1)\mathcal{A}^{\frac{1}{k}-2}\sum_{l=1}^n\frac{Q_l^{2k-1}}{1+t k\mathcal{A}^{\frac{1}{k}-1}Q_l^{k-1}}\nonumber\\
&=&\frac{1}{\mathcal{A}}\sum_{l=1}^n\frac{(1+t\mathcal{A}^{\frac{1}{k}-1}Q_l^{k-1})Q_l^k}{1+tk\mathcal{A}^{\frac{1}{k}-1}Q_l^{k-1}},
\end{eqnarray}
thus using Lemma 6.1 again we have
\begin{eqnarray*}
\pmb{H}^{-1}&=&\pmb{C}^{-1}+\frac{t(k-1)\mathcal{A}^{\frac{1}{k}-2}}{\mathcal{E}}\pmb{C}^{-1}\pmb{Y}\pmb{Y}^\ast \pmb{C}^{-1},
\end{eqnarray*}
or equivalently
\begin{eqnarray*}
H^{\overline{\beta_l}\gamma_i}
&=&C^{\overline{\beta_l}\gamma_l}\delta_{li}+\frac{t(k-1)\mathcal{A}^{\frac{1}{k}-2}}{\mathcal{E}}C^{\overline{\beta_l}\mu_l}Q_l^{k-1}\frac{\partial Q_l}{\partial v_l^{\mu_l}}C^{\overline{\mu_i}\gamma_i}Q_i^{k-1}\frac{\partial Q_i}{\partial\overline{v_i^{\mu_i}}}.
\end{eqnarray*}

Now we have
\begin{eqnarray*}
\frac{\partial^2G}{\partial \overline{v_l^{\beta_l}}\partial z_s^{\alpha_s}}
&=&-t(k-1)\mathcal{A}^{\frac{1}{k}-2}Q_s^{k-1}\frac{\partial Q_s}{\partial z_s^{\alpha_s}}Q_l^{k-1}\frac{\partial Q_l}{\partial \overline{v_l^{\beta_l}}}\\
&&+t(k-1)\mathcal{A}^{\frac{1}{k}-1}Q_l^{k-2}\frac{\partial Q_l}{\partial z_l^{\alpha_l}}\frac{\partial Q_l}{\partial\overline{v_l^{\beta_l}}}\delta_{ls}
+\mathcal{E}_l\frac{\partial^2Q_l}{\partial \overline{v_l^{\beta_l}}\partial z_l^{\alpha_l}}\delta_{ls}.
\end{eqnarray*}
Thus for each fixed $i,s\in\{1,\cdots,n\}$ and $\gamma_i\in\{1,\cdots,m_i\},\alpha_s\in\{1,\cdots,m_s\}$,
the Chern-Finsler nonlinear connection coefficients $\varGamma_{;\alpha_s}^{\gamma_i}$ associated to $F_{t,k}$ are given by
\begin{eqnarray*}
\varGamma_{;\alpha_s}^{\gamma_i}
&=&\sum_{l=1}^n\sum_{\beta_l=1}^{m_l}H^{\overline{\beta_l}\gamma_i}\frac{\partial^2G}{\partial \overline{v_l^{\beta_l}}\partial z_s^{\alpha_s}}\\
%&=&\Big\{C^{\overline{\beta_l}\gamma_i}\delta_{li}+\frac{t(k-1)\mathcal{A}^{\frac{1}{k}-2}}{\mathcal{E}}C^{\overline{\beta_l}\mu_l}Q_l^{k-1}\frac{\partial Q_l}{\partial v_l^{\mu_l}}Q_i^{k-1}\frac{\partial Q_i}{\partial\overline{v_i^{\mu_i}}}C^{\overline{\mu_i}\gamma_i}\Big\}\\
%&&\times\Big\{-t(k-1)\mathcal{A}^{\frac{1}{k}-2}Q_s^{k-1}\frac{\partial Q_s}{\partial z_s^{\alpha_s}}Q_l^{k-1}\frac{\partial Q_l}{\partial\overline{v_l^{\beta_l}}}\\
%&&+t(k-1)\mathcal{A}^{\frac{1}{k}-1}Q_l^{k-2}\frac{\partial Q_l}{\partial z_s^{\alpha_s}}\frac{\partial Q_l}{\partial\overline{v_l^{\beta_l}}}\delta_{ls}+\Mathcal{E}_l\frac{\partial^2Q_l}{\partial \overline{v_l^{\beta_l}}\partial z_s^{\alpha_s}}\delta_{ls}\Big\}\\
&=&-t(k-1)\mathcal{A}^{\frac{1}{k}-2}Q_s^{k-1}Q_i^{k-1}\frac{\partial Q_s}{\partial z_s^{\alpha_s}}\sum_{\beta_i=1}^{m_i}\frac{\partial Q_i}{\partial\overline{v_i^{\beta_i}}}C^{\overline{\beta_i}\gamma_i}\\
&&+t(k-1)\mathcal{A}^{\frac{1}{k}-1}Q_i^{k-2}\frac{\partial Q_i}{\partial z_s^{\alpha_s}}\sum_{\beta_i=1}^{m_i}C^{\overline{\beta_i}\gamma_i}\frac{\partial Q_i}{\partial\overline{v_i^{\beta_i}}}\delta_{si}
+\mathcal{E}_i\sum_{\beta_i=1}^{m_i}\frac{\partial Q_i}{\partial \overline{v_i^{\beta_i}}\partial z_i^{\alpha_i}}C^{\overline{\beta_i}\gamma_i}\delta_{si}\\
&&-\frac{t^2(k-1)^2\mathcal{A}^{\frac{2}{k}-4}}{\mathcal{E}}\Bigg\{\sum_{l=1}^nQ_l^{2(k-1)}\sum_{\beta_l,\mu_l=1}^{m_l}C^{\overline{\beta_l}\mu_l}\frac{\partial Q_l}{\partial v_l^{\mu_l}}\frac{\partial Q_l}{\partial\overline{v_l^{\beta_l}}}\Bigg\}Q_s^{k-1}\frac{\partial Q_s}{\partial z_s^{\alpha_s}}Q_i^{k-1}\sum_{\lambda_i=1}^{m_i}\frac{\partial Q_i}{\partial \overline{v_i^{\lambda_i}}}C^{\overline{\lambda_i}\gamma_i}\\
&&+\frac{t^2(k-1)^2\mathcal{A}^{\frac{2}{k}-3}}{\mathcal{E}}\Bigg\{\sum_{\beta_s,\mu_s=1}^{m_s}C^{\overline{\beta_s}\mu_s}\frac{\partial Q_s}{\partial\overline{v_s^{\beta_s}}}\frac{\partial Q_s}{\partial v_s^{\mu_s}}\Bigg\}Q_s^{2k-3}Q_i^{k-1}\frac{\partial Q_s}{\partial z_s^{\alpha_s}}\sum_{\lambda_i=1}^{m_i}\frac{\partial Q_i}{\partial\overline{v_i^{\lambda_i}}}C^{\overline{\lambda_i}\gamma_i}\\
&&+\frac{t(k-1)\mathcal{A}^{\frac{1}{k}-2}}{\mathcal{E}}\Bigg\{\mathcal{E}_s\sum_{\beta_s,\mu_s=1}^{m_s}C^{\overline{\beta_s}\mu_s}\frac{\partial Q_s}{\partial\overline{ v_s^{\beta_s}}\partial z_s^{\alpha_s}}\frac{\partial Q_s}{\partial v_s^{\mu_s}}\Bigg\}Q_s^{k-1}Q_i^{k-1}\sum_{\lambda_i=1}^{m_i}\frac{\partial Q_i}{\partial \overline{v_i^{\lambda_i}}}C^{\overline{\lambda_i}\gamma_i}.
\end{eqnarray*}
Using \eqref{cbl}, we obtain
\begin{eqnarray}
\sum_{\beta_i=1}^{m_i}\frac{\partial Q_i}{\partial\overline{v_i^{\beta_i}}}C^{\overline{\beta_i}\gamma_i}
&=&\frac{v_i^{\gamma_i}}{1+tk\mathcal{A}^{\frac{1}{k}-1}Q_i^{k-1}},\label{n1}\\
\sum_{\beta_i=1}^{m_i}\mathcal{E}_iC^{\overline{\beta_i}\gamma_i}\frac{\partial^2 Q_i}{\partial \overline{v_i^{\beta_i}}\partial z_i^{\alpha_i}}\delta_{si}
&=&\Bigg\{\hat{\varGamma}_{;\alpha_i}^{\gamma_i}-\frac{t(k-1)\mathcal{A}^{\frac{1}{k}-1}Q_i^{k-2}}{1+t k\mathcal{A}^{\frac{1}{k}-1}Q_i^{k-1}}\frac{\partial Q_i}{\partial z_i^{\alpha_i}}v_i^{\gamma_i}
\Bigg\}\delta_{si},\label{n2}\\
\sum_{l=1}^nQ_l^{2(k-1)}\sum_{\beta_l,\mu_l=1}^{m_l}C^{\overline{\beta_l}\mu_l}\frac{\partial Q_l}{\partial v_l^{\mu_l}}\frac{\partial Q_l}{\partial \overline{v_l^{\beta_l}}}
&=&\sum_{l=1}^n\frac{Q_l^{2k-1}}{1+tk\mathcal{A}^{\frac{1}{k}-1}Q_l^{k-1}},\label{n3}
\end{eqnarray}
where
\begin{equation}
\hat{\varGamma}_{;\alpha_i}^{\gamma_i}=[Q_i]^{\overline{\beta_i}\gamma_i}\frac{\partial^2 Q_i}{\partial \overline{v_i^{\beta_i}}\partial z_i^{\alpha_i}}
=[Q_i]^{\overline{\beta_i}\gamma_i}\frac{\partial^2[Q_i]_{\lambda_i\overline{\beta_i}}(z_i)}{\partial z_i^{\alpha_i}}v_i^{\lambda_i}
\label{hch}
\end{equation}
are the connection coefficients associated to the Hermitian metric $Q_i$.
Substituting \eqref{n1}-\eqref{n3} into the expression of $\varGamma_{;\alpha_s}^{\gamma_i}$, we obtain
\begin{eqnarray*}
\varGamma_{;\alpha_s}^{\gamma_i}
%&=&-t(k-1)\mathcal{A}^{\frac{1}{k}-2}Q_s^{k-1}\frac{\partial Q_s}{\partial z_s^{\alpha_s}}Q_i^{k-1}\frac{v_i^{\gamma_i}}{1+tk\mathcal{A}^{\frac{1}{k}-1}Q_i^{k-1}}\\
%&&+t(k-1)\mathcal{A}^{\frac{1}{k}-1}Q_i^{k-2}\frac{\partial Q_i}{\partial z_s^{\alpha_s}}\frac{v_i^{\gamma_i}}{1+tk\mathcal{A}^{\frac{1}{k}-1}Q_i^{k-1}}\delta_{si}\\
%&&+\Bigg\{\frac{\partial^2Q_i}{\partial\overline{v_i^{\beta_i}}\partial z_s^{\alpha_s}}[Q_i]^{\overline{\beta_i}\gamma_i}-\frac{t(k-1)\mathcal{A}^{\frac{1}{k}-1}Q_i^{k-2}}{1+t k\mathcal{A}^{\frac{1}{k}-1}Q_i^{k-1}}\frac{\partial Q_i}{\partial z_s^{\alpha_s}}v_i^{\gamma_i}
%\Bigg\}\delta_{si}\\
%&&-\frac{t^2(k-1)^2\mathcal{A}^{\frac{2}{k}-4}}{\mathcal{E}}\Big\{\sum_{l=1}^n\frac{Q_l^{2k-1}}{1+tk\mathcal{A}^{\frac{1}{k}-1}Q_l^{k-1}}\Big\}Q_i^{k-1}\Big\{\frac{v_i^{\gamma_i}}{1+t k\mathcal{A}^{\frac{1}{k}-1}Q_i^{k-1}}\Big\}\Big\{Q_s^{k-1}\frac{\partial Q_s}{\partial z_s^{\alpha_s}}\Big\}\\
%&&+\frac{t^2(k-1)^2\mathcal{A}^{\frac{2}{k}-3}}{\mathcal{E}}\Big\{\frac{v_i^{\gamma_i}}{1+tk\mathcal{A}^{\frac{1}{k}-1}Q_i^{k-1}}\Big\}
%\Big\{\frac{1}{1+t k\mathcal{A}^{\frac{1}{k}-1}Q_s^{k-1}}\Big\}
%Q_s^{2k-2}Q_i^{k-1}\Big\{\frac{\partial Q_s}{\partial z_s^{\alpha_s}}\Big\}\\
%&&+\frac{t(k-1)\mathcal{A}^{\frac{1}{k}-2}}{\mathcal{E}}\Big\{\frac{v_i^{\gamma_i}}{1+t k\mathcal{A}^{\frac{1}{k}-1}Q_i^{k-1}}\Big\}
%\Big\{\frac{1}{1+t k\mathcal{A}^{\frac{1}{k}-1}Q_s^{k-1}}\Big\}Q_s^{k-1}Q_i^{k-1}B_s(v)\frac{\partial Q_s}{\partial z_s^{\alpha_s}}\\
&=&-t(k-1)\mathcal{A}^{\frac{1}{k}-2}\frac{Q_s^{k-1}Q_i^{k-1}v_i^{\gamma_i}}{1+t k\mathcal{A}^{\frac{1}{k}-1}Q_i^{k-1}}\frac{\partial Q_s}{\partial z_s^{\alpha_s}}\\
&&+t(k-1)\mathcal{A}^{\frac{1}{k}-1}\frac{Q_i^{k-2}v_i^{\gamma_i}}{1+t k\mathcal{A}^{\frac{1}{k}-1}Q_i^{k-1}}\frac{\partial Q_i}{\partial z_i^{\alpha_i}}\delta_{si}\\
&&+\Bigg\{\hat{\varGamma}_{;\alpha_i}^{\gamma_i}-\frac{t(k-1)\mathcal{A}^{\frac{1}{k}-1}Q_i^{k-2}v_i^{\gamma_i}}{1+t k\mathcal{A}^{\frac{1}{k}-1}Q_i^{k-1}}\frac{\partial Q_i}{\partial z_i^{\alpha_i}}
\Bigg\}\delta_{si}\\
&&-\frac{t(k-1)\mathcal{A}^{\frac{1}{k}-2}}{\mathcal{E}}(1-\mathcal{E})Q_s^{k-1}\frac{Q_i^{k-1}v_i^{\gamma_i}}{1+t k\mathcal{A}^{\frac{1}{k}-1}Q_i^{k-1}}\frac{\partial Q_s}{\partial z_s^{\alpha_s}}\\
&&+\frac{t^2(k-1)^2\mathcal{A}^{\frac{2}{k}-3}}{\mathcal{E}}\frac{Q_s^{2k-2}}{1+t k\mathcal{A}^{\frac{1}{k}-1}Q_s^{k-1}}\frac{Q_i^{k-1}v_i^{\gamma_i}}{1+t k\mathcal{A}^{\frac{1}{k}-1}Q_i^{k-1}}
\frac{\partial Q_s}{\partial z_s^{\alpha_s}}\\
&&+\frac{t(k-1)\mathcal{A}^{\frac{1}{k}-2}}{\mathcal{E}}\frac{\mathcal{E}_sQ_s^{k-1}}{1+t k\mathcal{A}^{\frac{1}{k}-1}Q_s^{k-1}}\frac{Q_i^{k-1}v_i^{\gamma_i}}{1+t k\mathcal{A}^{\frac{1}{k}-1}Q_i^{k-1}}
\frac{\partial Q_s}{\partial z_s^{\alpha_s}}.
\end{eqnarray*}

Rearranging terms and using \eqref{bl}, we obtain
\begin{eqnarray}
\varGamma_{;\alpha_s}^{\gamma_i}
=\hat{\varGamma}_{;\alpha_s}^{\gamma_i}\delta_{si}=\left\{
                                                                                    \begin{array}{ll}
                                                                                      \hat{\varGamma}_{;\alpha_s}^{\gamma_s}, & \hbox{if}\quad s=i, \\
                                                                                      0, & \hbox{if}\quad s\neq i.
                                                                                    \end{array}
                                                                                  \right.
\label{hcd}
\end{eqnarray}
By \eqref{hch}, $\hat{\varGamma}_{;\alpha_s}^{\gamma_s}$ are complex linear thus holomorphic  with respect to $v_s=(v_s^1,\cdots,v_s^{m_s})$.
So that the horizontal Chern-Finsler connection coefficients $\varGamma_{\beta_l;\alpha_s}^{\gamma_i}$ associated to $F_{t,k}$ satisfy
$$
\varGamma_{\beta_l;\alpha_i}^{\gamma_s}=\frac{\partial}{\partial v_l^{\beta_l}}(\varGamma_{;\alpha_s}^{\gamma_i})\left\{
                                          \begin{array}{ll}
                                            \hat{\varGamma}_{\beta_l;\alpha_l}^{\gamma_l}(z_l), & \hbox{if}\quad l=i=s, \\
                                            0, & \hbox{otherwise},
                                          \end{array}
                                        \right.
$$
where $\hat{\varGamma}_{\beta_l;\alpha_l}^{\gamma_l}(z_l)$ is the Hermitian connection coefficients of $Q_l$ given by \eqref{hcc} for $l=1,\cdots,n$.
Thus $F_{t,k}$ is a complex Berwald metric.

\textbf{Step 4.} Since $\varGamma_{\beta_l;\alpha_l}^{\gamma_l}=\varGamma_{\alpha_l;\beta_l}^{\gamma_l}$ iff
$\hat{\varGamma}_{\beta_l;\alpha}^{\gamma_l}=\hat{\varGamma}_{\alpha_l;\beta_l}^{\gamma_l}$ for $l=1,\cdots,n$. This implies that $(M,F_{t,k})$ is a K\"ahler-Berwald manifold iff $(M_l,Q_l)$ are K\"ahler manifolds for $l=1,\cdots,n$, iff $(M,Q)$ is a reducible K\"ahler manifold.

\textbf{Setp 5.} Note that $F_{t,k}$ is a strongly convex complex Berwald metric as well as a real Berwald metric, the horizontal Chern-Finsler connection associated to $F_{t,k}$ coincides with the pull-back of the Hermitian connection associated to the usual product metric $F_{0}=Q_1+\cdots +Q_n$, and furthermore $Q_1,\cdots,Q_n$ are complete Hermitian metrics on $M_1,\cdots,M_n$ respectively iff $Q$ is a complete Hermitian metric on $M$. This implies that $(M,F_{t,k})$ is a complete strongly convex complex Berwald manifold as well as a real Berwal metric on $M$.

This completes the proof.
\end{proof}

\subsection{Curvature properties of $F_{t,k}$}

In this section, we investigate the curvature properties of $F_{t,k}$ on $M$.
We denote $K_{t,k}(z,v)$ the holomorphic sectional curvature of $F_{t,k}$ along $(z,v)\in\widetilde{M}$ and $K_l(z_l,v_l)$ the holomorphic sectional curvature of $Q_l$ along $(z_l,v_l)\in\widetilde{M_l}$.

\begin{theorem}\label{mthb} Let $(M,Q)$ be a simply connected complete reducible $C^\infty$ Hermitian manifold (resp. K\"ahler manifold) such that $(M_1,Q_1)\times \cdots\times (M_n,Q_n)$ is the de Rham decomposition of $(M,Q)$ and $F_{t,k}$ is defined by \eqref{ftk}.

 (1) If  $K_l\equiv c>0$ for $l=1,\cdots,n$, then
 \begin{equation}
\frac{(1+t)c}{n+t\sqrt[k]{n}}\leq K_{t,k}(z,v)\leq c,\quad \forall (z,v)\in\widetilde{M}.\label{gc-a}
\end{equation}

(2) If  $K_l\equiv c<0$ for $l=1,\cdots,n$, then
 \begin{equation}
c\leq K_{t,k}(z,v)\leq \frac{(1+t)c}{n+t\sqrt[k]{n}},\quad \forall (z,v)\in\widetilde{M}.\label{gc-b}
\end{equation}

(3) If  $K_l\equiv 0$ for $l=1,\cdots,n$, then
 \begin{equation}
K_{t,k}(z,v)\equiv0,\quad \forall (z,v)\in\widetilde{M}.\label{gc-b}
\end{equation}

(4) If  $K_l\equiv c$ for $l=1,\cdots,n$, then for any fixed point $(z,v)\in\widetilde{M}$ and integer $k\geq 2$ we have
\begin{eqnarray}
\lim_{t\rightarrow 0^+}K_{t,k}(z,v)&=&c\cdot \frac{\displaystyle \sum_{l=1}^nQ_l^2(z_l,v_l)}{\Big(\displaystyle \sum_{l=1}^nQ_l(z_l,v_l)\Big)^2},\\
\lim_{t\rightarrow +\infty}K_{t,k}(z,v)&=&c\cdot \frac{\displaystyle\sum_{l=1}^nQ_l^{k+1}(z_l,v_l)}{\Big(\displaystyle\sum_{l=1}^nQ_l^k(z_l,v_l)\Big)^{\frac{1}{k}+1}}.
\end{eqnarray}
\end{theorem}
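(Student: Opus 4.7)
The plan is to first derive an explicit formula for $K_{t,k}$ using the Chern--Finsler nonlinear connection already computed in Step 3 of the proof of Theorem \ref{mtha}, and then to establish the stated bounds and limits via classical inequalities on the symmetric power sums of $Q_1,\ldots,Q_n$. Throughout, set $S_j := \sum_{l=1}^n Q_l^j$ so that $\mathcal{A} = S_k$ and $G := F_{t,k}^2 = (S_1 + t S_k^{1/k})/(1+t)$.

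The key simplification is that by \eqref{hcd} the nonlinear connection satisfies $\varGamma_{;\alpha_s}^{\gamma_i} = \hat{\varGamma}_{;\alpha_s}^{\gamma_s}\delta_{si}$ and depends only on $z_s$, so $\partial \varGamma_{;\alpha_s}^{\gamma_i}/\partial \overline{z_r^{\mu_r}}$ vanishes unless $i=s=r=l$. Combined with the elementary identity $\partial G/\partial v_l^{\gamma_l} = (\mathcal{E}_l/(1+t))\,\partial Q_l/\partial v_l^{\gamma_l}$ and the very definition of $K_l$, the standard holomorphic sectional curvature formula $K_{t,k} = -(2/G^2)(\partial G/\partial v^\gamma)(\partial \varGamma_{;\alpha}^\gamma/\partial \overline{z^\mu})\,v^\alpha \overline{v^\mu}$ collapses to
\begin{equation*}
K_{t,k}(z,v) = \frac{1}{(1+t) G^2}\sum_{l=1}^n \mathcal{E}_l\, Q_l^2\, K_l(z_l,v_l).
\end{equation*}
Setting $K_l \equiv c$ and unpacking $G$ then yields the closed form
\begin{equation*}
K_{t,k}(z,v) = \frac{c\,(1+t)\bigl(S_2 + t S_k^{\frac{1}{k}-1}S_{k+1}\bigr)}{\bigl(S_1 + t S_k^{1/k}\bigr)^2}.
\end{equation*}
Parts (3) and (4) are now immediate: the case $c=0$ forces $K_{t,k}\equiv 0$, while plugging $t=0$ and letting $t\to +\infty$ in the closed form recover the two limits in (4).

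For the lower bound in part (1), I would note that $\sum_l \mathcal{E}_l Q_l = S_1 + t S_k^{1/k}$ and apply Cauchy--Schwarz with $a_l = \sqrt{\mathcal{E}_l}$, $b_l = \sqrt{\mathcal{E}_l}\,Q_l$ to get $(S_1 + t S_k^{1/k})^2 \leq (\sum_l \mathcal{E}_l)\bigl(S_2 + t S_k^{\frac{1}{k}-1}S_{k+1}\bigr)$. The power mean inequality $S_{k-1} \leq n^{1/k}S_k^{(k-1)/k}$ then yields $\sum_l \mathcal{E}_l = n + t S_k^{\frac{1}{k}-1}S_{k-1} \leq n + t\sqrt[k]{n}$, which together give $K_{t,k} \geq c(1+t)/(n+t\sqrt[k]{n})$.

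For the upper bound $K_{t,k}\leq c$, it suffices to establish $(1+t)(S_2 + t S_k^{\frac{1}{k}-1}S_{k+1}) \leq (S_1 + t S_k^{1/k})^2$, which I would verify by matching coefficients of $1$, $t$, $t^2$: (a) $S_2 \leq S_1^2$ is immediate for $Q_l \geq 0$; (b) $S_2 + S_k^{\frac{1}{k}-1}S_{k+1} \leq 2 S_1 S_k^{1/k}$, using both $S_2 \leq S_1 \max_l Q_l \leq S_1 S_k^{1/k}$ and $S_{k+1} \leq S_1 S_k$ (the latter from $\sum a_l b_l \leq (\sum a_l)(\sum b_l)$ for nonnegative terms); and (c) $S_k^{\frac{1}{k}-1}S_{k+1} \leq S_k^{2/k}$, equivalent to $S_{k+1} \leq S_k^{(k+1)/k}$, which is the standard $\ell^p$-monotonicity applied to $a_l = Q_l^k$ with $p = 1+1/k \geq 1$. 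Part (2) follows from the same two chains of inequalities, since multiplying by $c<0$ reverses both directions. The main obstacle is the middle estimate (b): it is not a direct Cauchy--Schwarz bound but hinges on the separate observations that both $S_2$ and $S_k^{\frac{1}{k}-1}S_{k+1}$ are individually dominated by $S_1 S_k^{1/k}$.
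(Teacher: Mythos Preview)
Your proof is correct and follows the same strategy as the paper: reduce $K_{t,k}$ to the closed form $c(1+t)\bigl(S_2 + tS_k^{1/k-1}S_{k+1}\bigr)\big/\bigl(S_1+tS_k^{1/k}\bigr)^2$ via \eqref{hcd}, then bound this ratio from both sides and read off the limits in (4) directly. Your explicit inequality arguments (Cauchy--Schwarz for the lower bound, and the three coefficient-wise estimates using $S_2\le S_1\max_l Q_l$, $S_{k+1}\le S_1S_k$, and $\ell^p$-monotonicity for the upper bound) are in fact more detailed than the paper's, which simply asserts both bounds as an ``elementary observation'' without further justification.
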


\begin{proof}
By the formula of holomorphic sectional curvature of a strongly pseudoconvex complex Finsler metric (see (2.5.11) in p. 100,\cite{ap}), the holomorphic sectional curvature $K_{t,k}$ of $F_{t,k}$ along $(z,v)\in\widetilde{M}$ is given by
\begin{equation}
K_{t,k}(z,v)
=-\frac{2}{G^2}\sum_{s,l,i=1}^n\sum_{\gamma_s=1}^{m_s}\sum_{\alpha_i=1}^{m_i}\sum_{\mu_l=1}^{m_l}G_{\gamma_s}\delta_{\overline{\mu_l}}(\varGamma_{;\alpha_i}^{\gamma_s})v_i^{\alpha_i}\overline{v_l^{\mu_l}},\label{cftk}
\end{equation}
where
$$\delta_{\overline{\mu_l}}=\frac{\partial}{\partial \overline{z_l^{\mu_l}}}-\sum_{r=1}^n\sum_{\lambda_r=1}^{m_r}\overline{\varGamma_{;\mu_l}^{\lambda_r}}\frac{\partial}{\partial\overline{ v_r^{\lambda_r}}}.$$
By \eqref{hch} and \eqref{hcd}, we can simply \eqref{cftk} as follows:
\begin{eqnarray*}
K_{t,k}(z,v)
&=&-\frac{2}{(1+t)G^2}\sum_{l=1}^n(1+t\mathcal{A}^{\frac{1}{k}-1}Q_l^{k-1})\frac{\partial Q_l}{\partial v_l^{\gamma_l}}\frac{\partial}{\partial\overline{z^{\mu_l}}}(\hat{\varGamma}_{;\alpha_l}^{\gamma_l})v_l^{\alpha_l}\overline{v_l^{\mu_l}}.
\end{eqnarray*}
Note that the holomorphic sectional curvature $K_l$ of $Q_l$ along any $(z_l,v_l)\in\widetilde{M_l}$ satisfies
\begin{equation}
K_l(z_l,v_l)=-\frac{2}{Q_l^2}\frac{\partial Q_l}{\partial v_l^{\gamma_l}}\frac{\partial}{\partial\overline{z^{\mu_l}}}(\hat{\varGamma}_{;\alpha_l}^{\gamma_l})v_l^{\alpha_l}\overline{v_l^{\mu_l}}\equiv c,\quad \forall l=1,\cdots,n,
\end{equation}
and we make the convention that  $K_l(z_l,v_l)=0$ whenever $v_l=\textbf{0}$ for $l=1,\cdots,n$. Then we have
\begin{eqnarray}
K_{t,k}(z,v)
&=&\frac{1}{(1+t)G^2}\sum_{l=1}^n(1+t\mathcal{A}^{\frac{1}{k}-1}Q_l^{k-1})Q_l^2K_l(\pmb{\pi}_l(z),(\pmb{\pi}_l)_\ast(v))\nonumber\\
&=&c(1+t)\cdot \frac{\displaystyle\sum_{l=1}^nQ_l^2+t\mathcal{A}^{\frac{1}{k}-1}\displaystyle\sum_{l=1}^nQ_l^{k+1}}{\left[\displaystyle\sum_{l=1}^nQ_l+t\mathcal{A}^{\frac{1}{k}}\right]^2},\label{gc}
\end{eqnarray}
which is a $C^\infty$ function of $t\in[0,+\infty)$ for any $(z,v)\in\widetilde{M}$.

Next we give an estimation of \eqref{gc}.
Note that for each $l=1,\cdots,n$ and any $(z,v)\in \widetilde{M}$, we have $Q_l=Q_l(\pmb{\pi}_l(z),\pmb{\pi}_\ast(v))=Q_l(z_l,v_l)\geq 0$ and $\mathcal{A}>0$,  so that an elementary observation yields
$$
\frac{\displaystyle\sum_{l=1}^nQ_l^2+t\mathcal{A}^{\frac{1}{k}-1}\displaystyle\sum_{l=1}^nQ_l^{k+1}}{\left[\displaystyle\sum_{l=1}^nQ_l+t\mathcal{A}^{\frac{1}{k}}\right]^2}\leq \frac{1}{1+t}
$$
and
$$
(n+\sqrt[k]{n})\left\{\sum_{l=1}^nQ_l^2+t\mathcal{A}^{\frac{1}{k}-1}\sum_{l=1}^nQ_l^{k+1}\right\}-\left[\sum_{l=1}^nQ_l+t\mathcal{A}^{\frac{1}{k}}\right]^2\geq 0.
$$
Thus we immediately obtain \eqref{gc-a}-\eqref{gc-b}.

Finally, for any fixed $(z,v)\widetilde{M}$ and integer $k\geq 2$, it follows from \eqref{gc} that
$$
\lim_{t\rightarrow +\infty}K_{t,k}(z,v)=c\cdot \frac{\displaystyle\sum_{l=1}^nQ_l^{k+1}(z_l,v_l)}{\Big(\displaystyle\sum_{l=1}^nQ_l^k(z_l,v_l)\Big)^{\frac{1}{k}+1}}.
$$

\end{proof}

\begin{remark}
Under the assumption that the holomorphic sectional curvature $K_l \equiv c$ for $l=1,\cdots,n$, the limit
$$c\cdot \frac{\displaystyle \sum_{l=1}^nQ_l^2(z_l,v_l)}{\Big(\displaystyle \sum_{l=1}^nQ_l(z_l,v_l)\Big)^2}$$
is exactly the holomorphic sectional curvature of the usual product metric $F_0^2=Q_1+\cdots+Q_n$ on $M$.
\end{remark}

\begin{remark}By the proof of Theorem \ref{mthb}, it also follows that
if the holomorphic sectional curvature $K_l$ of $Q_l$ is bounded above by a negative constant $c_l$ for $l=1,\cdots,n$, then
$$K_{t,k}\leq \frac{(1+t)c}{n+\sqrt[k]{n}}\quad \mbox{with}\quad c=\min\{c_1,\cdots,c_n\};$$
and if the holomorphic sectional curvature $K_l$ of $Q_l$ is bounded below by a positive constant $c_l$ for $l=1,\cdots,n$, then
$$K_{t,k}\geq  \frac{(1+t)c}{n+\sqrt[k]{n}} \quad \mbox{with}\quad c=\max\{c_1,\cdots,c_n\}.$$

\end{remark}

\section{Holomorphic invariant complex Finsler metrics}

\subsection{The isometry group of $(M,F_{t,k})$}

 Let $M$ be a connected complex manifold, and $\mbox{Aut}(M)$ the group of holomorphic automorphism of $M$, and $\mbox{Aut}^\circ(M)$ the connected component of the identity in $\mbox{Aut}(M)$. We equip $\mbox{Aut}(M)$ with the compact-open topology which is  the topology with neighborhood basis given by all sets of the form $\{f\in\mbox{Aut}(M):f(K)\subset U\}$, where $K\subset M$ is compact and $U\subset M$ is open. It is well-known that $\mbox{Aut}(M)$ is a topological group with respect to this topology. We denote $\mbox{Aut}_p(M)$ the isotropy group of $\mbox{Aut}(M)$ at a point $p\in M$, and $\mbox{Aut}_p^\circ(M)$ the connected components of the identity in $\mbox{Aut}(M)$.
If $G$ is a finite dimensional Lie group
of $\mbox{Aut}(M)$ and there is a continuous homomorphism $\rho: G\rightarrow \mbox{Aut}(M)$
$$
G\times M\ni (g,p)\mapsto (\rho(g))(p)\in M,
$$
then we say that $G$ acts on $M$ as a Lie transformation group through $\rho$. If $G$ acts transitively on $M$ then $M$ is called homogeneous.

\begin{definition}Let $M$ be a connected complex manifold endowed with a strongly pseudoconvex complex Finsler metric $F$.
 A holomorphic transformation $f$ on $M$ is called an isometry of $(M,F)$ if
 \begin{equation}
 F(f(z),f_\ast(v))=F(z,v),\quad \forall z\in M,\forall (z,v)\in T^{1,0}M.\label{hfm}
 \end{equation}
 The isometry group of $(M,F)$ is denote by $I(M,F)$ and the connected component of the identity is denoted by $I^\circ (M,F)$.
 \end{definition}

\begin{remark}
A holomorphic transformation $f$ on $M$ satisfying \eqref{hfm} is necessary in $\mbox{Aut}(M)$, thus we have $I(M,F)\subset\mbox{Aut}(M)$. It is well-known that if $F$ is the Bergman metric on $M$, then $F$ is $\mbox{Aut}(M)$-invariant, and $I(M,F)=\mbox{Aut}(M)$. For a general Hermitian manifold $(M,F)$, $I(M,F)$ is a Lie group. For a strongly convex complex Finsler manifold $(M,F)$ where $F$ is non-Hermitian quadratic, $I(M,F)$ is also a Lie group (cf. Theorem 3.4 in \cite{Deng}).
\end{remark}

 A complex Finsler manifold $(M,F)$ is called homogeneous if $I(M,F)$ acts transitively on $M$. It is clear that if $M$ is connected, then the connected component  $I^\circ(M,F)$ also acts transitively on $M$.

%In the following we denote $I(M,F_{t,k})$ the connected component of $\mbox{Iso}(M,F_{t,k})$ which contains the identity map and
%$I(M_l,Q_l)$ the connected component of $\mbox{Iso}(M_l,Q_l)$ which contains the identity map for $l=1,\cdots,n$.

\begin{theorem}\label{mthc}
Let $(M,Q)$ be a simply connected complete reducible $C^\infty$ Hermitian manifold (resp. K\"ahler manifold) such that
$(M,Q)=(M_1,Q_1)\times \cdots\times (M_n,Q_n)$ is the de Rham decomposition of $(M,Q)$. Let $F_{t,k}$ be defined by \eqref{ftk}.
Then

(1) $I^\circ(M,F_{t,k})=I^\circ (M,Q)\cong I^\circ (M_1,Q_1)\times \cdots \times I^\circ (M_n,Q_n)$ for any $t\in[0,+\infty)$ and integer $k\geq 2$;

(2) $(M,F_{t_1,k_1})$ is not holomorphic isometry to $(M,F_{t_2,k_2})$ for any $t_1,t_2\in [0,+\infty),t_1\neq t_2$ and integers $k_1,k_2\geq 2,k_1\neq k_2$.
\end{theorem}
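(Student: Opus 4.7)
My proposal relies on the key identification established inside the proof of Theorem \ref{mtha}: the horizontal Chern-Finsler connection of $F_{t,k}$ coincides with the pull-back of the Hermitian connection $\nabla^Q$ of the background product metric $Q=Q_1+\cdots+Q_n$, and the restriction of $F_{t,k}^2$ to a vector tangent only to a single factor $M_l$ reduces to $Q_l$. These two facts let me reduce both assertions to the classical rigidity of the de Rham splitting.

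For (1), the inclusion $\prod_{l=1}^n I^\circ(M_l,Q_l)\subseteq I^\circ(M,F_{t,k})$ is immediate from \eqref{ftk}: a product of $Q_l$-isometries preserves each summand $Q_l$ appearing in the formula, and so preserves $F_{t,k}$. For the reverse inclusion, any $\varphi\in I^\circ(M,F_{t,k})$ automatically preserves the canonical Chern-Finsler connection of $F_{t,k}$, and hence $\nabla^Q$; therefore $\varphi_*$ commutes with $\nabla^Q$-parallel transport and preserves every $\nabla^Q$-parallel subbundle of $T^{1,0}M$. By the de Rham theorem, the irreducible $\nabla^Q$-parallel subbundles are precisely the factor distributions $T^{1,0}M_l$, so $\varphi_*$ permutes them; continuity inside the identity component forces this permutation to be the identity, hence $\varphi_*(T_{p_l}^{1,0}M_l)=T_{\varphi(p)_l}^{1,0}M_l$. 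Consequently $\varphi$ splits as $(\varphi_1,\ldots,\varphi_n)$, and restricting the isometry condition for $F_{t,k}$ to vectors tangent only to $M_l$ (where $F_{t,k}^2$ equals $Q_l$) forces each $\varphi_l\in I^\circ(M_l,Q_l)$. Combined with the classical isomorphism $I^\circ(M,Q)\cong\prod_{l=1}^n I^\circ(M_l,Q_l)$, this gives the chain of equalities.

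For (2), a biholomorphic isometry $\Phi:(M,F_{t_1,k_1})\to (M,F_{t_2,k_2})$ preserves the common background connection $\nabla^Q$, and the same argument shows $\Phi$ respects the de Rham decomposition and restricts to a $Q_l$-isometry on each factor. The equation $F_{t_2,k_2}(\Phi(p),\Phi_*(v))=F_{t_1,k_1}(p,v)$ then becomes the pointwise functional identity
\begin{equation*}
\frac{\sum_{l=1}^n Q_l(v_l)+t_1\bigl(\sum_{l=1}^n Q_l(v_l)^{k_1}\bigr)^{1/k_1}}{1+t_1}=\frac{\sum_{l=1}^n Q_l(v_l)+t_2\bigl(\sum_{l=1}^n Q_l(v_l)^{k_2}\bigr)^{1/k_2}}{1+t_2}
\end{equation*}
for every $v=v_1+\cdots+v_n$. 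Specializing to $v$ with $Q_1(v_1)=r$, $Q_2(v_2)=s$ and $Q_j(v_j)=0$ for $j\geq 3$, and Taylor-expanding in $r$ about $r=0$ with $s$ fixed (since $k_1,k_2\geq 2$, the $k$-th root contributes only at order $r^{k_i}\geq r^2$, while the linear side contributes $r/(1+t_i)$), matching the linear coefficients of $r$ forces $t_1=t_2$. With $t:=t_1=t_2$ and $t>0$, setting $r=s=1$ in the resulting identity $t(r^{k_1}+s^{k_1})^{1/k_1}=t(r^{k_2}+s^{k_2})^{1/k_2}$ yields $2^{1/k_1}=2^{1/k_2}$, hence $k_1=k_2$, a contradiction.

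The main obstacle is the structural step of showing an $F_{t,k}$-isometry respects the de Rham splitting. A direct characterization of the factor subspaces $T_{p_l}^{1,0}M_l$ purely in terms of $F_{t,k}$ at a point is delicate, for instance a $1$-dimensional ``diagonal'' subspace of two different factors inherits a Hermitian restriction, so the naive ``maximal Hermitian subspace'' criterion fails. My approach sidesteps this entirely by appealing to the intrinsic Chern-Finsler connection, whose coincidence with $\nabla^Q$ was already established in Step 3 of the proof of Theorem \ref{mtha}; the remainder of both parts is then bookkeeping together with the elementary asymptotic comparison.
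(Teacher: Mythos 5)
Your proposal is correct, and it is in fact more complete than the paper's own argument, which takes a different and shorter route. For (1) the paper simply writes every $f\in I^\circ(M,F_{t,k})$ as $f=(f_1,\ldots,f_n)$ and composes with the projections $\pmb{\pi}_l$ and embeddings $\pmb{i}_l$ (which are shown to be isometries thanks to the normalizing factor $\frac{1}{\sqrt{1+t}}$), and for (2) it only checks that the identity map is not an isometry between $F_{t_1,k_1}$ and $F_{t_2,k_2}$; the nontrivial structural fact that an $F_{t,k}$-isometry must respect the product decomposition is left implicit. You supply exactly that missing step: an isometry preserves the canonical Chern--Finsler connection, which by Step 3/5 of the proof of Theorem \ref{mtha} descends to the Hermitian connection $\nabla^Q$ of the product metric, so the isometry is an affine transformation of $(M,\nabla^Q)$ and splits along the de Rham factors. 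One refinement: when two factors have isomorphic holonomy representations, the irreducible $\nabla^Q$-parallel subbundles are not literally only the $T^{1,0}M_l$ (diagonal invariant subspaces can occur), so rather than re-deriving the splitting from scratch you should invoke Hano's theorem (Theorem 1 in \cite{Hano}, already cited in the paper), which states precisely that the identity component of the affine group of a de Rham product decomposes as the product of the affine groups of the factors; your connection-preservation step places $\varphi$ in that group, and the rest of your argument (restriction to factor-tangent vectors where $F_{t,k}^2=Q_l$, and the Taylor expansion in $r$ forcing $t_1=t_2$ and then $2^{1/k_1}=2^{1/k_2}$) goes through verbatim. Your treatment of (2) is the genuinely valuable addition, since it rules out \emph{all} holomorphic isometries rather than just the identity, at the cost of the same permutation argument (which is harmless there because the functional identity is symmetric in the $Q_l(v_l)$).
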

\begin{proof} By Theorem 1 in \cite{Hano}, we have  $I^\circ (M,Q)\cong I^\circ(M_1,Q_1)\times\cdots\times I^\circ (M_n,Q_n)$. Next it is easy to check that
%$$
%F_{t,k}(z,(\pmb{\pi}_l)^\ast X)=Q_l(\pmb{\pi}_l(z),X)=Q_l(\pmb{\pi}_l(z),(\pmb{\pi}_l)_\ast(\pmb{\pi}_l)^\ast X),\quad \forall X\in T_{\pmb{\pi}_l(z)}^{1,0}M_l,\forall l=1,\cdots,n,
%$$
%which implies that
the the natural projection $\pmb{\pi}_l: (M,F_{t,k})\rightarrow (M_l,Q_l)$ and embedding $\pmb{i}_l:(M_l,Q_l)\rightarrow (M,F_{t,k})$ are holomorphic isometries for any $l=1,\cdots,n$.
Thus for each $f=(f_1,\cdots,f_n)\in I^\circ(M,F_{t,k})$ and $(z,v)\in\widetilde{M}$, we have
$$
F(f(z),f_\ast (v))=Q_l(\pmb{\pi}_l(f(z)),(\pmb{\pi}_l)_\ast (f_\ast(v))),
$$
which implies that $f_l=\pmb{\pi}_l\circ f:(M,F_{t,k})\rightarrow (M_l,Q_l)$ are holomorphic isometries for any $l=1,\cdots,n$.
So that for each $f\in I^\circ(M,F_{t,k})$, we have a unique $\tilde{f}:=(\tilde{f}_l,\cdots,\tilde{f}_n)$ with $\tilde{f}_l=\pmb{\pi}_l\circ f\circ \pmb{i}_l\in I^\circ(M_l,Q_l)$ for $l=1,\cdots,n$.

Conversely, for each $g_l\in I^\circ(M_l,Q_l)$ for $l=1,\cdots,n$ we set $g=(g_1,\cdots,g_n)$, which is defined in a natural way on $M$
and it is clear that $g\in I^\circ(M,F_{t,k})$. Thus we have established an one-to-one correspondence between the set $I^\circ(M,F_{t,k})$ and
$I^\circ(M_1,Q_1)\times \cdots\times I^\circ(M_n,Q_n)$. The assertion (1) follows.

For the assertion (2), it suffice to consider the identity map $\mbox{id}:M\rightarrow M$. Then $F_{t_1,k_1}(z,v)=F_{t_2,k_2}(z,v)$ for any $(z,v)\in T^{1,0}M$ iff $t_1=t_2$ and $k_1=k_2$.
\end{proof}

\subsection{Holomorphic invariant complex Finsler metrics on the polydisks}

It is well-known that intrinsic metrics on complex manifolds $M$ such as the Carath$\acute{\mbox{e}}$odory pseudometric, the Kobayashi pseudometric \cite{Ko}, the Sibony metric \cite{Si} and the Bergman metric, are all $\mbox{Aut}(M)$-invariant. They are all complex Finsler metrics,  besides the Bergman metric however, they are in general not smooth. It is an interesting question to ask whether there are $\mbox{Aut}(M)$-invariant strongly pseudoconvex complex Finsler metrics which are not Hermitian quadratic? The existence of nontrivial examples may possibly enhance the study of holomorphic invariant metrics on complex manifold from the view point of differential geometry.

\begin{theorem}\label{phi}
Let $P_n$ be the unit polydisk in $\mathbb{C}^n$ with $n\geq 2$. For any fixed $t\in[0,+\infty)$ and integer $k\geq 2$, we define
\begin{eqnarray}
F_{t,k}(z,v)=\frac{1}{\sqrt{1+t}}\sqrt{\sum_{l=1}^n\frac{|v^l|^2}{(1-|z^l|^2)^2}+t\sqrt[k]{\sum_{l=1}^n\frac{|v^l|^{2k}}{(1-|z^l|^2)^{2k}}}},\quad \forall (z,v)\in T^{1,0}P_n\label{pm}
\end{eqnarray}
or
equivalently
\begin{equation}
F_{t,k}(z,dz)=\frac{1}{\sqrt{1+t}}\sqrt{\sum_{l=1}^n\frac{|dz^l|^2}{(1-|z^l|^2)^2}+t\sqrt[k]{\sum_{l=1}^n\frac{|dz^l|^{2k}}{(1-|z^l|^2)^{2k}}}},\quad \forall z\in P_n.\label{dpm}
\end{equation}
Then

(1) $F_{t,k}$ is an $\mbox{Aut}(P_n)$-invariant complete strongly convex K\"ahler-Berwald metric  with holomorphic sectional curvatures $\in\left[-4, -\frac{4(1+t)}{n+t\sqrt[k]{n}}\right]$;

(2) $F_{t,k}$ has the same geodesic (as sets) as that of the Bergman metric on $P_n$ and the geodesic distance of $Z_1=(z_1^1,\cdots,z_1^n),Z_2=(z_2^1,\cdots,z_2^n)\in P_n$  with respect to $F_{t,k}$ is given by
\begin{eqnarray*}
\sigma(Z_1,Z_2)&=&
\frac{1}{2\sqrt{1+t}}\sqrt{\sum_{l=1}^n\log^2\Bigg(\frac{1+\Big|\frac{z_2^l-z_1^l}{1-\overline{z_1^l}z_2^l}\Big|}{1-\Big|\frac{z_2^l-z_1^l}{1-\overline{z_1^l}z_2^l}\Big|}\Bigg)
+t\sqrt[k]{\sum_{l=1}^n\log^{2k} \Bigg(\frac{1+\Big|\frac{z_2^l-z_1^l}{1-\overline{z_1^l}z_2^l}\Big|}{1-\Big|\frac{z_2^l-z_1^l}{1-\overline{z_1^l}z_2^l}\Big|}\Bigg)}}.
\end{eqnarray*}
\end{theorem}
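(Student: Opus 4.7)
The plan is to recognize $P_n$ as the product of $n$ copies of the Poincar\'e disk, so that this theorem reduces to specializations of the structural results already established. Write $Q_l(z^l,v^l)=\frac{|v^l|^2}{(1-|z^l|^2)^2}$ for the Poincar\'e metric on the $l$-th factor $\triangle$. Each $(\triangle,Q_l)$ is a simply connected, complete, irreducible K\"ahler manifold of constant holomorphic sectional curvature $-4$, and $(P_n,\sum_l Q_l)$ is its de Rham decomposition. Therefore Theorem \ref{mtha} immediately yields that $F_{t,k}$ is a complete strongly convex K\"ahler--Berwald metric, and Theorem \ref{mthb} with $c=-4$ gives the claimed holomorphic sectional curvature bounds $K_{t,k}\in\bigl[-4,\,-\tfrac{4(1+t)}{n+t\sqrt[k]{n}}\bigr]$.

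For $\mathrm{Aut}(P_n)$-invariance I would invoke the classical structure theorem: every $\varphi\in\mathrm{Aut}(P_n)$ is of the form $\varphi(z)=(\varphi_1(z^{\sigma(1)}),\ldots,\varphi_n(z^{\sigma(n)}))$ for some permutation $\sigma\in S_n$ and $\varphi_l\in\mathrm{Aut}(\triangle)$. Since the Poincar\'e metric on $\triangle$ is $\mathrm{Aut}(\triangle)$-invariant, each $Q_l$ is preserved by the factorwise action, and both expressions $\sum_l Q_l$ and $\sum_l Q_l^k$ are symmetric in their arguments, so they are untouched by $\sigma$. Hence $F_{t,k}\circ\varphi_*=F_{t,k}$ for every $\varphi\in\mathrm{Aut}(P_n)$.

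For the geodesic statement I would use Step 2 of the proof of Theorem \ref{mtha}, which shows that the real Berwald connection of $F_{t,k}$ is block-diagonal and coincides in each block with the Levi--Civita connection of $Q_l$; equivalently it is the pull-back of the Levi--Civita connection of the product Hermitian metric $F_0^2=\sum_l Q_l$. Consequently the unparametrized geodesics of $F_{t,k}$ coincide with those of the Bergman (product Poincar\'e) metric and are exactly the products of Poincar\'e geodesics in the factors. To compute the distance between $Z_1=(z_1^1,\ldots,z_1^n)$ and $Z_2=(z_2^1,\ldots,z_2^n)$, parametrize the geodesic $\gamma(s)=(\gamma_1(s),\ldots,\gamma_n(s))$ on $s\in[0,1]$ so that each $\gamma_l(s)$ traces the Poincar\'e geodesic from $z_1^l$ to $z_2^l$ at constant Poincar\'e speed $\rho_l:=\frac{1}{2}\log\frac{1+r_l}{1-r_l}$, where $r_l=\bigl|\frac{z_2^l-z_1^l}{1-\overline{z_1^l}z_2^l}\bigr|$. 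Then $Q_l(\gamma_l(s),\dot\gamma_l(s))\equiv\rho_l^2$ is constant in $s$, so the integrand $F_{t,k}(\gamma,\dot\gamma)$ is constant on $[0,1]$, giving length $\frac{1}{\sqrt{1+t}}\sqrt{\sum_l\rho_l^2+t\sqrt[k]{\sum_l\rho_l^{2k}}}$. Substituting the expression for $\rho_l$ and pulling the factor $\tfrac{1}{2}$ out of the square root produces exactly the stated formula for $\sigma(Z_1,Z_2)$.

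The main obstacle I anticipate is justifying that the curve constructed above is indeed a \emph{minimizing} geodesic of $F_{t,k}$ and not merely a geodesic of the auxiliary product Hermitian metric, so that its $F_{t,k}$-length equals the intrinsic distance. Because $F_{t,k}$ is a complete strongly convex real Berwald metric and its Berwald connection agrees with the Levi--Civita connection of a simply connected complete K\"ahler manifold whose holomorphic sectional curvature is negative, a Cartan--Hadamard argument in the Berwald setting (or, alternatively, the $\mathrm{Aut}(P_n)$-invariance together with the reduction $Z_1=0$, where the candidate curve lies in the totally real subspace $\{(x^1,\ldots,x^n):x^l\in(-1,1)\}$ after composing with factorwise rotations) shows that this geodesic realizes the $F_{t,k}$-distance. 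This completes the proof.
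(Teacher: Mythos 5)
Your proposal follows essentially the same route as the paper: reduce to Theorems \ref{mtha} and \ref{mthb} via the product decomposition of $P_n$ into Poincar\'e disks, verify $\mathrm{Aut}(P_n)$-invariance from the structure theorem for $\mathrm{Aut}(P_n)$ together with the permutation symmetry of $\sum_l Q_l$ and $\sum_l Q_l^k$, and compute the distance by integrating $F_{t,k}$ along the common geodesic (parametrized at constant Poincar\'e speed in each factor) before transporting the base point to the origin by an automorphism. Your closing remark about minimality is a point the paper itself passes over silently (it simply cites the classical geodesics of the Bergman metric on $P_n$), so flagging it is reasonable but does not change the argument.
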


\begin{proof}
(1) Note that $P_n=M_1\times \cdots\times M_n$ is the product manifold of $n$ copies of the unit disk $M_l=\{z^l\in\mathbb{C}||z^l|<1\}$ equipping with the Poincar$\acute{\mbox{e}}$ metric $Q_l(z^l,dz^l)=\frac{|dz^l|^2}{(1-|z^l|^2)^2}$ for $l=1,\cdots,n$ with Gauss curvature $-4$. By Theorem \ref{mtha} and \ref{mthb}, it suffice to prove that $F_{t,k}$ is $\mbox{Aut}(P_n)$-invariant.
That is, $F_{t,k}(w,dw)=F_{t,k}(z,dz)$ for any $w=f(z)=(f_1(z),\cdots,f_n(z))\in \mbox{Aut}(P_n)$.  Note that according to Proposition $3$ of  Chapter $5$ in \cite{N}, every $w=f(z)\in\mbox{Aut}(P_n)$ must be of the form:
$$
f(z)=\Big(e^{i\theta_1}\frac{z^{\sigma(1)}-a_1}{1-\overline{a_1}z^{\sigma(1)}},\cdots,e^{i\theta_n}\frac{z^{\sigma(n)}-a_n}{1-\overline{a_n}z^{\sigma(n)}}\Big),
$$
where $\theta_1,\cdots,\theta_n\in \mathbb{R}$, $a=(a_1,\cdots,a_n)\in P_n$ and $\sigma:\{1,\cdots,n\}\rightarrow\{1,\cdots,n\}$ is any permutation. That is,
$$
f_l(z)=e^{i\theta_l}\frac{z^{\sigma(l)}-a_l}{1-\overline{a_l}z^{\sigma(l)}},\quad l=1,\cdots,n.
$$

Now we have
\begin{eqnarray*}
F_{t,k}(w,dw)&=&\frac{1}{\sqrt{1+t}}\sqrt{\sum_{l=1}^n\frac{|dw^l|^2}{(1-|w^l|^2)^2}+t\sqrt[k]{\sum_{l=1}^n\frac{|dw^l|^{2k}}{(1-|w^l|^2)^{2k}}}}\\
&=&\frac{1}{\sqrt{1+t}}\sqrt{\sum_{l=1}^n\frac{|df_l(z)|^2}{(1-|f_l(z)|^2)^2}+t\sqrt[k]{\sum_{l=1}^n\frac{|df_l(z)|^{2k}}{(1-|f_l(z)|^2)^{2k}}}}\\
&=&\frac{1}{\sqrt{1+t}}\sqrt{\sum_{l=1}^n\frac{|dz^{\sigma(l)}|^2}{(1-|dz^{\sigma(l)}|^2)^2}+t\sqrt[k]{\sum_{l=1}^n\frac{|dz^{\sigma(l)}|^{2k}}{(1-|z^{\sigma(l)}|^2)^{2k}}}}\\
&=&\frac{1}{\sqrt{1+t}}\sqrt{\sum_{l=1}^n\frac{|dz^l|^2}{(1-|z^l|^2)^2}+t\sqrt[k]{\sum_{l=1}^n\frac{|dz^l|^{2k}}{(1-|z^l|^2)^{2k}}}}\\
&=&F_{t,k}(z,dz),
\end{eqnarray*}
 which completes the assertion (1).

(2) Note that $F_{t,k}$ and the Bergman metric $F_{0}^2=Q_1+\cdots+Q_n$ on $P_n$ obey the same system of geodesic equations, i.e.,
$$
\frac{d^2z^l(s)}{ds^2}+\frac{2\overline{z^l(s)}}{1-|z^l(s)|^2}\frac{dz^l(s)}{ds}\frac{dz^l(s)}{ds}=0,\quad l=1,\cdots,n
$$
and
\begin{eqnarray*}
z^l(s)&=&\frac{e^{a_ls}-1}{e^{a_ls}+1},\\
a_l&=&\frac{1}{s_0}\log \frac{1+r_0^l}{1-r_0^l},\quad s\in\mathbb{R}
\end{eqnarray*}
is a geodesic connecting $0$ and $z_0=(r_0^1,\cdots,r_0^n)$ in $P_n$ such that $z^l(0)=0$ and $z^l(s_0)=r_0^l$ for $l=1,\cdots,n$ (cf. \cite{Look1,Look2}). Thus the geodesic distance between $0$ and $z_0$ with respect to $F_{t,k}$ is given by
\begin{eqnarray*}
\sigma(0,z_0)&=&\int_0^{s_0}F\Big(z^1(s),\cdots,z^n(s),\frac{dz^1(s)}{ds},\cdots,\frac{dz^n(s)}{ds}\Big)ds.
\end{eqnarray*}
Note that
\begin{eqnarray*}
\sum_{l=1}^n\Bigg(\frac{\frac{d}{ds}|z^l(s)|}{1-|z^l(s)|^2}\Bigg)^{2k}&=&\sum_{l=1}^n\Bigg(\frac{1}{2}\frac{d}{ds}\log \frac{1+|z^l(s)|}{1-|z^l(s)|}\Bigg)^{2k}\\
&=&\sum_{l=1}^n\Bigg(\frac{1}{2}\frac{d}{ds}\log \frac{1+\Big(\frac{e^{a_ls}-1}{e^{a_ls}+1}\Big)}{1-\Big(\frac{e^{a_ls}-1}{e^{a_ls}+1}\Big)}\Bigg)^{2k}\\
&=&\frac{1}{2^{2k}}\sum_{l=1}^na_l^{2k}.
\end{eqnarray*}
Thus
\begin{eqnarray*}
\sigma(0,z_0)&=&\frac{1}{2\sqrt{1+t}}\int_0^{s_0}\sqrt{\sum_{l=1}^na_l^2+t\sqrt[k]{\sum_{l=1}^na_l^{2k}}}ds\\
&=&\frac{1}{2\sqrt{1+t}}\sqrt{\sum_{l=1}^na_l^2+t\sqrt[k]{\sum_{l=1}^na_l^{2k}}}s_0\\
&=&\frac{1}{2\sqrt{1+t}}\sqrt{\sum_{l=1}^n\log^2\Big(\frac{1+r_0^l}{1-r_0^l}\Big)+t\sqrt[k]{\sum_{l=1}^n\log^{2k} \Big(\frac{1+r_0^l}{1-r_0^l}\Big)}}.
\end{eqnarray*}
Now let $Z_1=(z_1^1,\cdots,z_1^n),Z_2=(z_2^1,\cdots,z_2^n)\in \triangle^n$ and
$$W(z)=\Big(e^{i\theta_1}\frac{z^1-z_1^1}{1-\overline{z_1^1}z^1},\cdots,e^{i\theta_n}\frac{z^n-z_1^n}{1-\overline{z_1^n}z^n}\Big)\in\mbox{Aut}(\triangle^n)$$ such that  $W(Z_1)=0$ and $W(Z_2)=z_0$
 then
\begin{eqnarray*}
\sigma(Z_1,Z_2)&=&
\frac{1}{2\sqrt{1+t}}\sqrt{\sum_{l=1}^n\log^2\Bigg(\frac{1+\Big|\frac{z_2^l-z_1^l}{1-\overline{z_1^l}z_2^l}\Big|}{1-\Big|\frac{z_2^l-z_1^l}{1-\overline{z_1^l}z_2^l}\Big|}\Bigg)
+t\sqrt[k]{\sum_{l=1}^n\log^{2k} \Bigg(\frac{1+\Big|\frac{z_2^l-z_1^l}{1-\overline{z_1^l}z_2^l}\Big|}{1-\Big|\frac{z_2^l-z_1^l}{1-\overline{z_1^l}z_2^l}\Big|}\Bigg)}}.
\end{eqnarray*}
This completes the proof.
\end{proof}

\begin{proposition}[\cite{Cartan}]\label{C}
Let $D=D_1\times D_2, D_j\subset\mathbb{C}^{n_j},n=n_1+n_2,n_1,n_2>0,$ be a bounded domain. Then any $f\in\mbox{Aut}(D)$ which belongs to the connected component of the identity in $\mbox{Aut}(D)$ is of the form
$$f=f_1\times f_2,$$
where $f_j\in\mbox{Aut}(D_j)$.

\end{proposition}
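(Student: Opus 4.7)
The overall plan is to realize the result via the canonical Bergman metric on $D$, converting the statement about holomorphic automorphisms into one about holomorphic isometries of a K\"ahler product, and then invoking a de Rham/Hano--type splitting of the isometry group --- in the same spirit as the argument already used in the proof of Theorem 4.3 above.

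First I would show that the Bergman metric of $D=D_1\times D_2$ is a K\"ahler product metric. Since $D$ is bounded, the Bergman kernel exists, and an orthonormal basis of the Bergman space of $D$ is furnished by products of orthonormal bases of the Bergman spaces of $D_1$ and $D_2$. Summing yields the factorization $K_D((z,w),(z',w'))=K_{D_1}(z,z')\,K_{D_2}(w,w')$, whence $\log K_D=\pi_1^*\log K_{D_1}+\pi_2^*\log K_{D_2}$. Taking $\sqrt{-1}\,\partial\bar\partial$ gives $g_B=\pi_1^*g_{B,1}\oplus\pi_2^*g_{B,2}$, so $(D,g_B)$ is a K\"ahler product. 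By the biholomorphic invariance of the Bergman metric, every $\varphi\in\mathrm{Aut}(D)$ is a holomorphic isometry of $(D,g_B)$, so in particular $\mathrm{Aut}^\circ(D)\subset I^\circ(D,g_B)$.

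Next I would extract the product form of elements of $\mathrm{Aut}^\circ(D)$. The complementary distributions $\pi_1^*TD_1$ and $\pi_2^*TD_2$ are parallel with respect to the Levi-Civita connection of the product metric $g_B$, and their integral manifolds $D_1\times\{w\}$, $\{z\}\times D_2$ are totally geodesic. Any one-parameter subgroup of holomorphic isometries starting at the identity must preserve parallel distributions --- a holonomy-theoretic fact that is essentially the content of Hano's theorem cited in Theorem 4.3 above. Hence every $\varphi\in\mathrm{Aut}^\circ(D)$ maps $D_j$-slices to $D_j$-slices, which means the first coordinate of $\varphi(z,w)$ depends only on $z$ and the second only on $w$. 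Writing $\varphi(z,w)=(f_1(z),f_2(w))$ and observing that $\varphi^{-1}$ must be of the same product shape, we conclude that $f_j\in\mathrm{Aut}(D_j)$ as required.

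The main obstacle is justifying the step ``parallel distributions are preserved by one-parameter subgroups of isometries'' without the completeness or simple connectedness hypotheses of the classical de Rham decomposition, which are not automatic for a bounded domain equipped with its Bergman metric. I would circumvent this by arguing infinitesimally on the Lie algebra $\mathfrak{g}$ of $\mathrm{Aut}^\circ(D)$: each complete holomorphic Killing field $X$ on $D$ splits along $TD=\pi_1^*TD_1\oplus\pi_2^*TD_2$ as $X=X^{(1)}+X^{(2)}$, and the block-diagonal form of $\nabla$ combined with the Killing equation $\nabla X+(\nabla X)^*=0$ forces $X^{(1)}$ to be independent of $w$ and $X^{(2)}$ to be independent of $z$. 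Integrating these split fields and using that $\mathrm{Aut}^\circ(D)$ is generated by their flows then yields the global decomposition $\varphi=f_1\times f_2$.
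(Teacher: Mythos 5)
The paper does not prove this proposition at all: it is quoted from Cartan's 1936 paper (the reference \cite{Cartan}), whose argument is an elementary one based on H.~Cartan's uniqueness/linearization theorems and the Lie-group structure of $\mbox{Aut}(D)$ for bounded $D$, not on the Riemannian geometry of the Bergman metric. So your proposal is necessarily a different route, and it has to be judged on its own. The first stage is fine: the Hilbert-space tensor product decomposition gives $K_{D_1\times D_2}=K_{D_1}K_{D_2}$, hence $g_B$ is the K\"ahler product metric, and every automorphism is an isometry of $g_B$. The problem is the second stage, which is exactly where you locate ``the main obstacle,'' and your proposed fix does not work. The assertion that the Killing equation together with the block-diagonal form of $\nabla$ forces a Killing field to split as $X^{(1)}(z)+X^{(2)}(w)$ is false as a formal deduction: on $\mathbb{C}\times\mathbb{C}$ with the flat product metric, the holomorphic Killing field generating $(z,w)\mapsto(\cos t\,z-\sin t\,w,\ \sin t\,z+\cos t\,w)$ has $\nabla X$ skew-symmetric with a nonzero \emph{constant} off-diagonal block, so the Killing equation is satisfied while the factors are mixed. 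What the Killing identity $\nabla^2 X=-R(X,\cdot)\cdot$ actually gives on a product is only that the off-diagonal block of $\nabla X$ is a \emph{parallel} section of $\mbox{Hom}(\pi_1^\ast TD_1,\pi_2^\ast TD_2)$; to conclude that it vanishes you must show that the Bergman metrics of bounded domains admit no such nonzero parallel mixing tensors (equivalently, that at least one factor carries no parallel vector field for its Bergman metric). That is a genuine, nontrivial input about Bergman metrics which you neither state nor prove, and it cannot be replaced by the classical de Rham uniqueness theorem, because for an \emph{arbitrary} bounded domain the Bergman metric need not be complete, the domain need not be simply connected, and the factors $(D_j,g_{B,j})$ need not be irreducible --- the three hypotheses on which the holonomy-theoretic ``isometries preserve the de Rham factors'' statement rests.

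Concretely: your argument as written proves the proposition only under the additional hypotheses that each $(D_j,g_{B,j})$ is complete, simply connected, and that no nonzero parallel homomorphism $\pi_1^\ast TD_1\to\pi_2^\ast TD_2$ exists. These hold in the cases the paper actually needs (products of balls and polydisks, where the factors are complete irreducible Hermitian symmetric), but they are not established for general bounded $D_1,D_2$, so the proof of the proposition as stated is incomplete. If you want to keep the Bergman-metric strategy, you must either (i) prove the no-parallel-mixing-tensor lemma for Bergman metrics of bounded domains, or (ii) abandon the connection-theoretic step and follow Cartan's original, more elementary argument (an openness/closedness argument in $\mbox{Aut}(D)$ using the uniqueness theorem), which avoids all completeness and holonomy issues.
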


%\begin{corollary}
%Let $P_n$ be the unit polydisk in $\mathbb{C}^n$, $t\in[0,+\infty)$, $k\geq 2$ an integer. Then for any $j\in\{1,2,\cdots,n-1\}$,
%\begin{eqnarray*}
%F_{t,k}(z,v)&=&\sqrt{\sum_{i=1}^n\frac{|v^i|^2}{(1-|z^i|^2)^2}+t\sqrt[k]{\Big(\sum_{i=1}^j\frac{|v^i|^{2}}{(1-|z^i|^2)^{k}}\Big)^k+\Big(\sum_{i=j+1}^n\frac{|v^i|^{2}}{(1-|z^i|^2)^{2}}\Big)^k}}
%\end{eqnarray*}
%are complete strongly convex K\"ahler-Berwald metrics on $P_n$.
%\end{corollary}
By Theorem \ref{nb}, the unit ball $B_n$ in $\mathbb{C}^n$ does not admit any $\mbox{Aut}(B_n)$-invariant strongly pseudoconvex complex Finsler metric other
 than a constant multiple of the Poincar$\acute{\mbox{e}}$-Bergman metric. The product of open unit balls $\pmb{B}$, however, admits infinite many $\mbox{Aut}(\pmb{B})$-invariant strongly convex K\"ahler-Berwald metrics which are not necessary Hermitian quadratic. More precisely we have
\begin{proposition}\label{prb}
Let $\pmb{B}=B_{1}\times \cdots\times B_{n}\subset \mathbb{C}^N$ with $N=m_1+\cdots +m_n$ be the product of unit balls $B_l\subset\mathbb{C}^{m_l}$
 endowed with the Poincar$\acute{\mbox{e}}$-Bergman metrics
\begin{eqnarray*}
Q_l(z_l,v_l)=\frac{\displaystyle\sum_{\alpha_l=1}^{m_l}|v_l^{\alpha_l}|^2}{1-\Big(\displaystyle\sum_{\alpha_l=1}^{m_l}|z_l^{\alpha_l}|^2\Big)}+\frac{\displaystyle\sum_{\alpha_l=1}^{m_l}z_l^{\alpha_l}\overline{v_l^{\alpha_l}}}{\Big[1-\Big(\displaystyle\sum_{\alpha_l=1}^{m_l}|z_l^{\alpha_l}|^2\Big)\Big]^2},
\quad \forall v_l\in T_{z_l}^{1,0}B_l
\end{eqnarray*}
for $l=1,\cdots,n$.
Then for any fixed $t\in[0,+\infty)$ and integer $k\geq 2$,
\begin{eqnarray}
F_{t,k}(z,v)=\frac{1}{\sqrt{1+t}}\sqrt{\sum_{l=1}^nQ_l(z_l,v_l))+t\sqrt[k]{\sum_{l=1}^nQ_l^k(z_l,v_l)}},\quad \forall (z,v)\in T^{1,0}\pmb{B}\label{pbm}
\end{eqnarray}
is a complete strongly convex K\"ahler-Berwald metric with holomorphic sectional curvatures $\in\left[-4,-\frac{4(1+t)}{n+t\sqrt[k]{n}}\right]$.
\end{proposition}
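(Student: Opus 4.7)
The plan is to reduce the statement to a direct application of Theorems \ref{mtha} and \ref{mthb} to the product K\"ahler manifold $(\pmb{B}, Q)$ with $Q=Q_1+\cdots+Q_n$. The first step is to verify the hypotheses of Theorem \ref{mtha}. Each unit ball $B_l\subset\mathbb{C}^{m_l}$ equipped with its Poincar\'e-Bergman metric $Q_l$ is a rank-one irreducible Hermitian symmetric space of non-compact type, biholomorphically and isometrically equivalent to $SU(m_l,1)/S(U(m_l)\times U(1))$; it is therefore simply connected, complete, and irreducible as a K\"ahler manifold. Consequently $(\pmb{B},Q)$ is a simply connected, complete, reducible K\"ahler manifold whose de Rham decomposition is exactly $(B_1,Q_1)\times\cdots\times(B_n,Q_n)$.

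With these hypotheses in place, Theorem \ref{mtha} applies verbatim and yields at once that the metric $F_{t,k}$ defined by \eqref{pbm} is a strongly convex complete K\"ahler-Berwald metric on $\pmb{B}$ for every $t\in[0,+\infty)$ and every integer $k\geq 2$.

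It remains to establish the curvature bound. I would verify that $Q_l$ has constant holomorphic sectional curvature $K_l\equiv -4$ on each $B_l$. Since $Q_l$ is $\mbox{Aut}(B_l)$-invariant and $\mbox{Aut}(B_l)$ acts transitively on $B_l$, it suffices to compute $K_l$ at the origin $0\in B_l$. At $z_l=0$ the coefficients $[Q_l]_{\alpha_l\overline{\beta_l}}$ reduce to $\delta_{\alpha_l\beta_l}$ and their first derivatives vanish, so only the second derivatives $\partial_k\partial_{\bar\ell}[Q_l]_{\alpha_l\overline{\beta_l}}|_{z_l=0}$ contribute to the curvature tensor; a short calculation along an arbitrary unit $(1,0)$-direction $v_l$ then gives $K_l(0,v_l)=-4$, hence $K_l\equiv -4$ by homogeneity. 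Applying Theorem \ref{mthb}(2) with $c=-4$ yields the desired estimate
$$
-4\;\leq\; K_{t,k}(z,v)\;\leq\;-\frac{4(1+t)}{n+t\sqrt[k]{n}},\qquad \forall\,(z,v)\in\widetilde{\pmb{B}}.
$$

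The main (and only nontrivial) step is the pointwise curvature computation confirming $K_l\equiv -4$ in the chosen normalization; everything else is a direct invocation of the structural theorems already established for product manifolds.
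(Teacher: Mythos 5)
Your proposal is correct and follows essentially the same route as the paper: identify $(\pmb{B},Q_1+\cdots+Q_n)$ as a simply connected complete reducible K\"ahler manifold whose de Rham factors are the balls $(B_l,Q_l)$, invoke Theorem \ref{mtha} for the strongly convex complete K\"ahler--Berwald property, and invoke Theorem \ref{mthb} with $K_l\equiv -4$ for the curvature bounds. The only difference is that you spell out the verification that each $(B_l,Q_l)$ is irreducible with constant holomorphic sectional curvature $-4$, which the paper simply quotes as a known fact (and the paper additionally cites Proposition \ref{C}, which is really only needed for the $\mbox{Aut}(\pmb{B})$-invariance discussed in the subsequent remark, not for the statement itself).
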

\begin{proof}
Since the Poincar$\acute{\mbox{e}}$-Bergman metric $Q_l$ is an $\mbox{Aut}(B_l)$-invariant complete K\"ahler metric on $B_l$ with constant holomorphic sectional curvature $-4$ for $l=1,\cdots,n$,
 the assertion follows from Theorem \ref{mtha}, Theorem \ref{mthb} and Proposition \ref{C}.
\end{proof}

\begin{remark}
 By Proposition \ref{C}, we have
$$
I^\circ (\pmb{B},F_{t,k})\cong I^\circ(B_1,Q_1)\times \cdots \times I^\circ(B_n,Q_n)=\mbox{Aut}^\circ(B_1)\times \cdots\times\mbox{Aut}^\circ(B_n),
$$
since $Q_l$ is an $\mbox{Aut}(B_l)$-invariant Bergman metric for any $l=1,\cdots,n$.
Note that $I^\circ (\pmb{B},F_{t,k})$ acts transitively on $\pmb{B}$ since $\mbox{Aut}^\circ(B_1)\times\cdots\times \mbox{Aut}^\circ(B_n)$ acts transitively on $\pmb{B}$.
\end{remark}

\subsection{Complex Finsler metrics on biholomorphically equivalent manifolds}

The following theorem shows that holomorphic invariant strongly pseudoconvex complex Finsler metrics behave very well from the view point of both complex analysis and differential geometry.
\begin{theorem}\label{bh}
Let $D_1$ and $D_2$ be two domains in $\mathbb{C}^n$ and $D_1$ is biholomorphically equivalent to $D_2$. Then $D_2$ admits an $\mbox{Aut}(D_2)$-invariant strongly pseudoconvex complex Finsler metric iff $D_1$ admits an $\mbox{Aut}(D_1)$-invariant strongly pseudoconvex complex Finsler metric. More precisely, the following assertions hold:

(1) $D_2$ admits an $\mbox{Aut}(D_2)$-invariant Hermitian metric iff $D_1$ admits an $\mbox{Aut}(D_1)$-invariant Hermitian metric;

(2) $D_2$ admits an non-Hermitian quadratic $\mbox{Aut}(D_2)$-invariant strongly pseudoconvex complex Finsler metric iff $D_1$ admits an non-Hermitian quadratic $\mbox{Aut}(D_1)$-invariant strongly pseudoconvex complex Finsler metric;

(3) $D_2$ admits an $\mbox{Aut}(D_2)$-invariant complex Berwald metric (resp. weakly complex Berwald metric) iff $D_1$ admits an $\mbox{Aut}(D_1)$-invariant complex Berwald metric (resp.  weakly complex Berwald metric);

(4) $D_2$ admits an $\mbox{Aut}(D_2)$-invariant K\"ahler-Finsler metric (resp. weakly K\"ahler-Finsler metric)  iff $D_1$ admits an $\mbox{Aut}(D_1)$-invariant K\"ahler-Finsler metric (resp. weakly K\"ahler-Finsler metric).
\end{theorem}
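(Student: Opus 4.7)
The plan is to use the biholomorphism $\varphi: D_1 \to D_2$ as a transport device: given any candidate metric on one side, pull it back (or push it forward) to the other, and then verify that both the group-invariance and the special geometric properties (Hermitian quadratic, complex Berwald, K\"ahler-Finsler, etc.) are preserved. First, I would set up notation. Let $\varphi: D_1\to D_2$ be a biholomorphism; it induces a bundle isomorphism $d\varphi: T^{1,0}D_1\to T^{1,0}D_2$ covering $\varphi$, as well as a group isomorphism
\begin{equation*}
\Phi: \mathrm{Aut}(D_1)\to \mathrm{Aut}(D_2),\qquad \Phi(f)=\varphi\circ f\circ \varphi^{-1}.
\end{equation*}
Given a metric $F_2$ on $D_2$, define its pullback by $F_1(z,v):=F_2(\varphi(z),\varphi_\ast v)$, and symmetrically for the push-forward; these are inverse operations. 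The symmetry of the statement then means it suffices to prove only one direction in each item.

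Second, I would verify that strong pseudoconvexity is preserved. The conditions (i)--(iii) of Definition~\ref{cd} are pointwise algebraic and survive the $\mathbb{C}$-linear fiber isomorphism $\varphi_\ast$ trivially. For condition (iv), the fact that $\varphi$ is holomorphic means the change of coordinates induced on $T^{1,0}D_1$ is $\mathbb{C}$-linear in the fibers, so the Hermitian matrix $(G_{\alpha\overline{\beta}})$ of $F_1^2$ is obtained from that of $F_2^2$ by conjugation with $\partial\varphi/\partial z$, preserving positive definiteness. Similarly, strong convexity is preserved because holomorphic changes of coordinates act as real-linear transformations of the underlying real tangent spaces that commute with complexification. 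The $\mathrm{Aut}(D_1)$-invariance of $F_1$ is immediate from the definition: for $f\in \mathrm{Aut}(D_1)$, writing $\tilde f=\Phi(f)\in\mathrm{Aut}(D_2)$,
\begin{equation*}
F_1(f(z),f_\ast v)=F_2(\varphi f(z),\varphi_\ast f_\ast v)=F_2(\tilde f\varphi(z),\tilde f_\ast\varphi_\ast v)=F_2(\varphi(z),\varphi_\ast v)=F_1(z,v),
\end{equation*}
where the $\mathrm{Aut}(D_2)$-invariance of $F_2$ was used in the third equality.

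Third, I would verify item by item that each special class is closed under biholomorphic pullback. For (1), if $F_2^2=h_{\alpha\overline\beta}(w)v^\alpha\overline{v^\beta}$ is Hermitian quadratic, then pulling back via holomorphic $\varphi$ produces $F_1^2=h_{\alpha\overline\beta}(\varphi(z))\,\frac{\partial\varphi^\alpha}{\partial z^\mu}\overline{\frac{\partial\varphi^\beta}{\partial z^\nu}}\,v^\mu\overline{v^\nu}$, again Hermitian quadratic; the converse is the same with $\varphi^{-1}$. Conversely, non-Hermitian quadraticity is preserved because a holomorphic isomorphism cannot convert a non-quadratic metric into a quadratic one (otherwise its inverse would do the opposite). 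This proves (2). For (3) and (4), the key point is that the Chern--Finsler connection, the associated geodesic spray coefficients $\mathbb{G}^\alpha$, the torsion contractions appearing in \eqref{sk}--\eqref{wk}, and the connection coefficients $\Gamma^\alpha_{\nu;\mu}$ all transform tensorially under biholomorphic changes of coordinates, since they are intrinsic objects attached to $F$. Hence the conditions that $\Gamma^\alpha_{\nu;\mu}$ be independent of $v$ (complex Berwald), or that $\mathbb{G}_{\nu\mu}^\alpha$ be independent of $v$ (weakly complex Berwald), or that the torsion-type identities \eqref{sk}--\eqref{wk} hold, all transfer from $F_2$ to $F_1=\varphi^\ast F_2$ and back.

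The main conceptual issue, rather than any calculational obstacle, is to phrase ``transforms tensorially'' precisely enough: one must note that the non-linear Chern--Finsler connection on $D_2$ pulls back to the non-linear Chern--Finsler connection on $D_1$ because the latter is uniquely determined by $F$ and the complex structure, both of which $\varphi$ intertwines. Once this naturality is in hand, items (3) and (4) are immediate corollaries. I expect no genuine obstacle here; the theorem is essentially a functoriality statement, and the only care needed is to ensure that each of the listed properties is indeed expressed in terms of data that a biholomorphism respects, which is the case for all of them by the invariant formulations in \cite{ap}.
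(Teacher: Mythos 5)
Your proposal is correct and follows essentially the same route as the paper: transport the metric through the biholomorphism, observe that the Hessian in the fiber variables transforms by congruence with the (nonsingular, holomorphic) Jacobian so that strong pseudoconvexity and the quadratic/non-quadratic dichotomy are preserved, verify invariance by conjugating automorphisms, and then check that the Berwald and K\"ahler-type conditions survive the change of coordinates. One imprecision worth correcting: the connection coefficients $\Gamma^\gamma_{\beta;\alpha}$ and the spray coefficients do \emph{not} transform tensorially --- their transformation law carries an inhomogeneous term involving second derivatives of the coordinate change (this is exactly the term $\frac{\partial f_\gamma}{\partial z_1^\lambda}\frac{\partial^2 g_\lambda}{\partial z_2^\beta\partial z_2^\alpha}$ in the paper's formula \eqref{cc}); the argument for item (3) goes through only because that extra term is independent of $v$, and for item (4) because it is symmetric in $\alpha,\beta$ and hence cancels in the torsion differences \eqref{sk}--\eqref{wk}, which \emph{are} tensorial. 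Your closing naturality remark supplies the right idea, but the proof should record the actual transformation law rather than assert tensoriality of the connection itself.
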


\begin{proof} Denote $z_1=(z_1^1,\cdots,z_1^n)$ and $z_2=(z_2^1,\cdots,z_2^n)$ the holomorphic coordinates on $D_1$ and $D_2$ respectively so that $(z_1,v_1)=(z_1^1,\cdots,z_1^n, v_1^1,\cdots,v_1^n)$ and
$(z_2,v_2)=(z_2^1,\cdots,z_2^n,v_2^1,\cdots,v_2^n)$ are holomorphic coordinates on $T^{1,0}D_1\cong D_1\times \mathbb{C}^n$ and $T^{1,0}D_2\cong D_2\times \mathbb{C}^n$, respectively. Since $D_1$ is biholomorphically equivalent to $D_2$, there exists holomorphic maps $f=(f_1,\cdots,f_n):D_1\rightarrow D_2$ and $g=(g_1,\cdots,g_n):D_2\rightarrow D_1$ such that
\begin{equation}
g\circ f=\mbox{id}_{D_1}\quad \mbox{and}\quad f\circ g=\mbox{id}_{D_2}.\label{fg}
\end{equation}

Suppose that $F_1: T^{1,0}D_1\rightarrow [0,+\infty)$ is an $\mbox{Aut}(D_1)$-invariant strongly pseudoconvex complex Finsler metric on $D_1$. Define
\begin{equation}
F_2(z_2,v_2):=F_1(g(z_2),(g_\ast)_{z_2}(v_2)),\quad \forall v_2\in T_{z_2}^{1,0}D_2.\label{f2}
\end{equation}
It is suffice to show that $F_2$ is an $\mbox{Aut}(D_2)$-invariant strongly pseudoconvex complex Finsler metric.

Denote $G_1=F_1^2$ and $G_2=F_2^2$. Then by \eqref{f2}, we have
\begin{eqnarray}
\frac{\partial^2G_2}{\partial v_2^\alpha\partial\overline{v_2^\beta}}&=&\frac{\partial^2G_1}{\partial v_1^\mu\partial \overline{v_1^\nu}}\frac{\partial g_\mu}{\partial z_2^\alpha}\overline{\frac{\partial g_\nu}{\partial z_2^\beta}}\label{sp}\\
\frac{\partial^3G_2}{\partial v_2^\alpha\partial\overline{v_2^\beta}\partial v_2^\gamma}&=&\frac{\partial^3G_1}{\partial v_1^\mu\partial \overline{v_1^\nu}\partial v_1^\lambda}\frac{\partial g_\mu}{\partial z_2^\alpha}\overline{\frac{\partial g_\nu}{\partial z_2^\beta}}\frac{\partial g_\lambda}{\partial z_2^\lambda}.\label{hm}
\end{eqnarray}
Since the $n\times n$ Jacobi matrix $(\frac{\partial g_\mu}{\partial z_2^\alpha})$ is nonsingular and invertible,
 \eqref{sp} implies that
$$\Big(\frac{\partial^2G_2}{\partial v_2^\alpha\partial\overline{v_2^\beta}}\Big)_{n\times n}\quad\mbox{is positive definite iff} \quad \Big(\frac{\partial^2G_1}{\partial v_1^\mu\partial \overline{v_1^\nu}}\Big)\quad
 \mbox{is positive definite}.$$
This implies that $F_2$ is a strongly pseudoconvex complex Finsler metric on $D_2$.

Now \eqref{hm} implies that the complex Cartan tensors
$$\frac{\partial^3G_2}{\partial v_2^\alpha\partial\overline{v_2^\beta}\partial v_2^\gamma}=0\quad \mbox{iff}\quad \frac{\partial^3G_1}{\partial v_1^\mu\partial\overline{v_1^\nu}\partial v_1^\lambda}=0,$$
and
$$
\frac{\partial^3G_2}{\partial v_2^\alpha\partial\overline{v_2^\beta}\partial v_2^\gamma}\neq 0\quad \mbox{iff}\quad \frac{\partial^3G_1}{\partial v_1^\mu\partial\overline{v_1^\nu}\partial v_1^\lambda}\neq 0,
$$
which in return implies that $F_2$ is a Hermitian quadratic metric iff $F_1$ is a Hermitian quadratic metric, and $F_2$ is a non-Hermitian quadratic metric iff $F_1$ is a non-Hermitian quadratic metric.

Next we check that $F_2$ is $\mbox{Aut}(D_2)$-invariant. For any given $h\in\mbox{Aut}(D_2)$, we have $g\circ h\circ f\in \mbox{Aut}(D_1)$. Using the fact that $F_1$ is $\mbox{Aut}(D_1)$-invariant, we have
\begin{eqnarray*}
F_2(h(z_2), h_\ast(v_2))
&=& F_1(g(h(z_2)),g_\ast(h_\ast(v_2)))\\
&=&F_1(g\circ h\circ f)(g(z_2)),(g\circ h\circ f)_\ast(g_\ast(v_2)))\\
&=&F_1(g(z_2),g_\ast(v_2))\\
&=&F_2(z_2, v_2),\quad \forall v_2\in T_{z_2}^{1,0}D_2,
\end{eqnarray*}
where in the second equality we use \eqref{fg} and in the last equality we use \eqref{f2}. Thus $F_2$ is $\mbox{Aut}(D_2)$-invariant. This completes the assertions (1) and (2).

For the assertion (3), let's denote $(G_2^{\overline{\beta}\gamma})$ and $(G_1^{\overline{\nu}\sigma})$ the inverse matrix of $(\frac{\partial^2G_2}{\partial v_2^\alpha\partial\overline{v_2^\beta}})$ and $(\frac{\partial^2G_1}{\partial v_1^\mu\partial \overline{v_1^\nu}})$, respectively. Denote $\breve{\varGamma}_{;\alpha}^\gamma$ and
$\breve{\varGamma}_{\beta;\alpha}^{\gamma}$ the Chern-Finsler nonlinear connection coefficients and horizontal Chern-Finsler connection coefficients associated to $F_2$, respectively;  $\hat{\varGamma}_{;\lambda}^\sigma$ and $\hat{\varGamma}_{\mu;\lambda}^{\sigma}$ the Cher-Finsler nonlinear connection coefficients  and the horizontal Chern-Finsler connection coefficients associated to $F_1$, respectively. Then by \eqref{sp} and \eqref{f2} and the facts that $(\frac{\partial f_\gamma}{\partial z_1^\sigma})$ and $(\frac{\partial g_\lambda}{\partial z_2^\alpha})$ are holomorphic and invertible $n$-by-$n$ matrices, we have
\begin{eqnarray*}
G_2^{\overline{\beta}\gamma}&=&G_1^{\overline{\nu}\sigma}\frac{\partial f_\gamma}{\partial z_1^\sigma}\overline{\frac{\partial f_\beta}{\partial z_1^\nu}}, \\
\frac{\partial^2G_2}{\partial \overline{v_2^\tau}\partial z_2^\alpha}&=&\Bigg\{\frac{\partial^2G_1}{\partial \overline{v_1^\mu}\partial z_1^\lambda}\frac{\partial g_\lambda}{\partial z_2^\alpha}+\frac{\partial^2G_1}{\partial v_1^\beta\partial \overline{v_1^\mu}}\frac{\partial^2g_\beta}{\partial z_2^\lambda\partial z_2^\alpha}v_2^\lambda\Bigg\}\overline{\frac{\partial g_\mu}{\partial z_2^\tau}}.
\end{eqnarray*}
Thus
\begin{eqnarray}
\breve{\varGamma}_{;\alpha}^{\gamma}=G_2^{\overline{\tau}\gamma}\frac{\partial^2G_2}{\partial\overline{v_2^\tau}\partial z_2^\alpha}=\hat{\varGamma}_{;\lambda}^{\sigma}\frac{\partial f_\gamma}{\partial z_1^\sigma}\frac{\partial g_\lambda}{\partial z_2^\alpha}+\frac{\partial f_r}{\partial z_1^\beta}\frac{\partial^2g_\beta}{\partial z_2^\lambda\partial z_2^\alpha}v_2^\lambda,\label{bfc}
\end{eqnarray}
which yields $\breve{\varGamma}_{;\alpha}^\gamma v_2^\alpha=\hat{\varGamma}_{;\lambda}^\sigma v_1^\lambda\frac{\partial f_\gamma}{\partial z_1^\sigma}+\frac{\partial f_r}{\partial z_1^\beta}\frac{\partial^2g_\beta}{\partial z_2^\lambda\partial z_2^\alpha}v_2^\lambda v_2^\alpha$. This implies that $F_2$ is a weakly complex Berwald metric iff $F_1$ is a weakly complex Berwald metric.
By \eqref{bfc}, we also get
\begin{eqnarray}
\breve{\varGamma}_{\beta;\alpha}^{\gamma}=\hat{\varGamma}_{\mu;\lambda}^{\sigma}\frac{\partial f_\gamma}{\partial z_1^\sigma}\frac{\partial g_\lambda}{\partial z_2^\alpha}\frac{\partial g_\mu}{\partial z_2^\beta}+\frac{\partial f_\gamma}{\partial z_1^\lambda}\frac{\partial^2g_\lambda}{\partial z_2^\beta\partial z_2^\alpha}.\label{cc}
\end{eqnarray}
Note that \eqref{cc} imply that $F_2$ is a complex Berwald metric iff $F_1$ is a complex Berwald metric. This completes the assertion (3).

By \eqref{cc}  we also  have
\begin{equation}
\breve{\varGamma}_{\beta;\alpha}^\gamma-\breve{\varGamma}_{\alpha;\beta}^\gamma= (\hat{\varGamma}_{\mu;\lambda}^\sigma-\hat{\varGamma}_{\lambda;\mu}^\sigma)\frac{\partial f_\gamma}{\partial z_1^\sigma}\frac{\partial g_\lambda}{\partial z_2^\alpha}\frac{\partial g_\mu}{\partial z_2^\beta}\label{bkf}
\end{equation}
and
\begin{equation}
(G_2)_\gamma(\breve{\varGamma}_{\beta;\alpha}^\gamma-\breve{\varGamma}_{\alpha;\beta}^\gamma)v_2^\beta=\Big\{(G_1)_\nu (\hat{\varGamma}_{\mu;\lambda}^\sigma-\hat{\varGamma}_{\lambda;\mu}^\sigma)v_1^\mu\Big\}\frac{\partial f_\gamma}{\partial z_1^\sigma}\frac{\partial g_\lambda}{\partial z_2^\alpha}\frac{\partial f_\nu}{\partial z_2^\gamma},\label{bwkf}
\end{equation}
where we denote $(G_2)_\gamma=\frac{\partial G_2}{\partial v_2^\gamma}$  and $(G_1)_\nu=\frac{\partial G_1}{\partial v_1^\nu}$ and use the equality $v_1^\mu=\frac{\partial g_\mu}{\partial z_2^\beta}v_2^\beta$. Note that \eqref{bkf} and \eqref{bwkf} imply that $F_2$ is a K\"ahler-Finsler metric iff $F_1$ is a K\"ahler-Finsler metric, and $F_2$ is a weakly K\"ahler-Finsler metric iff $F_1$ is a weakly K\"ahler-Finsler metric. This complete the proof.
 \end{proof}

\begin{theorem}\label{bhm}
Let $M_1$ and $M_2$ be two complex manifolds and $M_1$ is biholomorphically equivalent to $M_2$. Then $M_2$ admits an $\mbox{Aut}(M_2)$-invariant strongly pseudoconvex complex Finsler metric iff $M_1$ admits an $\mbox{Aut}(M_1)$-invariant strongly pseudoconvex complex Finsler metric. More precisely, the following assertions hold:

(1) $M_2$ admits an $\mbox{Aut}(M_2)$-invariant Hermitian metric iff $M_1$ admits an $\mbox{Aut}(M_1)$-invariant Hermitian metric;

(2) $M_2$ admits an non-Hermitian quadratic $\mbox{Aut}(M_2)$-invariant strongly pseudoconvex complex Finsler metric iff $M_1$ admits an non-Hermitian quadratic $\mbox{Aut}(M_1)$-invariant strongly pseudoconvex complex Finsler metric;

(3) $M_2$ admits an $\mbox{Aut}(M_2)$-invariant complex Berwald metric (resp. weakly complex Berwald metric) iff $M_1$ admits an $\mbox{Aut}(M_1)$-invariant complex Berwald metric (resp. weakly complex Berwald metric);

(4) $M_2$ admits an $\mbox{Aut}(M_2)$-invariant K\"ahler-Finsler metric (resp. weakly K\"ahler-Finsler metric)  iff $M_1$ admits an $\mbox{Aut}(M_1)$-invariant K\"ahler-Finsler metric (resp. weakly K\"ahler-Finsler metric).
\end{theorem}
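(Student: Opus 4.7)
The plan is to reduce Theorem \ref{bhm} to the local computations already carried out in the proof of Theorem \ref{bh} by using the fact that all the relevant notions (strong pseudoconvexity, the Cartan tensor, complex Berwald, (weakly) complex Berwald, (weakly) K\"ahler-Finsler) are defined by conditions that are invariant under holomorphic change of coordinates on $T^{1,0}M$. In particular, nothing in the proof of Theorem \ref{bh} genuinely used that $D_1,D_2$ were domains in $\mathbb{C}^n$: the only role of the ambient $\mathbb{C}^n$ was to provide global holomorphic coordinates $(z,v)$ on $T^{1,0}D$. On a general complex manifold $M$ the same computations go through verbatim on any holomorphic chart.

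More precisely, let $f:M_1\to M_2$ be a biholomorphism with inverse $g:M_2\to M_1$, and suppose $F_1:T^{1,0}M_1\to[0,+\infty)$ is $\mathrm{Aut}(M_1)$-invariant strongly pseudoconvex. Define
\begin{equation*}
F_2(z_2,v_2):=F_1\bigl(g(z_2),g_\ast(v_2)\bigr),\qquad \forall\,(z_2,v_2)\in T^{1,0}M_2.
\end{equation*}
This is a globally defined continuous function on $T^{1,0}M_2$ which is $C^\infty$ on $\widetilde{M_2}$ and satisfies the homogeneity $F_2(z_2,\lambda v_2)=|\lambda|F_2(z_2,v_2)$ because $g_\ast$ is complex linear on fibers. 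First I would verify that $F_2$ is strongly pseudoconvex: pick any holomorphic chart $(U_2,z_2)$ on $M_2$ and a holomorphic chart $(U_1,z_1)$ on $M_1$ with $g(U_2)\subset U_1$; in these coordinates $g$ is represented by holomorphic functions $g_1,\dots,g_n$ whose Jacobian is invertible, and the identity
\begin{equation*}
\frac{\partial^2G_2}{\partial v_2^\alpha\partial\overline{v_2^\beta}}=\frac{\partial^2G_1}{\partial v_1^\mu\partial\overline{v_1^\nu}}\frac{\partial g_\mu}{\partial z_2^\alpha}\overline{\frac{\partial g_\nu}{\partial z_2^\beta}}
\end{equation*}
(which is exactly \eqref{sp} in the proof of Theorem \ref{bh}) transfers positive definiteness between the two Levi matrices. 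The $\mathrm{Aut}(M_2)$-invariance is immediate from the observation that for any $h\in\mathrm{Aut}(M_2)$ the map $g\circ h\circ f$ lies in $\mathrm{Aut}(M_1)$, so
\begin{equation*}
F_2(h(z_2),h_\ast(v_2))=F_1\bigl((g\circ h\circ f)(g(z_2)),(g\circ h\circ f)_\ast(g_\ast(v_2))\bigr)=F_1(g(z_2),g_\ast(v_2))=F_2(z_2,v_2).
\end{equation*}
The converse direction is symmetric by interchanging the roles of $f$ and $g$.

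For the four refined statements, I would invoke the chart-level identities \eqref{hm}, \eqref{bfc}, \eqref{cc}, \eqref{bkf} and \eqref{bwkf} from the proof of Theorem \ref{bh}, each read on overlapping charts for which the coordinate expression of $g$ is holomorphic with invertible Jacobian. Since in each identity the coefficients relating the two sides are the entries of $(\partial g_\lambda/\partial z_2^\alpha)$ (and its second derivatives) which are holomorphic and invertible, the vanishing of the complex Cartan tensor, the independence of the horizontal Chern-Finsler connection coefficients on fiber coordinates, the holomorphic quadratic dependence of the geodesic spray coefficients, and the symmetry conditions \eqref{sk}, \eqref{k}, \eqref{wk} of the torsion each transfer between $F_1$ and $F_2$. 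This will yield (1)--(4).

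The only conceptual point that deserves care is that on a general complex manifold the coordinate identities above are only local, whereas the definitions of strongly pseudoconvex, complex Berwald, K\"ahler-Finsler etc.\ must hold on all of $\widetilde{M_2}$. This is the main (and really the only) obstacle, and it is dispatched by the standard remark (already recorded after Definition \ref{cd} and Definition \ref{kcd}) that all of these conditions are invariantly defined, hence it is enough to verify them on a covering family of charts; covering $M_2$ by charts $U_2$ whose images $g(U_2)$ lie in a single chart of $M_1$ (which can always be arranged by refining) reduces the global statement to the local chart identities taken from the proof of Theorem \ref{bh}. Once this covering argument is set up explicitly, every assertion (1)--(4) follows with no further computation.
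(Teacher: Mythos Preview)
Your proposal is correct and follows essentially the same approach as the paper: the paper's own proof of Theorem \ref{bhm} is a two-sentence reduction to Theorem \ref{bh}, observing that strong pseudoconvexity, the (weakly) complex Berwald and (weakly) K\"ahler-Finsler conditions are all independent of the choice of local holomorphic coordinates, so one may work chart-by-chart and quote the identities \eqref{sp}--\eqref{bwkf}. You have simply spelled out this reduction in more detail (the pullback definition of $F_2$, the conjugation $g\circ h\circ f$ for $\mathrm{Aut}$-invariance, and the covering argument), all of which are implicit in the paper's terse proof.
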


\begin{proof}
Note that since the strongly pseudoconvexity of a complex Finsler metric $F$, the notions of complex Berwald metric (resp. weakly complex Berwald metric), K\"ahler-Finsler metric (resp. weakly K\"ahler-Finsler metric) are all independent of the choice of local holomorphic coordinates on a complex manifold $M$. Thus by an argument of using local holomorphic coordinates $z=(z^1,\cdots,z^n)$ on $M$ and the induced local holomorphic coordinates $(z,v)$ on $T^{1,0}M$, and using Theorem \ref{bh}, we immediate obtain the assertions.
\end{proof}

\subsection{Some applications}

In this section we shall give some applications of Theorem \ref{mtha}, Theorem \ref{mthb} and Theorem \ref{bhm}.
\begin{proposition}
Let $D$ be a bounded Reinhardt domain in $\mathbb{C}^N$. Suppose that there exists a compact subset $K$ of $D$
such that Aut$(D)\cdot K=D$. Then $D$ admits infinite many non-Hermitian quadratic strongly convex K\"ahler-Berwald metrics with holomorphic sectional curvatures bounded from above by a negative constant.
\end{proposition}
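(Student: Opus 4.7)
The strategy is a two-step reduction: first show that $D$ is biholomorphic to a product of unit balls, then pull back the K\"ahler-Berwald metrics already constructed in Proposition \ref{prb}. The hypothesis $\mbox{Aut}(D)\cdot K=D$ for a compact $K\subset D$ says the action of the automorphism group has compact quotient, and combined with the Reinhardt structure, a classical classification (of Sunada type, for bounded Reinhardt domains with cocompact automorphism action) forces $D$ to be biholomorphically equivalent to a product of unit balls $\pmb{B}=B_{m_1}\times\cdots\times B_{m_n}\subset\mathbb{C}^N$, where $N=m_1+\cdots+m_n$. Invoking this classification theorem is the main obstacle; once it is in hand the rest of the proof is assembled from tools already developed in the paper.

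Once the classification is available, I would apply Proposition \ref{prb} to $\pmb{B}$ to obtain, for each parameter $t\in(0,+\infty)$ and integer $k\geq 2$, a complete strongly convex K\"ahler-Berwald metric $F_{t,k}$ on $\pmb{B}$ built from the Poincar\'e--Bergman metrics $Q_l$ on the factors $B_{m_l}$. By the remark following Theorem \ref{mth-1}, each such $F_{t,k}$ is non-Hermitian quadratic, and by Proposition \ref{prb} its holomorphic sectional curvatures lie in $\bigl[-4,\,-\tfrac{4(1+t)}{n+t\sqrt[k]{n}}\bigr]$, whose upper endpoint is a strictly negative constant.

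Finally, I would pull each $F_{t,k}$ back to $D$ via a biholomorphism $g:D\to\pmb{B}$ by setting $\widetilde{F}_{t,k}(z,v):=F_{t,k}(g(z),g_\ast(v))$. By parts (2), (3) and (4) of Theorem \ref{bh}, $\widetilde{F}_{t,k}$ is a non-Hermitian quadratic strongly pseudoconvex complex Berwald and K\"ahler-Finsler metric on $D$, hence a non-Hermitian quadratic K\"ahler-Berwald metric. Strong convexity is preserved because the real Jacobian of $g$ is everywhere non-singular, so the real Hessian of $\widetilde{F}_{t,k}^2$ remains positive definite on the complement of the zero section of $TD$. Since the holomorphic sectional curvature is a pointwise intrinsic invariant of a strongly pseudoconvex complex Finsler metric, the curvature bounds for $F_{t,k}$ transfer verbatim to $\widetilde{F}_{t,k}$, giving the required upper bound by the negative constant $-\tfrac{4(1+t)}{n+t\sqrt[k]{n}}$. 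Distinct pairs $(t,k)$ yield distinct metrics on $\pmb{B}$, and therefore on $D$ after pullback, by the same argument as in Theorem \ref{mthc}(2), producing infinitely many metrics of the claimed type on $D$.
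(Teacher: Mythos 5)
Your proof takes essentially the same route as the paper: the classification you need is precisely the main theorem of Kodama (Proc.\ Japan Acad.\ 54, 1978), which the paper cites to conclude that a bounded Reinhardt domain with $\mbox{Aut}(D)\cdot K=D$ is biholomorphic to a product of unit balls, and the remaining steps (Proposition \ref{prb} on products of balls plus transfer under biholomorphism via Theorem \ref{bh}) coincide with the paper's argument. Your additional remarks on why strong convexity and the curvature bounds survive the pullback only make explicit what the paper leaves implicit, so the proposal is correct.
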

\begin{proof}
By assumption and the main Theorem in \cite{Kodama}, $D$ is biholomorphically equivalent to a finite product of unit open balls, namely $D\cong B_1\times \cdots\times B_n$ with
$B_l\subset \mathbb{C}^{n_l}$ for $l=1,\cdots,n$. Thus the assertion follows immediately from Theorem \ref{mtha}, Theorem \ref{mthb}, Proposition \ref{prb} and Theorem \ref{bh}.

\end{proof}

\begin{proposition}
Let $M$ be a connected Stein manifold of dimension $n\geq 2$ and $\pmb{B}$ a domain in $\mathbb{C}^n$ given as the direct product $\pmb{B}=B_1\times\cdots\times B_{n_s}$ of balls, where each $B_{n_j}$ is the unit ball in $\mathbb{C}^{n_j}$ with $n_j>1$ and $\displaystyle\sum_{j=1}^sn_j=n$. Assume that there exists a topological subgroup $G$ of $\mbox{Aut}(M)$ that is isomorphism to $\mbox{Aut}(\pmb{B})$ as topological groups. Then $M$ admits infinite many strongly convex non-Hermitian quadratic K\"ahler-Berwald metrics with holomorphic sectional curvatures bounded from above by a negative constant.
\end{proposition}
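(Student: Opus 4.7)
The strategy is to reduce the statement to the already-treated case of a product of balls $\pmb{B}$ via biholomorphism, and then transport the metrics produced in Proposition \ref{prb} back to $M$ through Theorem \ref{bhm}. The first step is to invoke a Stein-manifold version of Kodama's characterization of products of balls by their holomorphic automorphism groups: since $M$ is a connected Stein manifold of dimension $n$ and $\mbox{Aut}(M)$ contains a topological subgroup $G$ isomorphic to $\mbox{Aut}(\pmb{B})$ with $\pmb{B}=B_{n_1}\times\cdots\times B_{n_s}$, $n_j>1$, this rigidity should force $M$ to be biholomorphically equivalent to $\pmb{B}$. This parallels the argument used in the preceding proposition for bounded Reinhardt domains, now in the Stein category.

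With a biholomorphism $\Phi:M\to\pmb{B}$ in hand, Proposition \ref{prb} supplies, for every $t\in(0,+\infty)$ and integer $k\geq 2$, an $\mbox{Aut}(\pmb{B})$-invariant complete strongly convex K\"ahler-Berwald metric $F_{t,k}$ on $\pmb{B}$ which is non-Hermitian quadratic and whose holomorphic sectional curvatures lie in $\left[-4,\,-\frac{4(1+t)}{s+t\sqrt[k]{s}}\right]$, hence are bounded above by a negative constant depending on $t$, $k$ and $s$. Applying Theorem \ref{bhm} to $\Phi$, I pull this family back to $M$; assertions (2), (3) and (4) of that theorem guarantee that the pulled-back metrics $\widetilde{F}_{t,k}:=F_{t,k}\circ\Phi_{\ast}$ remain strongly pseudoconvex, non-Hermitian quadratic and K\"ahler-Berwald. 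The strong convexity of $F_{t,k}$ transfers to $\widetilde{F}_{t,k}$ because it is the positive-definiteness of the real Hessian of $F^2$, a property independent of holomorphic coordinates; likewise the holomorphic sectional curvature, being a biholomorphic invariant computed along holomorphic tangent directions, transfers unchanged, so the negative upper bound persists. Distinct parameters $(t,k)$ produce holomorphically non-isometric metrics (in the spirit of Theorem \ref{mthc}(2)), yielding infinitely many.

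The main obstacle is Step 1, namely the identification $M\cong\pmb{B}$ from the purely group-theoretic hypothesis $G\cong\mbox{Aut}(\pmb{B})$. The condition $n_j>1$ for every factor is indispensable for the applicable Kodama-type rigidity, because a one-dimensional ball factor would contribute only $\mbox{PSU}(1,1)$, which is too small to rigidify a Stein manifold of arbitrary shape. Once Step 1 is secured, the remainder of the argument is a direct concatenation of Proposition \ref{prb} and Theorem \ref{bhm}, requiring no new computation.
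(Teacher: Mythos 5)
Your proposal is correct and follows essentially the same route as the paper: the paper likewise invokes the main theorem of Kodama--Shimizu (the reference \cite{KS}) to conclude that the Stein manifold $M$ with the given automorphism-group hypothesis is biholomorphically equivalent to $\pmb{B}$, and then transports the metrics of Proposition \ref{prb} to $M$ via Theorem \ref{mtha}, Theorem \ref{mthb} and the biholomorphic-invariance theorem. Your additional remarks on why strong convexity and the negative curvature bound survive the pullback are sound supplements to what the paper leaves implicit.
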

\begin{proof}
By assumption and the main Theorem in \cite{KS}, it follows that $M$ is biholomorphically equivalent to $\pmb{B}$. Thus the assertion follows immediately from Theorem \ref{mtha}, Theorem \ref{mthb},  Proposition \ref{prb} and Theorem \ref{bh}.

\end{proof}

Note that both the unit ball $B_n$ and the unit polydisk are symmetric domains. But they admit different amounts of symmetry which are reflected in the size of the isotropy groups at the origin (cf. \cite{Hundemer}):

\begin{eqnarray*}
\mbox{Aut}_0(B_n)&=&\{z\mapsto Az: A\in U(n)\},\\
\mbox{Aut}_0^\circ(P_n)&=&\Big\{z\mapsto(e^{i\theta_1}z_1,\cdots,e^{i\theta_n}z_n)\Big| \theta_1,\cdots,\theta_n\in\mathbb{R}\Big\}\cong T^n=\underbrace{S^1\times\cdots\times S^1}_{\mbox{$n$\, copies}},\\
\mbox{Aut}_0(P_n)&=&\Big\{z\mapsto (e^{i\theta_{\sigma(1)}}z_{\sigma(1)},\cdots, e^{i\theta_{\sigma(n)}}z_{\sigma(n)})\Big| \theta_1,\cdots,\theta_n\in\mathbb{R}\;\mbox{and}\; \sigma\; \mbox{is a permutation}\Big\}.
\end{eqnarray*}

Let $\pmb{B}=B_1\times\cdots\times B_s$ be a product of unit balls $B_{i}\subset\mathbb{C}^{m_i}$ for $i=1,\cdots,s$. Then by Proposition \ref{C}, we have
$$\mbox{Aut}_p^\circ(\pmb{B})\cong\mbox{Aut}_{p_1}^\circ(B_1)\times\cdots\times \mbox{Aut}_{p_s}^\circ(B_s),$$
where $p=(p_1,\cdots,p_s)\in \pmb{B}$ such that $p_i\in B_i$ for $i=1,\cdots,s$.
Thus at the origin $0\in \pmb{B}$, we have
$$
\mbox{Aut}_0^\circ(\pmb{B})=U(m_1)\times \cdots\times U(m_s).
$$
Note that
$$T^n\subset \mbox{Aut}_0(B_n)\subset\mbox{Aut}(B_n)\quad \mbox{and} \quad T^n\subset\mbox{Aut}_0(P_n)\subset\mbox{Aut}(P_n).$$
 Since the polydisk and products of balls are homogeneous domains, thus  $\mbox{Aut}_p(D)$ contains a torus of dimension $n$ for each $p\in D$ if $D$ is biholomorphically equivalent to a product of balls.

A Siegel domain of the first kind over a regular cone $C\subset\mathbb{R}^n$ is the tube domain $\{z\in\mathbb{C}^n: \mbox{Im}\,z\in C\}$. Let $H:\mathbb{C}^k\times \mathbb{C}^k\rightarrow\mathbb{C}^n$ be a $C$-Hermitian form satisfying
(i) $H$ is $\mathbb{C}$-linear in the first argument; (ii) $H(z,w)=\overline{H(w,z)}$; (iii) $H(z,z)\in\overline{C}$; (iv) $H(z,z)=0$ iff $z=0$.
The Siegel domain of the second kind over $C$ with $C$-Hermitian form $H:\mathbb{C}^k\times\mathbb{C}^k\rightarrow \mathbb{C}^n$ is defined to be
$$
\{(z,w)\in\mathbb{C}^{n+k}:\mbox{Im}\,z-H(w,w)\in C\}.
$$
The pair $(n,k)$ is called the type of the Siegel domain, which is a biholomorphic invariant \cite{Hundemer}.

Let $D\subset\mathbb{C}^n$ be a domain, we use $\cong$ to denote two domains which are biholomorphically equivalent. Setting
\begin{eqnarray*}
\textbf{BSD}_n&=&\{D\;\cong\mbox{a bounded symmetric domain}\},\\
\textbf{BHD}_n&=&\{D\;\cong\mbox{a bounded homogeneous domain}\},\\
\textbf{SD1}_n&=&\{D\;\cong\mbox{a Siegel domain of the first kind}\},\\
\textbf{SD2}_n&=&\{D\;\cong\mbox{a Siegel domain of the second kind}\},\\
\textbf{BNC}_n&=&\{D\;\cong\mbox{a  bounded domain with a noncompact}\,\mbox{Aut}(D)\}.
\end{eqnarray*}

Then the following inclusions hold (cf. \cite{Hundemer})
$$
\textbf{SD1}_n\subset \textbf{SD2}_n\quad \mbox{and}\quad \textbf{BSD}_n\subset\textbf{BHD}_n\subset\textbf{SD2}_n\subset\textbf{BNC}_n.
$$
Furthermore, $\textbf{BSD}_n\neq \textbf{BHD}_n$ for every $n>3$ (a well-known result of I.I. Pyatetskii-Shapiro), $\textbf{BHD}_n\neq \textbf{SD2}_n$ for $n>2$, $\textbf{SD1}_n\neq \textbf{SD2}_n$ for $n>1$ and $\textbf{SD2}_n\neq \textbf{BNC}_n$ for all $n$.

 \begin{proposition}
 Let $D$ be a doamin in $\textbf{SD1}_n$. If there is a point $p\in D$ such that $\mbox{Aut}_pD$ contains a torus of dimension $n$, then $D$ admits an $\mbox{Aut}(D)$-invariant non-Hermitian quadratic strongly convex K\"ahler-Berwald metric with holomorphic sectional curvature bounded above by a negative constant.
\end{proposition}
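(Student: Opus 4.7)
The plan is to show that the hypothesis forces $D$ to be biholomorphically equivalent to the unit polydisk $P_n$, after which Theorem \ref{phi} and Theorem \ref{bhm} do all the remaining work.

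First I would fix a biholomorphism $\Phi\colon D\to T_C$ onto a tube domain $T_C=\mathbb{R}^n+iC$ over a regular open convex cone $C\subset\mathbb{R}^n$. Because biholomorphisms conjugate isotropy subgroups, the point $q=\Phi(p)$ has isotropy group in $\mathrm{Aut}(T_C)$ containing an $n$-dimensional torus $T^n$. The rigidity claim I need is that this forces $C$ to be linearly isomorphic to the positive orthant $(\mathbb{R}_+)^n$, so that the componentwise Cayley transform gives $T_C\cong P_n$. I would either appeal directly to Hundemer's classification in \cite{Hundemer}, or argue as follows: linearize the $T^n$-action at $q$ on $T_q^{1,0}T_C\cong\mathbb{C}^n$; since $T^n$ preserves the Bergman metric at $q$, it embeds as a subgroup of $U(n)$ of real dimension $n$, hence as a maximal torus, and thus is conjugate to the standard diagonal torus. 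The joint-weight decomposition yields $n$ orthogonal complex one-dimensional eigendirections at $q$; integrating the infinitesimal torus generators together with the real translations of the tube structure then propagates this splitting to a global holomorphic product decomposition, forcing $C\cong(\mathbb{R}_+)^n$ and hence $D\cong P_n$.

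With $D\cong P_n$ in hand, for any fixed $t\in(0,+\infty)$ and integer $k\geq 2$, Theorem \ref{phi} provides an $\mathrm{Aut}(P_n)$-invariant complete strongly convex K\"ahler-Berwald metric $F_{t,k}$ on $P_n$ which is non-Hermitian quadratic and whose holomorphic sectional curvature is bounded above by the negative constant $-4(1+t)/(n+t\sqrt[k]{n})$. Theorem \ref{bhm}, parts (2)--(4), transfers along the biholomorphism $D\cong P_n$ the non-Hermitian quadratic property, the complex Berwald structure, and the K\"ahler-Finsler condition; together these give an $\mathrm{Aut}(D)$-invariant non-Hermitian quadratic K\"ahler-Berwald metric on $D$. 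Strong convexity transfers as well, since a biholomorphism acts $\mathbb{R}$-linearly and invertibly on real tangent fibers via the real form of its holomorphic Jacobian, and positive definiteness of the real fundamental tensor is preserved under such fiber changes. Finally, the holomorphic sectional curvature is a biholomorphic invariant, so the negative upper bound persists on $D$.

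The main obstacle is the rigidity step: turning the presence of an $n$-torus in some isotropy group of a Siegel domain of the first kind into a biholomorphism with $P_n$. The technical heart is showing that the abstract $n$-torus in $\mathrm{Aut}_q(T_C)$ arises from linear symmetries of the cone $C$ of maximal rank, after which the classical fact that a regular convex cone in $\mathbb{R}^n$ admitting an $n$-torus of linear automorphisms must split as $n$ half-lines delivers the product structure. Every subsequent step is a direct invocation of results already proved.
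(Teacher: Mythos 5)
Your proposal is correct and follows essentially the same route as the paper: invoke Hundemer's result (Proposition 3.5 in \cite{Hundemer}) to conclude that $D$ is biholomorphic to $P_n$, then transfer the invariant metrics $F_{t,k}$ of Theorem \ref{phi} along the biholomorphism via Theorem \ref{bhm}. Your alternative hands-on sketch of the rigidity step is only a sketch (the globalization of the eigenspace splitting is the hard part), but since you also offer the direct citation, the argument stands as the paper's does.
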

\begin{proof}
By assumption and Proposition 3.5 in \cite{Hundemer}, it follows that $D$ is biholomorphically equivalent to the polydisk $P_n$. Thus the assertion follows from Theorem \ref{mtha}, Theorem \ref{mthb}, Theorem \ref{phi} and Theorem \ref{bh}.

\end{proof}

\begin{proposition}
Let $D$ be a domain in $\textbf{SD2}_n$.

(1) If there is a point $p\in D$ such that $\mbox{Aut}_p(D)$ contains a torus of dimension $n$ and
$$\dim\{\mbox{center}\; \mbox{Aut}_p^\circ(D)\}=1,$$
 then $D$ admits no $\mbox{Aut}(D)$-invariant non-Hermitian quadratic strongly pseudoconvex complex Finsler metric;

(2) If there is a point $p\in D$ such that
$$\dim\{\mbox{center}\; Aut_p^\circ(D)\}=n>1,$$
 then $D$ admits infinite
many  $\mbox{Aut}(D)$-invariant non-Hermitian quadratic strongly convex K\"ahler-Berwald metrics with holomorphic sectional curvatures bounded above by a negative constant.
\end{proposition}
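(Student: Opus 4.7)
The plan is to reduce both assertions to the already established rigidity/existence results on the unit ball and on products of balls (or polydisks), by invoking a classification of Siegel domains of the second kind under the hypotheses on the isotropy group at $p$. The common engine will be Theorem \ref{bhm}, which transfers the existence or non-existence of $\mbox{Aut}(D)$-invariant strongly pseudoconvex complex Finsler metrics (of various types) across biholomorphisms.

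For assertion (1), I would first invoke the classification results of Hundemer (the same source used to handle the $\textbf{SD1}_n$ case in the preceding proposition) to show that a domain $D\in\textbf{SD2}_n$ satisfying the two conditions (an $n$-dimensional torus in $\mbox{Aut}_p(D)$ together with a one-dimensional center of $\mbox{Aut}_p^\circ(D)$) is biholomorphically equivalent to the unit ball $B_n\subset\mathbb{C}^n$. Once this is known, Theorem \ref{nb} (or equivalently Theorem \ref{mth-2}) says that $B_n$ admits no $\mbox{Aut}(B_n)$-invariant strongly pseudoconvex complex Finsler metric other than a constant multiple of the Poincar\'e--Bergman metric, which is Hermitian quadratic. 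Applying Theorem \ref{bhm}(2) then transfers this non-existence back to $D$: if $D$ admitted an $\mbox{Aut}(D)$-invariant non-Hermitian quadratic strongly pseudoconvex complex Finsler metric, so would $B_n$, contradicting Theorem \ref{nb}.

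For assertion (2), the dual step is to identify $D$ under its hypothesis. When $\dim\{\mbox{center}\,\mbox{Aut}_p^\circ(D)\}=n>1$, the relevant classification in Hundemer's work forces $D$ to be biholomorphically equivalent to a product of unit balls $\pmb{B}=B_{m_1}\times\cdots\times B_{m_s}\subset \mathbb{C}^n$ with $s\geq 2$ (the polydisk $P_n$ being the extreme case $m_1=\cdots=m_n=1$). Proposition \ref{prb} (together with Theorems \ref{mtha} and \ref{mthb}) produces, for every $t\in (0,+\infty)$ and every integer $k\geq 2$, a strongly convex K\"ahler--Berwald metric $F_{t,k}$ on $\pmb{B}$ which is non-Hermitian quadratic and whose holomorphic sectional curvatures lie in $\bigl[-4,-\tfrac{4(1+t)}{s+t\sqrt[k]{s}}\bigr]$, hence are bounded above by a negative constant; moreover $F_{t,k}$ is invariant under $\mbox{Aut}^\circ(B_{m_1})\times\cdots\times\mbox{Aut}^\circ(B_{m_s})$, which by Proposition \ref{C} equals the connected component of $\mbox{Aut}(\pmb{B})$. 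The full $\mbox{Aut}(\pmb{B})$-invariance follows because $F_{t,k}$ is defined symmetrically in the factors and any component-permuting automorphism of $\pmb{B}$ only permutes identical factor contributions in \eqref{pbm}. Finally Theorem \ref{bhm} transfers these infinitely many $\mbox{Aut}(\pmb{B})$-invariant non-Hermitian quadratic strongly convex K\"ahler--Berwald metrics on $\pmb{B}$ to $\mbox{Aut}(D)$-invariant ones on $D$, preserving the curvature bound.

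The main obstacle is the classification step: extracting from Hundemer's structure theory for Siegel domains of the second kind the precise statement that the conditions $\dim T=n$, $\dim\mbox{center}=1$ characterize (up to biholomorphism) the ball $B_n$, while $\dim\mbox{center}=n$ characterizes products of balls. All the Finsler-geometric content then follows immediately from results already proved in the paper (Theorems \ref{nb}, \ref{mtha}, \ref{mthb}, \ref{bhm}, Propositions \ref{C} and \ref{prb}); the only genuinely new work is the identification of $D$ from the hypotheses on $\mbox{Aut}_p(D)$ and on the dimension of its center, and here one cites Hundemer directly.
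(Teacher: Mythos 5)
Your proposal is correct and follows essentially the same route as the paper: cite Hundemer's classification (the paper uses his Corollary 4.4) to identify $D$ up to biholomorphism, then transfer the rigidity result on $B_n$ (Theorem \ref{nb}) respectively the family $F_{t,k}$ via the biholomorphic-invariance theorem. The only minor deviation is in (2), where the paper concludes from the hypothesis $\dim\{\mbox{center}\;\mbox{Aut}_p^\circ(D)\}=n$ that $D$ is biholomorphic to the polydisk $P_n$ itself (so Theorem \ref{phi} applies directly, with full $\mbox{Aut}(P_n)$-invariance already established there), whereas you pass through general products of balls; your symmetry argument for full invariance still works, so this does not affect correctness.
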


\begin{proof}
(1) By assumption and Corollary 4.4 in \cite{Hundemer}, $D$ is biholomorphically equivalent to the unit ball $B_n$. Thus by Theorem \ref{nb} and Theorem \ref{bh}, the assertion follows.

(2) By assumption and Corollary 4.4 in \cite{Hundemer}, $D$ is biholomorphically equivalent to the polydisk $P_n$. Thus by Theorem \ref{phi} and Theorem \ref{bh}, the assertion follows.

\end{proof}

\begin{proposition}
Let $D$ be a domain in $\textbf{SD2}_n$ of type $(m,k)$.

(1) If $m=1$ and there is a point $p\in D$ such that $\mbox{Aut}_p(D)$ contains a torus of dimension $n$, then $D$ admits no $\mbox{Aut}(D)$-invariant non-Hermitian quadratic strongly pseudoconvex complex Finsler metric;

(2) If $m=n>1$ and there is a point $p\in D$ such that $\mbox{Aut}_p(D)$ contains a torus of dimension $n$, then $D$ admits infinite
many $\mbox{Aut}(D)$-invariant non-Hermitian quadratic strongly convex K\"ahler-Berwald metrics with holomorphic sectional curvatures bounded above by a negative constant.

\end{proposition}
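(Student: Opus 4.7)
The plan is to reduce each case to one of the two model domains $B_n$ or $P_n$ via a biholomorphic classification of Siegel domains of the second kind with a large torus in the isotropy, after which Theorem \ref{bhm} (biholomorphic transfer) combined with Theorem \ref{mth-2} or Theorem \ref{phi} finishes the argument. Both parts rely on the same classification tool of \cite{Hundemer}, with the extra hypothesis on the type $(m,k)$ narrowing down the model; the organization of the proof is entirely parallel to the preceding three propositions.

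For part (1), the domain $D\in \textbf{SD2}_n$ has type $(1,k)$ with $1+k=n$. The presence of an $n$-dimensional torus inside $\mbox{Aut}_p(D)$, combined with the type restriction $m=1$, forces $D$ to be biholomorphically equivalent to the unit ball $B_n$ by the classification in \cite{Hundemer}. Theorem \ref{mth-2} (that is, Theorem \ref{nb}) then asserts that $B_n$ admits no $\mbox{Aut}(B_n)$-invariant strongly pseudoconvex complex Finsler metric other than a constant multiple of the Poincar$\acute{\mbox{e}}$-Bergman metric, and in particular none that is non-Hermitian quadratic. Applying assertion (2) of Theorem \ref{bhm} transports this non-existence from $B_n$ to $D$.

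For part (2), with type $(n,0)$ (i.e.\ $m=n>1$, $k=0$) and an $n$-torus in the isotropy, the same classification yields $D\cong P_n$. Theorem \ref{phi} then supplies an explicit infinite family $F_{t,k}$, $t\in(0,+\infty)$ and integer $k\geq 2$, of $\mbox{Aut}(P_n)$-invariant complete strongly convex K\"ahler-Berwald metrics on $P_n$, each non-Hermitian quadratic, with holomorphic sectional curvatures confined to $\left[-4,-\frac{4(1+t)}{n+t\sqrt[k]{n}}\right]$. Pulling these metrics back via a biholomorphism $D\to P_n$ and invoking assertions (2)--(4) of Theorem \ref{bhm} produces the desired infinite family on $D$; since the pullback under a biholomorphism is a holomorphic isometry, the holomorphic sectional curvature at any point is preserved and the upper bound by a negative constant persists.

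The only nontrivial ingredient is the correct invocation of Hundemer's classification: one needs the precise statements that type $(1,k)$ with an $n$-torus in $\mbox{Aut}_p(D)$ forces the ball model, and that type $(n,0)$ with an $n$-torus forces the polydisk. Once these biholomorphic identifications are in place, the remainder is an immediate application of Theorem \ref{bhm} together with Theorem \ref{mth-2} and Theorem \ref{phi}, and no new Finsler-geometric computation is required.
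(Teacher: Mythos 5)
Your proposal is correct and follows essentially the same route as the paper: invoke Hundemer's classification (the paper cites Corollary 4.6 of \cite{Hundemer}) to identify $D$ with $B_n$ in case (1) and with $P_n$ in case (2), then transfer the non-existence statement of Theorem \ref{nb}, respectively the explicit family of Theorem \ref{phi}, across the biholomorphism using the invariance-transfer theorem. The paper's proof is just a terser version of yours.
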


\begin{proof}
(1) By assumption and Corollary 4.6 in \cite{Hundemer}, $D$ is biholomorphically equivalent to the unit ball $B_n$. Thus by Theorem \ref{nb} and Theorem \ref{bh}, the assertion follows.

(2) By assumption and Corollary 4.6 in \cite{Hundemer}, $D$ is biholomorphically equivalent to the polydisk $P_n$. Thus by Theorem \ref{phi} and Theorem \ref{bh}, the assertion follows.

\end{proof}

In \cite{Hundemer}, Hundemer proved that a Siegel domain $D\subset\mathbb{C}^n$ of the second kind is biholomorphically equivalent to a product of balls iff there is a point $p\in D$ such that the isotropy group of $p$ contains a torus of dimension $n$, and proved that the only domains biholomorphically equivalent to a Siegel domain of the second kind and to a Reinhardt domain are exactly the domains biholomorphically equivalent to a product of balls.

A complex manifold $M$ is called Kobayashi-hyperbolic if the Kobayashi pseudometric $F_M$ is a metric, which in return induces an $\mbox{Aut}(M)$-invariant pseudodistance, namely the Kobayashi pseudodistance. For a Kobayashi-hyperbolic manifold $M$ of complex dimension $n$, it is known (cf. \cite{Ka}) that
$$\dim\mbox{Aut}(M)\leq n^2+n,$$
 and $\mbox{Aut}(M)$ is a Lie group with respect to the compact-open topology.

\begin{theorem}
Let $M$ be a connected Kobayashi-hyperbolic manifold of complex dimension $n$. Suppose that $\dim\mbox{Aut}(M)=n^2+2n$. Then $M$ admits no $\mbox{Aut}(M)$-invariant strongly pseudoconvex complex Finsler metric other than a constant multiple of the Bergman metric, namely $M$ admits no $\mbox{Aut}(M)$-invariant non-Hermitian quadratic strongly pseudoconvex complex Finsler metric.
\end{theorem}

\begin{proof}
By our assumption and Theorem 1.1 in \cite{Is} (cf. also \cite{Ka}, \cite{Ko}), $M$ is biholomorphically equivalent to $B_n$, thus by Theorem \ref{nb} and Theorem \ref{bhm} the assertion follows.
\end{proof}

Using Theorem \ref{mtha} and Theorem \ref{mthb}, we obtain the following  example of non-Hermitian quadratic metric with positive holomorphic sectional curvature.
\begin{proposition}
Let $M=\mathbb{CP}^{m_1}\times \cdots \times \mathbb{CP}^{m_n}$  be the product of complex projective spaces $\mathbb{CP}^{m_l}$
endowed with the Fubini-Study metric
\begin{eqnarray*}
Q_l(z_l,v_l)=\frac{\displaystyle\sum_{\alpha_l=1}^{m_1}|v_l^{\alpha_l}|^2}{1+\Big(\displaystyle\sum_{\alpha_l=1}^{m_l}|z_l^{\alpha_l}|^2\Big)}+\frac{\displaystyle\sum_{\alpha_l=1}^{m_l}z_l^{\alpha_l}\overline{v_l^{\alpha_l}}}{\Big[1+\Big(\displaystyle\sum_{\alpha_l=1}^{m_l}|z_l^{\alpha_l}|^2\Big)\Big]^2},\quad v_l\in T_{z_l}^{1,0}\mathbb{CP}^{m_l}
\end{eqnarray*}
for $l=1,\cdots,n$. Then for any fixed $t\in[0,+\infty)$ and integer $k\geq 2$,
\begin{eqnarray*}
F_{t,k}(z,v)&=&\frac{1}{\sqrt{1+t}}\sqrt{\sum_{l=1}^nQ_l(z_l,v_l)+t\sqrt[k]{\sum_{l=1}^nQ_l^k(z_l,v_l)}},\quad \forall (z,v)\in T^{1,0}M
\end{eqnarray*}
is a strongly convex K\"ahler-Berwald metrics on $M$ with holomorphic sectional curvatures $\in\left[\frac{4(1+t)}{n+t\sqrt[k]{n}},4\right]$.
\end{proposition}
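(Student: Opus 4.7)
The plan is to realize this proposition as a direct instantiation of Theorem \ref{mtha} and part (1) of Theorem \ref{mthb}, with the Fubini--Study data playing the role of the irreducible factors. First I would verify the structural hypotheses: each factor $(\mathbb{CP}^{m_l},Q_l)$ is a simply connected, complete, irreducible Hermitian symmetric space of compact type (irreducibility is the classical fact that the complex projective space is a rank-one compact symmetric space), and $Q_l$ is Kähler. Consequently, the product Hermitian manifold $(M,Q)$ with $Q=\sum_l Q_l$ is simply connected, complete, reducible Kähler, and the factorization displayed in the statement is precisely its de Rham decomposition, which is unique up to order by Theorem 8.1 of Chapter IX in \cite{KN}.

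Next, I would appeal to Theorem \ref{mtha}: under exactly these hypotheses, for every $t\in[0,+\infty)$ and every integer $k\geq 2$, the metric $F_{t,k}$ defined by \eqref{ftk} is a strongly convex complete Kähler--Berwald metric on $M$, since $(M,Q)$ is a reducible Kähler manifold. Because $(\pmb{\pi}_l)_*v$ in the general formula \eqref{ftk} is identified with $v_l$ in the local affine coordinates on $\mathbb{CP}^{m_l}$, the concrete expression for $F_{t,k}$ given in the statement is exactly the instance of \eqref{ftk} for this choice of factors. This takes care of assertion (1).

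For the curvature bound I would use Theorem \ref{mthb}(1). The only input needed is that each Fubini--Study metric $Q_l$ has constant holomorphic sectional curvature equal to $+4$. This is a standard calculation from the Kähler potential $\log\bigl(1+\sum_{\alpha_l}|z_l^{\alpha_l}|^2\bigr)$, completely parallel to the one that shows the Poincaré--Bergman form $Q_l$ in Proposition \ref{prb} has constant holomorphic sectional curvature $-4$ (the sign flip in the potential flips the sign of the curvature, the magnitude being fixed by the same normalization used in Proposition \ref{prb}). Granted this, Theorem \ref{mthb}(1) applied with $c=4$ and with the same number $n$ of factors yields
\[
\frac{4(1+t)}{n+t\sqrt[k]{n}}\;\leq\; K_{t,k}(z,v)\;\leq\; 4,\qquad \forall\,(z,v)\in\widetilde{M},
\]
which is precisely the claimed interval.

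The only step that is not an immediate citation is the verification that $Q_l$, as written in the affine chart, has constant holomorphic sectional curvature $4$; I expect this to be the only real calculation, and I would handle it exactly as in the parallel discussion for $-4$ in the Poincaré--Bergman case, so I anticipate no genuine obstacle. Everything else is a bookkeeping application of the two main theorems of Section 3.
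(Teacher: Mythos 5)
Your proposal is correct and follows essentially the same route as the paper, which likewise disposes of the proposition by noting that each Fubini--Study factor is a complete K\"ahler metric of constant holomorphic sectional curvature $4$ and then citing Theorem \ref{mtha} and Theorem \ref{mthb}. The extra care you take in checking the de Rham hypotheses and flagging the curvature normalization is sound but does not change the argument.
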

\begin{proof}
Note that by assumption, $Q_l$ is a K\"ahler metric on  $M_l=\mathbb{CP}^{m_l}$ with holomorphic sectional curvature $4$ for $l=1,\cdots,n$ and $M$ is the product manifold of $M_1,\cdots,M_n$. Thus by Theorem \ref{mtha}, Theorem \ref{mthb}, the assertion follows.
\end{proof}

The following theorem is a refinement of the Theorem 5.25 in \cite{Deng}.

\begin{theorem}\label{thm-hsm}
Let $M_l=G_l/H_l$ be Hermitian symmetric spaces endowed with $G_l$-invariant Hermitian metrics $Q_l$ for $l=1,\cdots,n$ and $M=(G_1/H_1)\times \cdots\times (G_n/H_n)$ the product manifold. For any $t\in[0,+\infty)$ and integer $k\geq 2$, define
\begin{equation}
F_{t,k}(z,v)=\frac{1}{\sqrt{1+t}}\sqrt{\sum_{l=1}^nQ_l(\pmb{\pi}_l(z),(\pmb{\pi}_l)_\ast(v))+t\sqrt[k]{\sum_{l=1}^nQ_l^k(\pmb{\pi}_l(z),(\pmb{\pi}_l)_\ast(v))}},\quad \forall (z,v)\in T^{1,0}M.\label{hsm}
\end{equation}
 Then

(1) $F_{t,k}$ is a strongly convex K\"ahler-Berwald metric $F_{t,k}$ on $M$;

(2) $F_{t,k}$ is invariant under $G_1\times\cdots\times G_n$ and makes $(M,F_{t,k})$ a symmetric complex Finsler space;

(3) if $K_l\equiv c\geq 0$, then $K_{t,k}\in\left[\frac{(1+t)c}{n+t\sqrt[k]{n}},c\right]$;

(4) if $K_l\equiv c<0$, then $K_{t,k}\in \left[c,\frac{(1+t)c}{n+t\sqrt[k]{n}}\right]$.
\end{theorem}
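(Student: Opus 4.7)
The plan is to reduce each of the four assertions to results already established in this paper: (1) to Theorem \ref{mth-1}, (2) to the $G_l$-invariance of each $Q_l$ combined with the existence of Cartan symmetries on each factor, and (3)-(4) to the closed-form curvature computation carried out in the proof of Theorem \ref{mthb}, supplemented by two sharp elementary inequalities.

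For (1), note that each Hermitian symmetric space $M_l=G_l/H_l$ is simply connected, complete, and K\"ahler, so $(M,Q_1\oplus\cdots\oplus Q_n)$ is a simply connected, complete, reducible K\"ahler manifold. Although the individual factors $M_l$ need not be de Rham irreducible, inspection of the proof of Theorem \ref{mth-1} shows that none of its five steps (strong convexity, real Berwald, complex Berwald, K\"ahler-Berwald, completeness) actually uses irreducibility: each argument proceeds from nothing more than a product of K\"ahler manifolds together with the concrete form of $F_{t,k}$. Applying Theorem \ref{mth-1} with the given coarser factorization therefore yields (1) at once.

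For (2), the invariance of $F_{t,k}$ under $G_1\times\cdots\times G_n$ (acting componentwise) is immediate from the defining formula: for $g=(g_1,\ldots,g_n)$ one has $\pmb{\pi}_l(g\cdot z)=g_l\cdot\pmb{\pi}_l(z)$ and $(\pmb{\pi}_l)_\ast\circ g_\ast=(g_l)_\ast\circ(\pmb{\pi}_l)_\ast$, so each summand $Q_l(\pmb{\pi}_l(g\cdot z),(\pmb{\pi}_l)_\ast(g_\ast v))$ collapses to $Q_l(\pmb{\pi}_l(z),(\pmb{\pi}_l)_\ast v)$ by $G_l$-invariance of $Q_l$. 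To exhibit the symmetric complex Finsler structure at an arbitrary $p=(p_1,\ldots,p_n)\in M$, take the Cartan geodesic symmetry $\sigma_l\in G_l$ of $(M_l,Q_l)$ at $p_l$ and set $\sigma_p:=(\sigma_1,\ldots,\sigma_n)$; this is a holomorphic involution of $M$ with $p$ as an isolated fixed point, and the same componentwise argument shows $\sigma_p^\ast F_{t,k}=F_{t,k}$, so $(M,F_{t,k})$ is a symmetric complex Finsler space in the sense of \cite{Deng}.

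For (3) and (4), I will appeal to the closed form derived in the proof of Theorem \ref{mthb}: under $K_l\equiv c$ for every $l$,
\[
K_{t,k}(z,v)=c(1+t)\cdot\frac{\sum_{l=1}^{n}Q_l^{2}+t\,\mathcal{A}^{\frac{1}{k}-1}\sum_{l=1}^{n}Q_l^{k+1}}{\bigl[\sum_{l=1}^{n}Q_l+t\,\mathcal{A}^{\frac{1}{k}}\bigr]^{2}},\qquad \mathcal{A}=\sum_{l=1}^{n}Q_l^{k}.
\]
The fraction on the right lies in $\bigl[\tfrac{1}{n+t\sqrt[k]{n}},\tfrac{1}{1+t}\bigr]$ on all of $\widetilde{M}$: the upper bound is the nonnegativity estimate already in Theorem \ref{mthb}, and the lower bound follows from Cauchy-Schwarz $n\sum Q_l^2\geq(\sum Q_l)^2$ together with the power-mean inequality $n^{1/k}\sum Q_l^{k+1}\geq\mathcal{A}^{(k+1)/k}$. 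Multiplying by the factor $c(1+t)$ gives the two-sided bound in (3) when $c\geq 0$, and, reversing orientations because of the negative factor, the sandwich in (4) when $c<0$. The main obstacle throughout is localized in part (2): one has to check that a product of Riemannian geodesic symmetries really preserves the non-Hermitian quadratic Finsler metric $F_{t,k}$, a point on which the usual Riemannian/Hermitian intuition gives no guarantee. This is resolved by the observation that each $\sigma_l$ preserves the full function $Q_l$, and $F_{t,k}$ is a symmetric functional of $(Q_1,\ldots,Q_n)$; invariance therefore transfers automatically, without any further smoothness or curvature input.
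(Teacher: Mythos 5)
Your proposal is correct and follows essentially the same route as the paper: reduce assertion (1) to Theorem \ref{mtha} (whose computational steps, as you note, never actually use irreducibility of the factors), reduce (3)--(4) to the closed-form curvature expression \eqref{gc} from Theorem \ref{mthb}, and verify invariance componentwise. You in fact supply details the paper leaves implicit --- the explicit product-of-Cartan-symmetries argument for the symmetric-space structure in (2), and the Cauchy--Schwarz and power-mean inequalities behind the two-sided curvature bound (only note that the coefficient of $t$ in the lower-bound inequality additionally requires an AM--GM combination of those same two estimates, which is routine).
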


\begin{proof}
Let $o_l$ be the origin of $G_l/H_l$ for $l=1,\cdots,n$ and $o=(o_1,\cdots,o_n)$ the origin of the product manifold $M=(G_1/H_1)\times \cdots\times (G_n/H_n)$.
Let $z=(z_1,\cdots, z_n)\in M,v=(v_1,\cdots,v_n)\in T_z^{1,0}M$ such that $\pmb{\pi}_l(z)=z_l=(z_l^1,\cdots, z_l^{m_l})\in G_l/H_l$ and  $(\pmb{\pi}_l)_\ast(v)=v_l=(v_l^1,\cdots, v_l^{m_l})\in T_{z_l}^{1,0}(G_l/H_l)$ for $l=1,\cdots,n$. Then $F_{t,k}$ defined by \eqref{hsm} is invariant under $H_1\times \cdots\times H_n$, hence by Theorem \ref{mtha} and Theorem \ref{mthb}, the assertion follows.
\end{proof}

\section{de Rahm's decomposition theory of strongly convex K\"ahler-Berwald spaces}

It is well-known that a connected, simply connected and complete Riemannian manifold $M$ is isometric to the direct product $M_0\times M_1\times \cdots\times M_n$, where $M_0$ is a Euclidean space (possibly of dimension $0$) and $M_1, \cdots, M_n$ are all simply connected, complete, irreducible Riemannian manifolds. Such a decomposition is unique up to an order. This theorem is called the de Rham decomposition theorem which is due to G. de Rham \cite{Rham} (cf. also Theorem 6.2 in Chapter IV of volume I \cite{KN}). Few years after de Rham's result, Hano and Matsushima obtained following theorem (cf. also Theorem 8.1 in Chapter IX of volume II \cite{KN}).
 \begin{theorem}[\cite{HM}]\label{kd}
 A simply connected, complete K\"ahler manifold $M$ is holomorphic isometric to the direct product $M_0\times M_1\times \cdots\times M_k$, where $M_0$ is a complex Euclidean space and $M_1,\cdots,M_k$ are all simply connected, complete, irreducible K\"ahler manifolds.
 \end{theorem}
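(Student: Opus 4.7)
The plan is to combine the classical de Rham decomposition theorem for Riemannian manifolds with a representation-theoretic analysis of how the complex structure $J$ interacts with this decomposition. First, I would apply the Riemannian de Rham decomposition (Theorem 6.2 of Chapter IV in volume I of \cite{KN}) to write $M = N_0 \times N_1 \times \cdots \times N_\ell$, where $N_0$ is a Euclidean space (possibly of dimension zero) and $N_1, \ldots, N_\ell$ are all simply connected, complete, and irreducible as Riemannian manifolds. At a base point $p \in M$ this corresponds to an $H^0_p$-invariant decomposition $T_p M = V_0 \oplus V_1 \oplus \cdots \oplus V_\ell$, where $V_0$ is the maximal trivial submodule of the restricted holonomy group $H^0_p$ and each $V_i$ with $i \geq 1$ is nontrivial and irreducible.

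Since $M$ is K\"ahler, the complex structure $J$ is parallel and hence commutes with every element of $H^0_p$. Therefore $J(V_i)$ is again an $H^0_p$-invariant subspace of the same dimension as $V_i$, and a Schur-type argument forces $J$ to permute the irreducible pieces. Using $J^2 = -\mathrm{id}$, one sees that $J$ either fixes $V_i$ or exchanges it with some $V_j$ in an involutive pairing. Moreover, since the $H^0_p$-action on $V_0$ is trivial while on each $V_i$ with $i \geq 1$ it is not, $J$ must send $V_0$ into itself; consequently $V_0$ is $J$-invariant and $N_0 \cong \mathbb{C}^{m_0}$ becomes a complex Euclidean space with $2m_0 = \dim_{\mathbb{R}} V_0$. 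For the non-flat factors I would regroup according to the $J$-orbits: each fixed $V_i$ endows $N_i$ with an induced complex structure making it a simply connected, complete, irreducible K\"ahler manifold, and each swapped pair $(V_i, V_j)$ produces a K\"ahler factor $N_i \times N_j$ with complex structure $J|_{V_i \oplus V_j}$ which is irreducible \emph{as a K\"ahler manifold} even though it is reducible as a Riemannian one (the only proper subspaces of $V_i \oplus V_j$ simultaneously invariant under $H^0_p$ and $J$ are the trivial ones). Parallelism of the decomposition together with the simple connectedness of $M$ transports this pointwise splitting to a global holomorphic isometric decomposition; uniqueness follows from that of the Riemannian de Rham decomposition.

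The main technical obstacle is to verify carefully that $J$ genuinely acts as a permutation on the irreducible factors rather than producing some less tractable mixing, and that the regrouping of pair-swapped factors really yields factors that are irreducible in the K\"ahler sense. This hinges on the fact that the holonomy of a K\"ahler manifold lies in $U(n) \subset \mathrm{SO}(2n)$, so the finest invariant decomposition as a $U(n)$-module (which is what a K\"ahler factorization records) can be strictly coarser than the finest $\mathrm{SO}(2n)$-invariant decomposition (which is what the Riemannian theorem records).
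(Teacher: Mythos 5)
First, note that the paper does not prove this statement: it is quoted as a classical result of Hano--Matsushima \cite{HM} (see also Theorem 8.1 of Chapter IX in volume II of \cite{KN}), so your argument can only be compared with the standard proof, whose overall strategy (Riemannian de Rham decomposition plus parallelism of $J$ and commutation with holonomy) you do follow. There is, however, a genuine gap: you leave open the possibility that $J$ swaps two non-flat irreducible factors $V_i \leftrightarrow V_j$ and then try to repair this by regrouping each swapped pair into a single ``K\"ahler-irreducible'' block $N_i\times N_j$. This repair does not yield the theorem as stated. In the de Rham setting, and in the way this paper uses the result (e.g.\ in Theorem \ref{mth-1}, where the $Q_l$ live on the irreducible Riemannian de Rham factors), an irreducible K\"ahler manifold means one whose restricted holonomy acts irreducibly on the tangent space; your block $N_i\times N_j$ would be Riemannian-reducible, and your parenthetical claim that the only subspaces of $V_i\oplus V_j$ invariant under both $H^0_p$ and $J$ are the trivial ones is precisely what would need proof and is not supplied.

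The missing idea is that the swap case is vacuous, and ruling it out is the real content of the Hano--Matsushima argument. The de Rham theorem gives more than the splitting $T_pM=V_0\oplus V_1\oplus\cdots\oplus V_\ell$: the restricted holonomy group decomposes as a direct product $H^0_p=\Psi_0\times\Psi_1\times\cdots\times\Psi_\ell$, where for $i\ge 1$ the factor $\Psi_i$ acts irreducibly and nontrivially on $V_i$ and trivially on every $V_j$ with $j\neq i$. Suppose $J(V_i)=V_j$ with $i\neq j$ and $i,j\ge 1$. For $a\in\Psi_i$ and $v\in V_i$ one has $J(av)=a(Jv)=Jv$, because $a$ acts trivially on $V_j\ni Jv$; since $J$ is injective this forces $av=v$ for all $a\in\Psi_i$, so $\Psi_i$ acts trivially on $V_i$, a contradiction. (The same direct-product structure is also what justifies your earlier ``Schur-type'' assertion that $J(V_i)$ must coincide with one of the $V_j$ rather than with a diagonal irreducible subspace inside an isotypic component; plain Schur theory for a single group action would not exclude that.) Hence every $V_i$ is $J$-invariant, each Riemannian factor inherits a parallel complex structure and is itself a simply connected, complete, irreducible K\"ahler manifold, and the remainder of your outline (globalization of the pointwise splitting via parallel transport, completeness and simple connectedness of the factors, holomorphy of the isometry) goes through as you describe.
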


%For Hermitian symmetric manifolds, it was shown in \cite{Hel} that
%\begin{theorem}[\cite{Hel}]\label{hsd}
% Every Hermitian symmetric manifold $M$ decomposes uniquely as
%$$
%M=M_0\times M_{-}\times M_{+},
%$$
%where $M_0$ is a Euclidean \textbf{HSM}, $M_{-}$ is a \textbf{HSM} of compact type and $M_{+}$ is a \textbf{HSM} of non-compact type. Moreover, $M_{-}$ (resp. $M_{+}$) is simply connected and it decomposes uniquely as a product %of compact (resp. non-compact) irreducible \textbf{HSMs}.
%\end{theorem}
 The de Rham decomposition theorem for Riemannian manifolds was generalized to Finsler spaces (manifolds endowed with metrics which are not necessary quadratic). In \cite{Szabo}, Szab\'{o} proved that besides the Riemannian and locally real Minkowski spaces, there are $54$ non-Riemannian real Berwald spaces such that all other real Berwald spaces can be represented locally as a Descartes product of these spaces, thus obtained a de Rham type theorem for real Berwald spaces.
%More precisely he obtained the following theorem.
%\begin{theorem}[\cite{Szabo}]\label{Sza}
%A connected real Berwald space $(M,F)$ must be one of the following four types:

%(1) $(M,F)$ is a Riemannian space.

%(2) $(M,F)$ is a locally real Minkowskian space.

%(3) $(M,F)$ is a locally irreducible and locally symmetric non-Riemannian real Berwald space of rank $r\geq 2$.

%(4) $(M,F)$ is locally reducible, and can be locally decomposed into a Descartes product of Riemannian spaces, locally real Minkowskian spaces, and locally
%irreducible, locally symmetric non-Riemannian real Berwald spaces of rank $r\geq 2$.
%\end{theorem}
%\begin{theorem}[\cite{Szabo}]\label{Sz}
%A connected simply connected complete Berwald space $(M,F)$ can be decomposed into the Descartes product of a Minkowski space, simply connected complete irreducible Riemannian spaces and simply connected complete irreducible globally symmetric non-Riemannian Berwald spaces of rank $\geq 2$. Such a decomposition is unique up to an order.
%\end{theorem}

\begin{remark}
A strongly convex complex Berwald metric is not necessary a real Berwald metric. For example, for each strongly convex complex Minkowski
metric $f(v)$ on $\mathbb{C}^n$, the conformal change $F(z,v):=e^{\sigma(z)}f(v)$ of $f(v)$ is a strongly convex complex Berwald metric on $\mathbb{C}^n$.
It is a real Berwald metric on $\mathbb{C}^n$ if only if $\sigma(z)=\mbox{constant}$. On the other hand, a real Berwald metric on
a complex manifold $M$ is not necessary reversible or compatible with the complex structure $J$ on $M$, thus not
necessary a complex Berwald metric.
\end{remark}

In section 3, it is proved that a reducible Hermitian manifold (resp. K\"ahler manifold) $(M,Q)$  always admits non-Hermitian complex Berwald metric (resp. K\"ahler-Berwald metric) $F_{t,k}$ such that
$(M,F_{t,k})$ is a non-Hermitian complex Berwald manifold (resp. K\"ahler-Berwald manifold). In this section, we show that each simply connected complete
 K\"ahler-Berwald manifold $(M,F)$ admits a de Rham type decomposition, that is $(M,F)$ is  actually a product of K\"ahler-Berwald manifolds $(M_1,F_1),\cdots,(M_n,F_n)$.

For this purpose, we introduce the notion of product of complex Berwald manifolds, which is a generalization of the usual product of Hermitian manifolds.
Suppose that $(M_l,F_l)$ are complex Berwald manifolds for $l=1,\cdots,n$. Then by Proposition 5.1 in \cite{Aikou1} or Proposition 4.3 in \cite{Aikou2}, there are Hermitian metrics $Q_l$ on $M_l$ for $l=1,\cdots,n$
such that the pull-back of the Hermitian connection $\nabla^{(l)}$ associated to $Q_l$ coincide with the horizontal Chern-Finsler connection associated to $F_l$
 for $l=1,\cdots,n$. Define a complex linear connection
$$\nabla:=\nabla^{(1)}\times\cdots\times \nabla^{(n)}$$
 on $M=M_1\times \cdots\times M_n$. By Theorem \ref{mtha}, there  exists a non-Hermitian quadratic complex Berwald metric $F_{t,k}$ on $M$
such the pull-back of $\nabla$ coincides with the horizontal Chern-Finsler connection associated to $F_{t,k}$, namely
\begin{eqnarray*}
\nabla_{(\pmb{\pi}_l)^\ast X}(\pmb{\pi}_l)^\ast Y=(\pmb{\pi}_l)^\ast(\nabla_X^{(l)}Y)\quad \mbox{and}\quad
\nabla_{(\pmb{\pi}_l)^\ast X}(\pmb{\pi}_i)^\ast Z=0,
\end{eqnarray*}
where $X$ and $Y$ are vector fields of type $(1,0)$ on $M_l$  and $Z$ is a vector field of type $(1,0)$ on $M_i$ with $l\neq i$.

\begin{definition}\label{pb}
A complex Berwald manifold $(M,F)$ is said to be the Descartes product of the complex Berwald manifolds $(M_1,F_1),\cdots,(M_n,F_n)$, if

(1) $M=M_1\times\cdots\times M_n$;

(2) $\nabla=\nabla^{(1)}\times\cdots\times \nabla^{(n)}$ is the horizontal Chern-Finsler connection associated to $F$ and $\nabla^{(l)}$ are the complex Berwald connections associated to $F_l$ for $l=1,\cdots,n$;

(3) $F(p,(\pmb{\pi}_l)^\ast X)=F_l(\pmb{\pi}_l(p), X)$ for any $p\in M, X\in T_{\pmb{\pi}_l(p)}^{1,0}(M_l)$ and $l=1,\cdots,n$.

\end{definition}
\begin{remark}
If $(M_l,Q_l),l=1,\cdots,n$ are  Hermitian manifolds and $F$ is the usual product metric of $Q_1,\cdots,Q_n$ on $M$,
then $(M,F)$ obviously satisfies Definition \ref{pb}. If $(M_l,F_l),l=1,\cdots,n$, are non-Hermitian complex Berwald manifolds, then by Proposition 5.1 in \cite{Aikou1} or Proposition 4.3 in \cite{Aikou2}, there are
Hermitian metrics $Q_l$ on $M_l$ such that the pull-back of the Hermitian connection associated to $Q_l$
coincides with the horizontal Chern-Finsler connection associated to $F_l$, taking $F=F_{t,k}$ as defined in Theorem \ref{mtha}, one can check that $(M,F_{t,k})$ also satisfies Definition \ref{pb}.
\end{remark}

%Note that the Berwald spaces in Theorem \ref{Sza} and Theorem \ref{Sz} are real Berwald spaces.
%In general on a complex manifold $M$, the real Berwald connection associated to a strongly convex complex Berwald metric $F$
%does not coincide with the complex Berwald connection associated to $F$. Nevertheless, we have the following theorem.

\begin{theorem}\label{mth-c}
A connected, simply connected, strongly convex K\"ahler-Berwald space $(M,F)$ must be one of the following four types:

1) $(M,F)$ is a Hermitian space.

2) $(M,F)$ is a locally complex Minkowski space.

3) $(M,F)$ is a locally irreducible and locally symmetric non-Hermitian K\"ahler-Berwald space of rank $r\geq 2$.

4) $(M,F)$ is locally reducible, and in this case $(M,F)$ can be locally decomposed into a Descartes product of Hermitian spaces,
locally complex Minkowski spaces and locally irreducible symmetric non-Hermitian K\"ahler-Berwald spaces of rank $r\geq 2$.
\end{theorem}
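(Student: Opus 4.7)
The plan is to combine Aikou's characterization of K\"ahler-Berwald metrics with the local de Rham decomposition of the associated K\"ahler metric and the Berger-Simons holonomy theorem. By the result of Aikou recalled in the introduction, a strongly convex K\"ahler-Berwald metric $F$ on $M$ is accompanied by a canonical K\"ahler metric $h_M$ on $M$ whose Levi-Civita / Hermitian connection, when pulled back to $T^{1,0}M$, coincides with the horizontal Chern-Finsler connection of $F$. Since the Chern-Finsler connection always preserves $G=F^2$, parallel transport with respect to $h_M$ is automatically a linear isometry of the Minkowski norms $F(p,\cdot)$ between tangent spaces. This reduces the study of the Finsler structure to the study of $h_M$-holonomy acting on a parallel family of strongly convex centrally symmetric indicatrices.

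First I would apply the local de Rham decomposition for K\"ahler manifolds (local form of Theorem 8.1, Ch. IX, Vol. II of \cite{KN}) to $(M,h_M)$. At any point $p\in M$ the restricted holonomy group $\mathcal{H}_p$ preserves an $h_M$-orthogonal decomposition $T_p^{1,0}M = V_0\oplus V_1\oplus\cdots\oplus V_k$, where $\mathcal{H}_p$ acts trivially on the flat part $V_0$ and irreducibly on each $V_i$ for $i\geq 1$; this splitting integrates to a local holomorphic isometry of a neighborhood of $p$ onto a product $U_0\times U_1\times\cdots\times U_k$ of K\"ahler manifolds. Next I would check that $(M,F)$ is a Descartes product in the sense of Definition \ref{pb} of the pieces $(U_i,F_i)$, where $F_i(p_i,X):=F(p,X)$ for $X\in T_{p_i}^{1,0}U_i$. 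Requirements (1) and (2) of Definition \ref{pb} are automatic from the splitting since the Chern-Finsler connection of $F$ equals the product connection of $h_M$; requirement (3) follows because $h_M$-parallel transport along an $U_j$-factor ($j\neq i$) preserves both the sub-bundle $T^{1,0}U_i$ and $F$, so $F_i$ is well-defined and parallel.

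The heart of the proof is the classification of each irreducible factor. On the flat factor $(U_0,F_0)$, the holonomy is trivial, so $F_0$ is constant under parallel transport and $(U_0,F_0)$ is locally a complex Minkowski space. For each irreducible factor $(U_i,F_i)$, $i\geq 1$, I would invoke the Berger-Simons theorem (in its K\"ahler form, which is local by the Ambrose-Singer description of holonomy through curvature) to obtain a dichotomy: either $\mathcal{H}_{p_i}$ acts transitively on the $h_M$-unit sphere of $V_i$ --- in which case the $\mathcal{H}_{p_i}$-invariant strongly convex centrally symmetric indicatrix $I_{F_i}(p_i)$ must be a ball, forcing $F_i$ to be Hermitian --- or $(U_i,h_M|_{U_i})$ is locally Hermitian symmetric. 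In the symmetric case, since every rank-one irreducible Hermitian symmetric space (the complex projective and hyperbolic spaces and their duals) has holonomy $U(n)$ or $U(n-1)$ acting transitively on the unit sphere, non-transitivity forces the rank to be $\geq 2$, and $(U_i,F_i)$ is then a locally irreducible and locally symmetric non-Hermitian K\"ahler-Berwald space of rank $r\geq 2$. Assembling, according to whether $U_0$ is present and whether each irreducible factor falls in case (a) or (b), $(M,F)$ is one of the types 1)-4).

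The main obstacle will be the rigidity step in case (a): showing that a strongly convex centrally symmetric body in a Euclidean space invariant under a closed subgroup acting transitively on the unit sphere is necessarily an ellipsoid. This reduces to observing that the Minkowski radial function of the body is a continuous positive function on the unit sphere invariant under a transitive group, hence constant, so the body is a round ball of the inner product (up to a positive scalar). The second subtlety is to carry through both the de Rham decomposition and the Berger-Simons alternative in the local (non-complete) setting; this is feasible because both results admit local versions that depend only on the curvature tensor and the holonomy representation at a point. The Berger-Simons input, which is the most non-elementary ingredient, would be invoked as a cited theorem from Riemannian holonomy theory.
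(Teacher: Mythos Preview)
Your argument is correct and follows the same overall architecture as the paper --- pass to the associated K\"ahler metric via Aikou, decompose via de Rham, and then classify the irreducible factors through holonomy --- but the key step is handled differently. The paper does not invoke Berger--Simons directly; instead it uses Theorems~1.2 and~1.4 of \cite{Zh1} to show that a strongly convex K\"ahler--Berwald metric is automatically a \emph{real} Berwald metric whose real and complex Berwald connections coincide, and then cites Szab\'o's classification (Theorem~3 of \cite{Szabo}) for real Berwald spaces as a black box. In effect, you have unpacked Szab\'o's proof in the K\"ahler setting: the transitivity-on-spheres dichotomy you derive from Berger--Simons is exactly the mechanism underlying Szab\'o's theorem. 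Your route is more self-contained and stays entirely in the complex category, at the cost of rederiving what the paper imports; the paper's route is shorter but depends on the bridge result of \cite{Zh1} to transfer the problem to the real Berwald category. One minor correction: the isotropy/holonomy of the rank-one irreducible Hermitian symmetric spaces $\mathbb{CP}^n$ and $\mathbb{CH}^n$ is $U(n)$, not $U(n-1)$; this does not affect your conclusion since $U(n)$ is transitive on $S^{2n-1}$.
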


\begin{proof}
On one hand, by our assumption and Proposition 4.4 in \cite{Aikou2}, $M$ is necessary a connected, simply connected K\"ahler manifold, namely there
exists a K\"ahler metric $Q$ on $M$. Thus by Theorem \ref{kd},
$(M,Q)$ is holomorphic isometric to the direct product $M_0\times M_1\times \cdots\times M_n$, where $M_0$ is a complex Euclidean space and $M_1,\cdots,M_k$ are all simply connected, irreducible K\"ahler manifolds. If furthermore $(M,Q)$ is a simply connected Hermitian symmetric space, then by Proposition 4.4 in \cite{Hel}, $M$ is a product $M_0\times M_{-}\times M_{+},$ where $M_0$ is a Euclidean Hermitian symmetric space, $M_{-}$ is a Hermitian symmetric space of non-compact type and $M_{+}$ is a Hermitian symmetric space of compact type. Moreover, $M_{-}$ (resp. $M_{+}$) is simply connected and it decomposes uniquely as a product of non-compact (resp. compact) irreducible Hermitian symmetric spaces.

On the other hand,  by our assumption $(M,F)$ and Theorem 1.2 and Theorem 1.4 in \cite{Zh1}, it follows  that $F$
is also a real Berwald metric on $M$, the
real Berwald connection and complex Berwald connection associated to $F$ coincide and are equal to the horizontal Chern-Finsler connection associated to $F$. Thus $(M,F)$ satisfies the assumption of Theorem 3 in \cite{Szabo}, and the assertions of Theorem 3 in \cite{Szabo} hold automatically for $(M,F)$, which implies the assertions of Theorem \ref{mth-c}.

\end{proof}

\begin{theorem}\label{mth-d}
A connected, simply connected, complete, strongly convex K\"ahler-Berwald space $(M,F)$
can be decomposed into the Descartes product of a complex Minkowski space,
simply connected complete irreducible K\"ahler spaces and simply connected complete irreducible globally
symmetric non-Hermitian K\"ahler-Berwald space of rank $r\geq 2$. Such a decomposition is unique up to an order.
\end{theorem}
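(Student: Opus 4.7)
The plan is to combine the local structure Theorem \ref{mth-c} with the Hano--Matsushima decomposition of the underlying K\"ahler base. The bridge is that the horizontal Chern--Finsler connection of a K\"ahler--Berwald metric coincides with the pull-back of the Hermitian connection of an auxiliary K\"ahler metric on $M$, which allows one to transfer decomposition statements between Hermitian geometry and Finsler geometry. Concretely, by Proposition~4.4 in \cite{Aikou2} there exists a K\"ahler metric $Q$ on $M$ whose Hermitian connection $\nabla^Q$ pulls back to the horizontal Chern--Finsler connection of $F$; by Theorems~1.2 and 1.4 in \cite{Zh1}, $F$ is simultaneously a real Berwald metric whose real Cartan, complex Berwald and Chern--Finsler connections all coincide, and this identification lets the completeness of $F$ be transferred to completeness of $Q$.

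Applying the Hano--Matsushima theorem (Theorem \ref{kd}) to the simply connected, complete K\"ahler manifold $(M,Q)$ then yields a holomorphic isometric splitting
$$
(M,Q)\cong (M_0,Q_0)\times (M_1,Q_1)\times\cdots\times (M_k,Q_k),
$$
where $(M_0,Q_0)$ is a complex Euclidean factor (possibly trivial) and each $(M_l,Q_l)$, $1\le l\le k$, is simply connected, complete and irreducible K\"ahler. I would next lift this splitting to the Finsler side: since $\nabla^Q$ pulls back to the horizontal Chern--Finsler connection of $F$, the parallel distributions and holonomy decomposition of $F$ coincide with those of $Q$, so the natural projections $\pmb{\pi}_l$ and embeddings $\pmb{i}_l$ from Subsection~3.1 are holomorphic isometries for the induced metrics $F_l:=F\circ (\pmb{i}_l)_\ast$ on $M_l$. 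Each $(M_l,F_l)$ is thus itself a simply connected, complete, strongly convex K\"ahler--Berwald space whose Chern--Finsler connection pulls back from $\nabla^{Q_l}$, and $F$ recovers as the Descartes product of $F_0,F_1,\ldots,F_k$ in the sense of Definition~\ref{pb}.

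It remains to classify each factor using Theorem~\ref{mth-c}. The factor on the complex Euclidean base $(M_0,Q_0)$ is forced to be a complex Minkowski space, since its Chern--Finsler connection is flat and $M_0$ is simply connected and complete, so the indicatrix is constant along parallel transport. Any $(M_l,F_l)$ with $l\ge 1$ that is Hermitian quadratic must coincide with $(M_l,Q_l)$ and is therefore a simply connected complete irreducible Hermitian space, while any remaining non-Hermitian irreducible factor is locally symmetric of rank $\ge 2$ by Theorem~\ref{mth-c}; simple connectivity and completeness then upgrade ``locally symmetric'' to ``globally symmetric'' via the standard monodromy/transvection argument for symmetric spaces. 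Uniqueness up to order follows from the uniqueness in the Hano--Matsushima decomposition together with the intrinsic characterisation of the type of each factor (complex Minkowskian, irreducible Hermitian, or irreducible non-Hermitian symmetric of rank $\ge 2$) supplied by Theorem~\ref{mth-c}.

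The main technical obstacle I anticipate is the lifting step: showing that the Hano--Matsushima splitting of $(M,Q)$ is actually compatible with the Finsler metric $F$, so that $F$ restricts to a complex Finsler metric on each factor and reassembles as a Descartes product. This requires exploiting the coincidence of the Chern--Finsler connection of $F$ with the Hermitian connection of $Q$ to prove that $F$ is invariant under parallel transport along each de~Rham factor, and in particular that the value of $F$ on a tangent vector depends only on its components in the separate factors---a step where the non-quadratic character of $F$ makes the argument more delicate than in the classical Hermitian setting.
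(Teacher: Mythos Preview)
Your proposal is correct and follows essentially the same approach as the paper: invoke Proposition~4.4 in \cite{Aikou2} and Theorems~1.2, 1.4 in \cite{Zh1} to obtain a complete K\"ahler metric $Q$ sharing the Chern--Finsler connection of $F$, apply the Hano--Matsushima decomposition (Theorem~\ref{kd}) to $(M,Q)$, restrict $F$ to each factor, and classify the factors via Theorem~\ref{mth-c} and Definition~\ref{pb}. The paper's own proof is in fact terser than yours on exactly the lifting step you flag as the main obstacle---it simply writes ``denote $(M_i,F_i,\nabla^{(i)})$ the restriction'' and moves on---so your caution there is well placed rather than a divergence from the intended argument.
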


\begin{proof}
By our assumption, Theorem 1.2 and Theorem 1.4 in \cite{Zh1}, it follows that the real Berwald connection and complex Berwald connection associated to $F$ coincide \cite{Zh1} (denoted by $\nabla$), and both of them are complete in the sense that every geodesic $z(t)$ of $F$ (or equivalently $\nabla$) can be extended to a geodesic defined for all $t\in(-\infty,+\infty)$. By our assumption and Proposition 4.4 in \cite{Aikou2}, there exists a complete K\"ahler metric $Q$ on $M$ which is $J$ invariant and the pull-back Hermitian connection associated to $Q$ coincides with the horizontal Chern-Finsler connection $\nabla$ associated to $F$, thus both of them have the same systems of geodesic equation. Therefore $(M,Q)$ is a connected, simply connected, complete K\"ahler manifold. Thus by Theorem \ref{kd}, $(M,Q)$ is holomorphic isometric to the direct product $M_0\times M_1\times\cdots\times M_k$, where $M_0$ is a complex Euclidean space and $M_i,i=1,\cdots,k$ are connected, simply connected, complete and irreducible K\"ahler manifolds. Now denote $(M_0, F_i,\nabla^{(i)})$ the restriction of the strongly convex K\"ahler-Berwald space $(M,F,\nabla)$ onto $M_i,i=0,1,\cdots,k$. Then $(M_0,F_0)$ is clear a complex Minkowski space, and for $i\geq 1$, $(M_i,F_i)$ is either an irreducible K\"ahler space, or an irreducible connected, complete, and globally symmetric non-Hermitian K\"ahler-Berwald space of rank $r\geq 2$. Thus by Theorem \ref{mth-c} and Definition \ref{pb}, $(M,F)$  is the Descartes product of these K\"ahler-Berwald spaces.
\end{proof}

\begin{remark}
The assumption that $F$ is a strongly convex  K\"ahler-Berwald metric is necessary for Theorem \ref{mth-c} and Theorem \ref{mth-d}, this has already been seen in Hermitian quadratic case (Theorem \ref{kd}).
\end{remark}
\vskip0.4cm
\noindent
\textbf{Acknowledgement.}\ {\small This work is partially supported by NSFC (Grant No. 12071386, 11671330).

\end{document}